\documentclass[12pt,reqno]{amsart}
\usepackage{amsmath, amsthm, amssymb, stmaryrd}

\usepackage{todonotes} 

\usepackage{hyperref}

\usepackage{cleveref}
\usepackage{bbm}
\usepackage{mathrsfs}
\usepackage{makecell}
\usepackage{geometry} 
\usepackage{xcolor}

\usepackage[backend=biber,style=numeric,sorting=nyt,giveninits=true]{biblatex}
\usepackage{graphicx}

\crefformat{section}{\S#2#1#3} 
\crefformat{subsection}{\S#2#1#3}
\crefformat{subsubsection}{\S#2#1#3}

\newtheorem{theorem}{Theorem}[section]
\newtheorem{lemma}[theorem]{Lemma}

\theoremstyle{definition}
\newtheorem{definition}[theorem]{Definition}

\theoremstyle{remark}

\numberwithin{equation}{section}

\newcommand{\codim}[1]{\operatorname{codim}\left(#1\right)}
\newcommand{\Sing}[1]{\operatorname{Sing}\left(#1\right)}

 \def\bfL{{\mathbf L}}
 \def\bfM{{\mathbf M}}

\def\bfr{{\mathbf r}}

\def\bfu{{\mathbf u}}
\def\bfv{{\mathbf v}}
\def\bfw{{\mathbf w}}
\def\bfx{{\mathbf x}}
\def\bfy{{\mathbf y}}
\def\bfz{{\mathbf z}}

\def\bfX{{\mathbf X}}
\def\bfY{{\mathbf Y}}

\def\calG{{\mathcal G}}

\def\calN{{\mathcal N}}

\def\C{{\mathbb C}}

\def\N{{\mathbb N}}  
\def\R{{\mathbb R}}
\def\Z{{\mathbb Z}}\def\Q{{\mathbb Q}}
\def\A{{\mathbb A}}

\def\grm{{\mathfrak m}}\def\grM{{\mathfrak M}}
\def\grN{{\mathfrak N}}\def\grn{{\mathfrak n}}

\def\grS{{\mathfrak S}}

\def\a{{\alpha}}  
\def\b{{\beta}}  
\def\gam{{\gamma}} 
\def\bfgam{{\boldsymbol \gam}}

\def\bfeta{{\boldsymbol \eta}}

\def\bfrho{{\boldsymbol \rho}}

\def \id{\text{\textbf{Id}}}

\def\eps{\varepsilon}
\def\bfeps{{\boldsymbol \varepsilon}}

\def\le{\leqslant} \def\ge{\geqslant}

\def\d{{\,{\rm d}}}

\begin{document}
\title[On weighted forms in many variables]{On weighted forms in many variables}
\author[Daniel Flores Galiote]{Daniel Flores Galiote}
\author[Kiseok Yeon]{Kiseok Yeon}
\address{Department of Mathematics, Rice University}
\email{df66@rice.edu}
\address{Department of Mathematics, University of California, Davis, United States}
\email{kyeon@ucdavis.edu}
\subjclass[2020]{Primary 11P55, 11D72 ; Secondary 11D45, 14G12, 11L07}
\keywords{weighted forms, circle method, Diophantine equations, exponential sums}
\date{}
\dedicatory{}

\begin{abstract}
    In this paper, we introduce several novel approaches utilizing the circle method to obtain 
    the asymptotic formula for the number of integral points of bounded height lying on a 
    hypersurface in a weighted projective space.
    
    Let $F(\bfx;\bfy)$ be a given weighted form of degree $d$ in variables 
    $\bfx\in \mathbb{R}^{s_1}$ and $\bfy\in \mathbb{R}^{s_2}$, where variables $\bfx$ and $\bfy$ 
    have weights $w_1$ and $w_2$ with $w_1< w_2$, $(w_1,w_2)=1$, and
    $d>w_1w_2$. Write 
    \begin{equation*}
        R_F(P):=\#\{(\bfx;\bfy)\in \mathbb{Z}^{s_1+s_2}:\ F(\bfx;\bfy)=0,\ 
        |\bfx|\leq P^{w_1/d},\ |\bfy|\leq P^{w_2/d}\}.
    \end{equation*}
   In particular, we show that whenever 
   \[
    s_1+s_2-\sigma_F>
    \left(1+\frac{w_2}{w_1}\right)
    \frac d{w_1}2^{d/w_1},
    \]
    where $\sigma_F$ is the dimension of the affine singular locus of $F$, the quantity 
   $R_F(P)$ has the expected asymptotic formula, that is 
   \[R_F(P)=cP^{s_1w_1/d+s_2w_2/d-1}+o(P^{s_1w_1/d+s_2w_2/d-1}),\]
    where $c$ is the product of local densities. Furthermore, the constant $c$ is positive whenever $F(\bfx;\bfy)=0$ has a nonsingular solution over $\mathbb{R}$ and $\Q_p$ for every prime $p$. As a corollary, we verify the integral Hasse principle for the quasi-smooth hypersurface defined by $F(\bfx;\bfy)=0$ in a weighted projective space of sufficiently large dimensions.
\end{abstract}
\maketitle

\section{Introduction and statements of the main theorems}

This paper is concerned with asymptotic formulas for the number of integer
solutions of bounded height to weighted homogeneous polynomial equations in
many variables.  The circle method has proved particularly effective for
studying homogeneous equations when the number of variables is sufficiently
large relative to the degree.  In his celebrated work, Birch
\cite{Birch1962} established an asymptotic formula for the number of
integer solutions of bounded height to an equation $F(\bfx)=0$, where $F$ is a
degree-$d$ form in $s$ variables, under the condition
\[
    s-\sigma>(d-1)2^d,
\]
where $\sigma$ is the dimension of the affine singular locus associated
with $F$.  

Subsequent refinements, often formulated for counting functions
with smooth cutoffs, have improved this variable bound in several settings;
see the work of Heath-Brown \cite{MR703978,MR1421949}, Browning and
Prendiville \cite{Browning2017}, and Marmon and Vishe \cite{Marmon2019}.
Further improvements are possible when the form has additional structure.
For bihomogeneous forms, Schindler \cite{Schindler2014,Schindler2016}
developed a version of the circle method in which the relevant hypotheses
depend on the singular loci associated separately with the two sets of
variables. In the case of additive forms, Wooley
\cite[Theorem 1.3]{Wooley2012} proved that an asymptotic formula holds
provided that
\[
    s>d^2-\left\lceil\frac{\log d}{\log 2}\right\rceil.
\]

We now turn our attention to the non-homogeneous case. The current machinery of 
the circle method remains insufficient to establish an asymptotic formula for the 
number of integer solutions of bounded height to general polynomial equations $F(\bfx)=0$ 
in many variables. Indeed, the general approaches to homogeneous equations of high degrees, 
combined with the circle method, rely on differencing arguments to reduce the degrees, 
but this itself has a fundamental obstacle when handling the lower-degree terms in the 
polynomial. 

It is apparent that we require new machinery to access the non-homogeneous 
cases. Even for structured non-homogeneous equations, with the exception of additive 
forms, the literature is quite sparse compared to the extensively studied homogeneous 
cases. For example, non-homogeneous equations of low degree have been studied by Davenport and Lewis
\cite{DavenportLewis1964}, Watson \cite{Watson1967}, Browning and Heath-Brown
\cite{BrowningHeathBrown2009}, Flores \cite{Flores2024}, and Bernert \cite{Bernert2023}.

In the present work, we develop a version of the circle method specifically
tailored to weighted forms.  We establish an asymptotic formula for the
number of integer solutions to weighted homogeneous equations when the
number of variables is sufficiently large in terms of the weighted degree,
the weights, and the dimension of the singular locus, in the spirit of
Birch's landmark work \cite{Birch1962}.  We also obtain sharper quantitative
bounds when additional geometric information about the bihomogeneous
components is available.  Before stating the main results, we introduce the
relevant definitions and notation.

\begin{definition}\label{def:weightedform}
Let $\bfw=(w_1,\ldots,w_s)\in\N^s$ satisfy
$\gcd(w_1,\ldots,w_s)=1$.  A polynomial
$F\in\Z[x_1,\ldots,x_s]$ is an \emph{integral weighted form} of weighted
degree $d$ with weight vector $\bfw$ if
\[
F(\lambda^{w_1}x_1,\ldots,\lambda^{w_s}x_s)
    =\lambda^dF(x_1,\ldots,x_s),
\]
for every $\lambda\in\C$.  We then say that $x_i$ has weight $w_i$.
\end{definition}

Given an integral weighted form $F$ of degree $d$, our central problem is to
count the integer solutions to the equation
\[
    F(x_1,\ldots,x_s)=0,
\]
subject to the bounds
\[
    |x_i|\leq P^{w_i/d}\qquad(1\leq i\leq s).
\]
More precisely, we seek an asymptotic formula for this counting function as
$P\to\infty$, with a leading constant given by the expected product of
local densities.

In this paper, we restrict attention to weighted forms involving exactly two
distinct weights.  It seems plausible that the methods developed here may
generalize to more general weight vectors, but we do not pursue this
question.  Throughout the paper, let
$w_1,w_2,d,s_1,s_2\in\N$ satisfy
\[
    w_1<w_2,\qquad \gcd(w_1,w_2)=1,
    \qquad d>w_1w_2,
\]
and let $F(\bfx;\bfy)$ be a weighted form of degree $d$, where
$\bfx\in\C^{s_1}$ has weight $w_1$ and $\bfy\in\C^{s_2}$ has weight
$w_2$. The reason for the restriction $d>w_1w_2$ is discussed in
Subsection \ref{subsec:comparison with Birch}.

For $P\geq1$, define
\[
R_F(P):=\#\left\{(\bfx;\bfy)\in\Z^{s_1+s_2}:
\begin{array}{c}
F(\bfx;\bfy)=0,\ 
|\bfx|\leq P^{w_1/d},\ |\bfy|\leq P^{w_2/d}
\end{array}
\right\},
\]
and put
\begin{equation}\label{def of sigmaF}
\sigma_F:=\dim\left\{(\bfx;\bfy)\in\A_{\C}^{s_1+s_2}:
\nabla F(\bfx;\bfy)=\bf0\right\}.
\end{equation}
Our first theorem is intended to express the required number of variables in
terms of $\sigma_F$, $d$, and weights $w_1,w_2$.  To keep its hypothesis
readable, we state here a convenient sufficient bound.  A stronger version
with a more detailed hypothesis is given in Theorem
\ref{thm:weighted-singular-locus-precise}.
\begin{theorem}\label{thm:weighted-singular-locus}
Suppose that $d>w_1w_2$ and
\[
s_1+s_2-\sigma_F>
\left(1+\frac{w_2}{w_1}\right)
\frac d{w_1}2^{d/w_1}.
\]
Then, as $P\to\infty$, one has
\[
R_F(P)=\mathfrak S J\,P^{(s_1w_1+s_2w_2)/d-1}
+o\left(P^{(s_1w_1+s_2w_2)/d-1}\right),
\]
where $\mathfrak S$ and $J$ are the singular series and singular integral
associated with $F$. Moreover, $\mathfrak S J>0$ whenever
$F(\bfx;\bfy)=0$ has a nonsingular real solution and a nonsingular solution
over $\Q_p$ for every prime $p$. In particular, when $\sigma_F=0$, in other words, the
weighted hypersurface $V\subset \mathbb{P}(w_1,\ldots,w_1;w_2,\ldots,w_2)$ defined by $F(\bfx;\bfy)=0$ is quasi-smooth, the variety $V$ satisfies the integral Hasse
principle under the stated hypotheses.
\end{theorem}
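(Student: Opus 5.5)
The plan follows Birch's argument, adapted to two weights. Write $X_1=P^{w_1/d}$, $X_2=P^{w_2/d}$ and
\[
S(\alpha)=\sum_{|\bfx|\le X_1,\,|\bfy|\le X_2} e(\alpha F(\bfx;\bfy)),
\qquad\text{so that}\qquad
R_F(P)=\int_0^1 S(\alpha)\d\alpha .
\]
Decompose $F$ into bihomogeneous pieces,
\[
F(\bfx;\bfy)=\sum_{j} F_j(\bfx;\bfy),
\]
where $F_j$ has degree $d_1=(d-jw_2)/w_1$ in $\bfx$ and degree $j$ in $\bfy$, over the $j$ with $w_1\mid d-jw_2$. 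Since $\gcd(w_1,w_2)=1$, the admissible $j$ form one residue class modulo $w_1$. The hypothesis $d>w_1w_2$ ensures that some piece has positive $\bfx$-degree, and more importantly that the maximal $\bfx$-degree is at most $d/w_1$.

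\textbf{Weyl-type inequality.} Difference only in the $\bfx$ variables, $k:=\lfloor d/w_1\rfloor$ times, by Cauchy--Schwarz and Weyl differencing, keeping $\bfy$ fixed. After $k-1$ steps every surviving term is linear in $\bfx$ at each fixed $\bfy$. The resulting multilinear forms $\Psi_i(\bfh_1,\ldots,\bfh_{k-1};\bfy)$ have coefficients that are polynomial in $\bfy$. Combining with a trivial or differenced treatment of the $\bfy$-sum, this gives
\[
|S(\alpha)|^{2^{k-1}}\ll X_1^{\ldots}X_2^{\ldots}\,\#\{(\bfh,\bfy):\ \|\alpha\Psi_i(\bfh;\bfy)\|<X_1^{-1}\ \forall i\}.
\]
The next steps are Davenport's shrinking lemma on the $\bfh$ variables, followed by a dichotomy. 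Either $|S(\alpha)|\le P^{n/d-\kappa\theta}$, where $n=s_1w_1+s_2w_2$, or $\alpha$ has a rational approximation $a/q$ with $q\le P^{\theta(\ldots)}$ and $|q\alpha-a|\le P^{-1+\theta(\ldots)}$, or the number of small solutions $(\bfh,\bfy)$ of $\Psi_i(\bfh;\bfy)=0$ is large. In the last case the usual Birch argument, passing to a variety inside the singular locus, bounds the dimension of
\[
\{(\bfh,\bfy):\Psi(\bfh;\bfy)=0\}
\]
by $(k-1)s_1+\sigma_F$-type quantities. Here $\kappa$ comes out as $(s_1+s_2-\sigma_F)/(2^{k-1}\cdot\text{weight factor})$. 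The factor $1+w_2/w_1$ in the hypothesis reflects the loss from the $\bfy$-scale $X_2=X_1^{w_2/w_1}$ being larger than the $\bfx$-scale.

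\textbf{Major and minor arcs.} On the minor arcs, integrate the pointwise bound over a dyadic family of arcs in the standard Birch way. The condition $\kappa>2\cdot(\text{power lost in the arc measure})$ then gives $o(P^{n/d-1})$ provided $s_1+s_2-\sigma_F>(1+w_2/w_1)(d/w_1)2^{d/w_1}$. On the major arcs, the weighted homogeneity gives
\[
S(a/q+\beta)=q^{-s_1-s_2}S_{a,q}\,I(P\beta)\,P^{n/d}+\text{error},
\]
with $S_{a,q}$ a complete exponential sum and $I$ the singular integral transform. The scaling $\bfx\mapsto X_1\bfx$, $\bfy\mapsto X_2\bfy$ turns $F$ into $P\,F$, exactly as in the homogeneous case. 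The bounds $S_{a,q}\ll q^{s_1+s_2-\kappa'}$ and $I(\gamma)\ll\min(1,|\gamma|^{-\kappa'})$ follow from the same Weyl inequality, applied with $P$ replaced by $q$ and by a limiting argument. These bounds give absolute convergence of $\mathfrak S$ and $J$ and the asymptotic formula.

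\textbf{Positivity and the Hasse principle.} Positivity of $\mathfrak S J$ under the existence of nonsingular real and $p$-adic points follows from Hensel lifting and the implicit function theorem. Weighted homogeneity enters here because scaling by $p^{w_i}$ preserves solutions, which gives $\mathfrak S_p>0$ and $J>0$. For the quasi-smooth case, $\sigma_F=0$, this yields the integral Hasse principle.

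The main obstacle I expect is the geometric step: bounding the dimension of the variety cut out by the differenced forms by something like $\sigma_F$ when only the $\bfx$ variables are differenced. Birch's argument uses the identity $\Psi(\bfx,\ldots,\bfx)=c\nabla F(\bfx)$ to compare with the singular locus, but here the $\bfy$-derivatives of $F$ are not directly visible. I would first try to recover $\partial F/\partial\bfy$ by also differencing in $\bfy$, with shifts of weighted size $X_2$. Alternatively, I would use the Euler identity
\[
\sum_i w_1x_i\,\partial_{x_i}F+\sum_j w_2y_j\,\partial_{y_j}F=dF,
\]
to control the missing derivatives. Losses at this step are what produce the factor $1+w_2/w_1$.
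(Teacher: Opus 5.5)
\textbf{There is a genuine gap.} It sits exactly at the step you flag as the main obstacle, and the analytic set-up feeding that step cannot be rescued by either remedy you suggest.

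\textbf{Why $\bfx$-differencing alone fails.} Differencing only in $\bfx$, about $\lfloor d/w_1\rfloor$ times, annihilates every bihomogeneous piece whose $\bfx$-degree is below the number of differencing steps. So your forms $\Psi(\mathbf h;\bfy)$ see only the piece of largest $\bfx$-degree, and on the diagonal they return $\nabla_{\bfx}$ of that piece, not $\nabla F$. The hypothesis on $\sigma_F$ says nothing about that piece. Take $(w_1,w_2,d)=(1,2,4)$, $s_1=s_2=s$ and
\[
F(\bfx;\bfy)=\sum_{j=1}^{s}\bigl(y_jx_j^2+y_j^2\bigr).
\]
Then $\sigma_F=0$, so the theorem applies once $s>96$. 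But the $(4,0)$-piece is identically zero, so $\Psi\equiv0$ and your Weyl inequality is empty. Adding $x_1^4$ keeps $\sigma_F=0$ while making the surviving piece singular in codimension $1$, so you save almost nothing.

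Differencing in $\bfy$ afterwards still sees only that top piece. The Euler identity links $F$ to its gradient, but it cannot recover pieces the differencing has already killed. There is also a secondary problem. $\Psi$ has degree $d_2(0)$ in $\bfy$, so $\bfy$ cannot be shrunk without being differenced. A rational approximation extracted with $\bfy$ still at scale $X_2$ then has denominator of size about $X_2^{d_2(0)}$ times a power of $H$, far larger than the $P^{\eta}$ you need.

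\textbf{The missing idea: one piece at a time, geometry afterwards.}
\begin{enumerate}
\item \emph{Isolating a piece.} To isolate $F^{(d_1(k),d_2(k))}$, difference $d_1(k)$ times in $\bfx$ and $d_2(k)$ times in $\bfy$. The $\bfx$-differences kill pieces of smaller $\bfx$-degree. Since $d_1(k)$ decreases while $d_2(k)$ increases in $k$, the $\bfy$-differences kill the pieces of larger $\bfx$-degree.
\item \emph{Shrinking.} Shrink all differencing variables to a common scale $H\le X_1$, using Schindler's two-scale shrinking lemma. This gives a Weyl alternative governed by $\dim V_i(F^{(d_1(k),d_2(k))})$ with constant $2^{r_k-1}r_k$, where $r_k=d_1(k)+d_2(k)-1$. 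For a pure piece this is Birch's lemma, with constant $2^{r_k}r_k$. The constraint $H\le X_1$, together with the usual factor $2$ from the measure of the major arcs, is what produces a constant at most $1+w_2/w_1$.
\item \emph{Reaching $\sigma_F$.} Since $\nabla F=\sum_k\nabla F^{(d_1(k),d_2(k))}$, one has $\bigcap_k\Sing{F^{(d_1(k),d_2(k))}}\subseteq\Sing{F}$. These are cones through the origin, hence $s_1+s_2-\sigma_F\le\sum_k\bigl(s_1+s_2-\sigma(k)\bigr)$. The $r_k$ are distinct integers in $[1,\lfloor d/w_1\rfloor-1]$, and $\sum_{r=1}^{\lfloor d/w_1\rfloor-1}2^rr<(d/w_1)2^{d/w_1}$. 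So some piece satisfies $s_1+s_2-\sigma(k)>(1+w_2/w_1)2^{r_k}r_k$.
\item \emph{From singular locus to $V_i$.} Pass from the singular locus of that piece to the loci $V_i$ via $\codim{\Sing{G}}\le2\min_i\codim{V_i(G)}$. This follows from the partial Euler relation $d_1G=\bfx\cdot\nabla_{\bfx}G$ and a generalized principal ideal theorem. The factor $2$ here accounts for $2^{r_k}$ versus $2^{r_k-1}$.
\end{enumerate}

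Your major-arc analysis, the convergence of $\grS$ and $J$, and the positivity argument are standard and fine.
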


Our other two theorems use geometric information about the individual
bihomogeneous components of $F$ and may give sharper bounds when the
corresponding singular loci are small. The possible bidegrees form the set
\[
    \mathcal D(d;w_1,w_2)
    :=\left\{(d_1,d_2)\in\Z_{\geq0}^2:
    w_1d_1+w_2d_2=d\right\}.
\]
Write its elements as
\[
    \mathcal D(d;w_1,w_2)
    =\left\{(d_1(k),d_2(k)):0\leq k\leq m\right\},
\]
where the first coordinates decrease with $k$. Coprimality of $w_1$ and
$w_2$ then gives
\[
    d_1(k+1)=d_1(k)-w_2,
    \qquad
    d_2(k+1)=d_2(k)+w_1.
\]
The weighted form $F$ has the unique decomposition
\begin{equation}\label{eqs:decomp}
    F(\bfx;\bfy)
    =\sum_{k=0}^{m}
    F^{(d_1(k),d_2(k))}(\bfx;\bfy),
\end{equation}
where $F^{(d_1,d_2)}$ is bihomogeneous of bidegree $(d_1,d_2)$ in
$(\bfx,\bfy)$. Let
\begin{equation}\label{def:sigma k}
    \sigma(k):=\dim\left\{(\bfx;\bfy)\in\A_{\C}^{s_1+s_2}:
    \nabla F^{(d_1(k),d_2(k))}(\bfx;\bfy)=\bf0\right\}.
\end{equation}
A stronger version of the following theorem is given in Theorem
\ref{thm:component-singular-locus-precise}.

\begin{theorem}\label{thm:component-singular-locus}
Suppose that $d>w_1w_2$ and that, for some $0\leq k\leq m$, one has
\[
    s_1+s_2-\sigma(k)
    >
    \left(1+\frac{w_2}{w_1}\right)
    2^{d_1(k)+d_2(k)-1}
    \bigl(d_1(k)+d_2(k)-1\bigr).
\]
Then, as $P\to\infty$, one has
\[
    R_F(P)
    =\mathfrak S J\,
    P^{(s_1w_1+s_2w_2)/d-1}
    +o\left(P^{(s_1w_1+s_2w_2)/d-1}\right),
\]
where $\mathfrak S$ and $J$ are the singular series and singular integral
associated with $F$. Moreover, $\mathfrak S J>0$ if the equation
$F(\bfx;\bfy)=0$ has a nonsingular solution over $\mathbb Q_p$ for every
prime $p$ and a nonsingular real solution.
\end{theorem}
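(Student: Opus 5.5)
We will use the circle method. Write $\theta_1=w_1/d$, $\theta_2=w_2/d$, and set
\[
S(\alpha)=\sum_{|\bfx|\le P^{\theta_1}}\sum_{|\bfy|\le P^{\theta_2}}e(\alpha F(\bfx;\bfy)),
\qquad
R_F(P)=\int_0^1 S(\alpha)\d\alpha .
\]
The central step is a Weyl-type inequality obtained by differencing with respect to the single bihomogeneous component $F^{(d_1(k),d_2(k))}$. The other components are not homogeneous in the same way, so we handle them through the weight structure.

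For a fixed $k$, we take $d_1=d_1(k)$, $d_2=d_2(k)$ and $D=d_1+d_2$. We Weyl-difference $D-1$ times: $d_1$ of the steps are in $\bfx$, with shifts of size $P^{\theta_1}$, and $d_2-1$ are in $\bfy$, with shifts of size $P^{\theta_2}$. The remaining $\bfy$-linear step is handled as in Schindler's bihomogeneous argument. After each differencing, every component of bidegree $(e_1,e_2)$ loses one degree in the differenced block.

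The key observation is the following. Any component $(d_1(j),d_2(j))$ with $j\neq k$ has either $d_1(j)<d_1$ or $d_2(j)<d_2$. Indeed, the bidegrees move along the line $w_1d_1+w_2d_2=d$ in steps of $(-w_2,w_1)$. Hence after $d_1$ differencings in $\bfx$ and $d_2-1$ in $\bfy$, each component with $j\neq k$ has either vanished or become independent of the block that still carries the linear form. The leading component, however, survives as a multilinear form $\Psi(\bfh^{(1)},\dots;\bfy)$ that is linear in $\bfy$.

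We expect that the ordering of the differencing steps needs care so that the surviving lower-order terms depend only on the shift variables and not on the summation variable. If they do depend on it, we bound them trivially; this is exactly where $d>w_1w_2$ should be used, to guarantee that $d_1\ge w_2$ or $d_2\ge w_1$, so that neighbouring bidegrees genuinely drop in the relevant variable. Summing the linear sum then gives the usual bound
\[
|S(\alpha)|^{2^{D-1}}\ll P^{(s_1\theta_1+s_2\theta_2)2^{D-1}-\cdots+\eps}\,\#\{\bfh:\ \|\alpha\,\partial_{\bfy}\Psi(\bfh)\|<P^{-\theta_2}\}.
\]

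Next we apply the geometry of numbers lemma of Davenport–Birch in the anisotropic box. This converts the count above into a dichotomy, in which $\tau$ is the savings parameter:
\begin{enumerate}
\item[(i)] $|S(\alpha)|\le P^{s_1\theta_1+s_2\theta_2-\kappa\tau}$, with $\kappa=(s_1+s_2-\sigma(k))/\bigl(2^{D-1}(D-1)(1+w_2/w_1)\bigr)$, up to $\eps$;
\item[(ii)] $\alpha$ has a rational approximation $|q\alpha-a|\le P^{-1+\tau'}$ with $q\le P^{\tau'}$;
\item[(iii)] the number of small multilinear solutions exceeds the bound coming from $\dim\Sing(F^{(d_1,d_2)})=\sigma(k)$.
\end{enumerate}
Alternative (iii) is excluded by the standard Birch dimension argument. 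The factor $(1+w_2/w_1)$ arises because the shrinking must be done uniformly in a box whose sides are $P^{\theta_1}$ and $P^{\theta_2}$, so we lose the ratio of the side exponents. I expect this shrinking step, and the careful treatment of the lower components during differencing, to be the main obstacle.

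With the Weyl inequality in hand, the rest follows Birch. On the minor arcs, the hypothesis gives $\kappa>1$, so a dyadic pruning over $\tau$ shows that the minor arcs contribute $o\bigl(P^{s_1\theta_1+s_2\theta_2-1}\bigr)$. On the major arcs, we approximate $S(a/q+\beta)$ by $q^{-s_1-s_2}S(q,a)\,I(\beta P)$, where $S(q,a)$ is the complete exponential sum and $I$ is the singular integral density. The weighted homogeneity makes the scaling clean: $F(P^{\theta_1}\bfu;P^{\theta_2}\bfv)=PF(\bfu;\bfv)$. The same Weyl bound gives $S(q,a)\ll q^{s_1+s_2-\kappa+\eps}$ and $I(\gamma)\ll\min(1,|\gamma|^{-\kappa})$, so $\mathfrak S$ and $J$ converge absolutely.

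Finally, we prove positivity. For $\mathfrak S$, we use the product of $p$-adic densities and a Hensel lift from a nonsingular $\Q_p$-solution. By weighted scaling we may take this solution $p$-integral, which gives the densities a positive lower bound; for large $p$, $\kappa>1$ makes the local factors $1+O(p^{-1-\delta})$. For $J$, a nonsingular real point, rescaled into the box, gives $J>0$ by the implicit function theorem.
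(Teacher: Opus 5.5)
\paragraph{There is a genuine gap, in the dimension step for mixed components.}
Your isolation of the $k$-th component by differencing in the two blocks is sound. For a pure component ($d_1(k)d_2(k)=0$) the Birch argument does see the full singular locus. For a mixed component, however, your alternative (iii) is not excluded by the hypothesis on $\sigma(k)$.

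\begin{itemize}
\item Your last sum is linear in $\bfy$, so the count involves $\partial_{\bfy}\Psi(\mathbf h)$, the polarization of $\nabla_{\bfy}F^{(d_1,d_2)}$. Restricted to the diagonal, its zero set is $V_2(F^{(d_1,d_2)})=\{\nabla_{\bfy}F^{(d_1,d_2)}=\mathbf 0\}$, not $\Sing{F^{(d_1,d_2)}}$.
\item When you shrink an $\bfx$-shift against the dual $\bfy$-variable, the pairing is not symmetric. Davenport's lemma therefore also produces the transposed system (cf.\ Lemma \ref{lemma:lattice point comparison}). What you actually control is $\min_i\codim{V_i}=s_1+s_2-\widehat\sigma(k)$.
\item This can be much smaller than $s_1+s_2-\sigma(k)$. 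Since $\Psi$ is linear in each $\bfx$-shift, the slice $\mathbf h_x^{(1)}=\mathbf 0$ always lies in your zero set, so you never save more than $H^{-s_1}$.
\item Concretely, take $G=\sum_{j=1}^{t}x_jq_j(\bfy^{(j)})$, with nonsingular quadratic forms $q_j$ in disjoint blocks of $n\ge2$ variables. Then $\codim{V_1(G)}=\codim{V_2(G)}=t$, but $\codim{\Sing{G}}=2t$.
\end{itemize}

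\paragraph{The missing ingredient, and why your numerics nonetheless match.}
What you need is the comparison $\codim{\Sing{G}}\le 2\min_i\codim{V_i(G)}$ for bihomogeneous $G$ (Lemma \ref{lemma:codimension comparison}). This is not a formal consequence of $\Sing{G}=V_1(G)\cap V_2(G)$, which only bounds $\max_i$. The paper proves it from the partial Euler relation and the Eisenbud--Huneke--Ulrich height bound for order ideals.

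That comparison costs a factor $2$, and your Weyl step uses one squaring too many. You reach $|S|^{2^{r_k}}$ with $r_k=d_1+d_2-1$. Even with the comparison you would therefore only prove the result under $s_1+s_2-\sigma(k)>(1+w_2/w_1)2^{r_k+1}r_k$.

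The paper recovers the factor $2$ through Schindler's refinement:
\begin{itemize}
\item it uses only $d_1-1$ squarings in $\bfx$ and $d_2-1$ in $\bfy$;
\item the remaining bilinear sum in $(\bfx_{d_1},\bfy_{d_2})$ is bounded directly by Lemma \ref{lemma:bilinearbound};
\item this gives $\varkappa(k)=2^{r_k-1}r_k$ in terms of $\widehat\sigma(k)$ (Theorem \ref{thm:V1-V2-criterion-precise});
\item the comparison lemma then converts this into exactly $2^{r_k}r_k$ in terms of $\sigma(k)$.
\end{itemize}
Your threshold agrees with the statement only because these two factor-$2$ discrepancies cancel.

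\paragraph{A smaller slip in the singular series.}
With your normalization, $\kappa>1$ does not make $\sum_q q^{1-\kappa}$ converge. For $S(a,q)$ you should work in the isotropic box $X=Y=P=q$. There the $(1+w_2/w_1)$ loss disappears and the saving exponent exceeds $2$.
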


For a bihomogeneous form $H(\bfx;\bfy)$ depending on both sets of
variables, let $V_1(H)$ and $V_2(H)$ denote the affine varieties defined
by $\nabla_{\bfx}H=\bf0$ and $\nabla_{\bfy}H=\bf0$, respectively, as in
Schindler \cite{Schindler2014,Schindler2016}. If $H$ depends only on
$\bfx$, we regard $V_1(H)$ as a subvariety of $\A_{\C}^{s_1}$; if it
depends only on $\bfy$, we similarly regard $V_2(H)$ as a subvariety of
$\A_{\C}^{s_2}$. For $0\leq k\leq m$, put
\begin{equation}\label{def:hat sigma}
\widehat{\sigma}(k):=
\begin{cases}
s_2+\dim V_1(F^{(d_1(k),0)}),&d_2(k)=0,\\[2mm]
\displaystyle\max_{i\in\{1,2\}}
\dim V_i(F^{(d_1(k),d_2(k))}),&d_1(k)d_2(k)>0,\\[2mm]
s_1+\dim V_2(F^{(0,d_2(k))}),&d_1(k)=0.
\end{cases}
\end{equation}
Thus $s_1+s_2-\widehat{\sigma}(k)$ is the relevant singular-locus
codimension for every $k$. A stronger version of the following theorem is
given in Theorem \ref{thm:V1-V2-criterion-precise}.

\begin{theorem}\label{thm:V1-V2-criterion}
Suppose that $d>w_1w_2$. For $0\leq k\leq m$, put
\[
    \iota_k:=
    \begin{cases}
        1,&d_1(k)d_2(k)>0,\\
        0,&d_1(k)d_2(k)=0.
    \end{cases}
\]
If, for some $0\leq k\leq m$, one has
\[
    s_1+s_2-\widehat{\sigma}(k)
    >
    \left(1+\frac{w_2}{w_1}\right)
    2^{d_1(k)+d_2(k)-1-\iota_k}
    \bigl(d_1(k)+d_2(k)-1\bigr),
\]
then, as $P\to\infty$, one has
\[
    R_F(P)
    =\mathfrak S J\,
    P^{(s_1w_1+s_2w_2)/d-1}
    +o\left(P^{(s_1w_1+s_2w_2)/d-1}\right),
\]
where $\mathfrak S$ and $J$ are the singular series and singular integral
associated with $F$. Moreover, $\mathfrak S J>0$ if the equation
$F(\bfx;\bfy)=0$ has a nonsingular solution over $\mathbb Q_p$ for every
prime $p$ and a nonsingular real solution.
\end{theorem}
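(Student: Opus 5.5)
Theorem \ref{thm:V1-V2-criterion} will follow the same circle-method architecture as Theorems \ref{thm:weighted-singular-locus} and \ref{thm:component-singular-locus}; only the Weyl-type input for the distinguished component changes. Write $S(\alpha)=\sum_{\bfx,\bfy}e(\alpha F(\bfx;\bfy))$ over the weighted box $|\bfx|\le P^{w_1/d}$, $|\bfy|\le P^{w_2/d}$, so that $R_F(P)=\int_0^1 S(\alpha)\d\alpha$. We fix the index $k$ from the hypothesis and set $(d_1,d_2)=(d_1(k),d_2(k))$ and $D=d_1+d_2$.

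The first step is to extract $F^{(d_1,d_2)}$ from $S(\alpha)$ by differencing. If $d_1d_2>0$, we difference $d_1-1$ times in $\bfx$ and $d_2$ times in $\bfy$, or symmetrically $d_1$ times in $\bfx$ and $d_2-1$ times in $\bfy$, and apply Cauchy--Schwarz. This follows Schindler's bihomogeneous Weyl differencing \cite{Schindler2014}, which uses $D-2$ rather than $D-1$ Cauchy--Schwarz steps and accounts for the saving $2^{-\iota_k}$. The key point is that $d>w_1w_2$ makes the differencing kill every component $F^{(d_1(j),d_2(j))}$ with $j\neq k$. Indeed, each such component either has $x$-degree below $d_1$ and is annihilated by the $\bfx$-differences, or has $y$-degree below $d_2$ and is annihilated by the $\bfy$-differences, since the bidegrees move in steps of $(-w_2,+w_1)$. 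What remains is a multilinear form in the differencing variables together with one free variable in the leftover block. For $d_1d_2=0$ the component is a pure form in one set of variables and we use the ordinary Birch differencing with $D-1$ steps, with the other block contributing trivially. This is where the additive $s_2$ or $s_1$ in the definition of $\widehat\sigma(k)$ comes from.

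Next comes the standard geometric dichotomy. Either $|S(\alpha)|$ is at most $P^{(s_1w_1+s_2w_2)/d-\theta K}$ in the usual Birch form, with $K=(s_1+s_2-\widehat\sigma(k))/2^{D-1-\iota_k}$, or else $\alpha$ has a rational approximation $a/q$ with $q$ and $|q\alpha-a|$ controlled by powers of $P^{\theta}$. The counting of solutions to the multilinear system is bounded through $\dim V_1$ or $\dim V_2$, exactly as in Schindler's lemmas. The new feature is the anisotropic box. The differencing variables for $\bfx$ and $\bfy$ range over $P^{w_1/d}$ and $P^{w_2/d}$ respectively, so the geometry-of-numbers shrinking step must be carried out with two different scales. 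In the worst case this costs a factor that we measure relative to $P^{w_1/d}$, and this is where the factor $(1+w_2/w_1)$ enters. We then treat the major arcs by the usual approximation $S(\alpha)\approx q^{-s_1-s_2}S(q,a)\,I(\beta)$, obtaining the singular series $\mathfrak S$ and singular integral $J$. Absolute convergence holds once $K>1+w_2/w_1$, which is what the displayed inequality guarantees. Positivity follows from Hensel lifting and the implicit function theorem given nonsingular local solutions, as in Theorem \ref{thm:weighted-singular-locus}.

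I expect the main obstacle to be the minor-arc bound under the two-scale box. The trouble is that the number of differencing steps in $\bfx$ and in $\bfy$ is governed by $(d_1,d_2)$, whereas the sizes are governed by the weights. One must pick the dichotomy parameters so that the resulting $q$-bound is compatible with the length $P^{-1}$ of the weighted major arcs. My first attempt would be to run the shrinking lemma uniformly at scale $P^{w_1/d}$ for all variables, viewing each $\bfy$-box as a union of $P^{(w_2-w_1)/d}$-many $\bfx$-sized boxes. I would accept the loss absorbed by $(1+w_2/w_1)$ and only refine if the exponents fail to close.
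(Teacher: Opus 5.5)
Your isolation of $F^{(d_1(k),d_2(k))}$ by Schindler-type differencing matches the paper. This gives $|S(\a)|$ raised to the power $2^{d_1+d_2-2}$, or $2^{d_1+d_2-1}$ for pure components. Your appeal to Schindler's Lemmas 4.2--4.3, bounding the multilinear zero sets through $V_1,V_2$, also matches. The gap is in the step you flag as the main obstacle, and the route you propose for it fails.

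You are right that all differencing variables should end at a common scale $H\le P^{w_1/d}$. But you cannot bring the $\bfy$-variables there by covering their range $|\bfy_j|\le CY$ with boxes of side $X$. Doing so costs the volume factor $(Y/X)^{s_2}$ in the count, while the tolerance $c_0Y^{-1}$ does not improve; the difference trick inside a box even doubles it. Lemma \ref{lemma:lattice point comparison}, applied to the same variable from $CY$ to $X$, costs the same factor and improves the tolerance by $X/(CY)$.

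After isotropic shrinking from $X$ to $H$ you are left with $q\ll H^{d_1+d_2-1}$ and $|q\a-a|\ll(Y/X)^{d_2-1}P^{-1}H^{d_1+d_2-1}$. This misses the weighted arcs $|q\a-a|\le P^{\eta-1}$ by the fixed power $P^{(d_2-1)(w_2-w_1)/d}$ whenever $d_2\geq2$, and by $(Y/X)^{d_2}$ if you subdivide the exponential sum itself. This is not a constant loss that $1+w_2/w_1$ can absorb. For $(w_1,w_2,d)=(1,3,9)$ and $(d_1,d_2)=(3,2)$ the loss is $P^{2/9}$. So an $\a$ just outside $\grM(\theta)$ with $\theta<w_1/(5d)$ gets no Weyl bound at any level. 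Meanwhile the major-arc approximation has error $\ll X^{s_1}Y^{s_2}P^{-w_1/d}q(1+P|\b|)$ with $P^{-w_1/d}=P^{-1/9}$, so it cannot cover arcs that wide.

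The paper instead applies Lemma \ref{lemma:lattice point comparison} to each differencing variable directly from its native range down to $H$: from $CX$ for $\bfx$-variables and from $CY$ for $\bfy$-variables. Each $\bfy$-step multiplies the tolerance by $H/(CY)$. The final tolerance is therefore $\asymp X^{-d_1}Y^{-d_2}H^{d_1+d_2-1}=P^{-1}H^{d_1+d_2-1}$, matched to the weighted arcs precisely because $w_1d_1+w_2d_2=d$. The anisotropy is handled in two checks. First, $Z_2^2\le Cc_0$ at every step, which uses $X\le Y$ and $H\le X$. Second, because all variables end at the same scale $H$, symmetry of the fully differenced form turns the $2^{d_1+d_2-1}$ resulting counts into $\mathcal N_{1,k}$ or $\mathcal N_{2,k}$.

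Your bookkeeping of the constants is also off. The factor $d_1+d_2-1$ in the hypothesis is absent from your $K$. It comes from the choice $H=P^{\eta/(d_1+d_2-1)}$, which makes $q\le P^{\eta}$. In your normalization, absolute convergence of $\grS$ and $J$ requires $K>2(d_1+d_2-1)$, not $K>1+w_2/w_1$. In the paper's normalization $K=(s_1+s_2-\widehat\sigma(k))/\varkappa(k)$, convergence needs only $K>2$.

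The factor $1+w_2/w_1$ belongs to the minor arcs, not to convergence. The restriction $H\le X$ (or $H\le Y$ for a pure $\bfy$-component) caps the admissible level at $\eta_*=w_1r_\ell/d$. This can be below $1/2$; it is $4/9$ for $(1,3,9)$. Then $\grM(\eta_*)$ does not cover $[0,1)$, and one needs $K\eta_*>1$ on $\grm(\eta_*)$. Together with the ladder argument of Lemma \ref{lemma: minor arc bound}, which needs $K>2$, this gives the requirement $K>\max\{2,1/\eta_*\}$. Finally $1/\eta_*\le1+w_2/w_1$ by \eqref{eq:C simple upper bound}. Your sketch never identifies this mechanism, so it gives no reason why the exponents close.
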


\subsection{Exploiting the weighted structure}
\label{subsec:comparison with Birch}

We first explain the restriction $d>w_1w_2$. If $d<w_1w_2$, then
$\mathcal D(d;w_1,w_2)$ contains at most one bidegree. Indeed, two distinct
nonnegative solutions of $w_1d_1+w_2d_2=d$ differ by a nonzero integral
multiple of $(w_2,-w_1)$, which would force $d\geq w_1w_2$. Thus a
nonzero weighted form of degree $d<w_1w_2$ is either an ordinary
homogeneous form in one set of variables or a single bihomogeneous form.
These cases fall within the settings treated by Birch \cite{Birch1962} and
Schindler \cite{Schindler2014,Schindler2016}, respectively.

When $d=w_1w_2$, the only possible bidegrees are
\[
    (w_2,0),
    \qquad
    (0,w_1).
\]
Hence
\[
    F(\bfx;\bfy)
    =F^{(w_2,0)}(\bfx)+F^{(0,w_1)}(\bfy).
\]
The corresponding exponential sum factors into two homogeneous
exponential sums, and the circle method may be applied using the usual
homogeneous estimates for the two factors. The range $d>w_1w_2$ is
therefore the first range in which a weighted form may exhibit a more
complicated interaction among its bihomogeneous components, and it is the
range considered here.

Our results have two main advantages over the bound suggested by naively
treating a weighted form as an ordinary homogeneous form; we explain this
comparison below.  The first is that the number of differencing steps may be
determined by the total degree of a suitable bihomogeneous component, rather
than by the weighted degree of the original form, which may be much larger.
The second is that our methods respect the weighted structure of the form and
preserve the contribution of the variables of larger weight.

The guiding analytic principle behind our investigation of weighted forms is
that a variable of weight $w$ can occur with ordinary degree at most $d/w$.
Thus variables of larger weight occur with lower degree, and one expects to
establish stronger cancellation in the corresponding exponential sums with
fewer differencing steps.  A treatment based on an associated ordinary
homogeneous form obscures precisely this feature by raising the variables to
powers that make every monomial have ordinary degree $d$.  This principle was
first explored by the first author in \cite{Flores2024} for a structured
family of weighted quartic forms, in which the variables of larger weight
occur quadratically rather than quartically.  

To describe the comparison, consider the modified polynomial
\[
G(\bfx;\bfy):=F(\bfx^{w_1};\bfy^{w_2}),
\]
where the powers are taken coordinatewise.  This is an ordinary homogeneous
form of degree $d$. Put
\[
\sigma_G:=
\dim\left\{(\bfx;\bfy)\in\A_{\C}^{s_1+s_2}:
\nabla G(\bfx;\bfy)=\bf0\right\}.
\]
If one disregards the weighted structure and uses $G(\bfx;\bfy)$, then
Birch's theorem suggests a variable threshold of size
\[
    s_1+s_2-\sigma_G>(d-1)2^d.
\]
Note that this is only a heuristic and that this substitution does not identify the integer
solutions counted by $R_F(P)$ with all integer solutions of $G=0$.

We first illustrate the saving in the number of differencing steps. If
$H(\bfx;\bfy)$ depends on both sets of variables, then
\[
\A_{\C}^{s_1}\times\{\mathbf 0\}\subseteq V_1(H),
\qquad
\{\mathbf 0\}\times\A_{\C}^{s_2}\subseteq V_2(H).
\]
Thus the smallest possible dimensions of these varieties are $s_1$ and
$s_2$, respectively. A particularly favorable case occurs when $w_2\mid d$
and the singular locus of the pure $\bfy$-component
$F^{(0,d/w_2)}$ is zero-dimensional. Applying Theorem
\ref{thm:V1-V2-criterion-precise} to this component requires only
\[
s_2>
\max\left\{2,
\frac{dw_2}{w_1\bigl(d+w_2(w_2-w_1-1)\bigr)}\right\}
2^{d/w_2-1}\left(\frac d{w_2}-1\right).
\]
For example, under the additional assumption that $\sigma_G=0$, the following
table compares the resulting conditions.
\[
\begin{array}{c|c|c|c}
(w_1,w_2) & d & \text{Naive Birch condition}
& \text{Pure $\bfy$-component condition}\\ \hline
(1,2) & 4  & s_1+s_2>48    & s_2>4\\
(1,3) & 9  & s_1+s_2>4096  & s_2>18\\
(2,3) & 12 & s_1+s_2>45056 & s_2>48
\end{array}.
\]
An even more pronounced saving is visible in the family
$(w_1,w_2,d)=(1,W,2W)$.  The case $W=2$, for a particular family of
weighted quartic forms, was studied previously by the first author in
\cite{Flores2024}. In this family, the pure $\bfy$-component condition above
reduces to
\[
    s_2>4,
\]
whereas, if the associated ordinary homogeneous form
$G(\bfx;\bfy)=F(\bfx;\bfy^W)$ is nonsingular, a naive application of Birch's
theorem would lead to the condition
\[
    s_1+s_2>(2W-1)2^{2W}.
\]

This naive comparison from $F$ to $G$ need not preserve the dimension of the 
associated singular locus. As with everything in life, there is a cost to making
 our lives easier, here the cost is geometric in nature. Consider the family of 
 weighted forms satisfying
\[
    (w_1,w_2,d)=(1,W,2W),
    \qquad W>1.
\]
Put
\[
    G(\bfx;\bfy)=F(\bfx;\bfy^W),
\]
so that the chain rule gives
\[
\frac{\partial G}{\partial x_i}
=\frac{\partial F}{\partial x_i}(\bfx;\bfy^W),
\qquad
\frac{\partial G}{\partial y_j}
=W y_j^{W-1}
\frac{\partial F}{\partial y_j}(\bfx;\bfy^W),
\]
and consider the finite morphism
\[
    (\bfx;\bfy)\longmapsto(\bfx;\bfy^W).
\]
It preserves dimension, and its inverse image of the singular locus of $F$ is
contained in the singular locus of $G$.  Hence $\sigma_G\geq\sigma_F$.
Moreover, the factor $y_j^{W-1}$ permits the equation involving
$\partial F/\partial y_j$ to be lost on the coordinate hyperplane $y_j=0$.

Thus, it is not difficult to show that
\begin{equation}\label{eq:sigma G comparison}
    \sigma_F\leq\sigma_G\leq\sigma_F+s_2.
\end{equation}

Additionally, the upper bound in \eqref{eq:sigma G comparison} can be seen to be sharp by
considering, when $s_1=s_2=s$, the weighted form
\[
    F(\bfx;\bfy)=
    \sum_{j=1}^{s}\left(y_jx_j^W+y_j^2\right).
\]
For this example, one has $\sigma_F=0$ and $\sigma_G=s=s_2$.  Thus the
upper bound in \eqref{eq:sigma G comparison} is attained.

\subsection*{Acknowledgements}
The second author gratefully acknowledges support from the University of
California, Davis, through the KAP allocation, and thanks Rice University for supporting 
a visit that facilitated the completion of this work. The first author thanks Trevor
D. Wooley for suggesting this problem to him as his thesis problem, and thanks Rice 
University for its support. The authors also thank ChatGPT 5.6 Sol for suggesting 
the commutative-algebraic proof strategy used in Lemma \ref{lemma:codimension comparison}.
 The authors rewrote the argument and verified its mathematical accuracy 
 and rigor.

\subsection{Notational conventions}\label{subsec:notational conventions}

Unless stated otherwise, the parameters $X$, $Y$, and $P$ are positive and
tend to infinity.  Each assertion containing $\eps$ is understood to hold
for every $\eps>0$.  The implicit constants in Vinogradov's notation may
depend on $\eps$, $s_1$, $s_2$, $d$, the weights, and the coefficients of
$F$.

We use lower-case bold letters for vectors and upper-case bold letters for
matrices.  For either a vector $\bfu$ or a matrix $\mathbf U$, the symbols
$|\bfu|$ and $|\mathbf U|$ denote the maximum of the absolute values of the
entries.  Inequalities between vectors or matrices are interpreted
entrywise.

For $z\in\R$, let $\|z\|$ denote the distance from $z$ to the nearest
integer.  If $\bfu=(u_1,\ldots,u_r)$, put
\[
    \|\bfu\|:=\max_{1\leq i\leq r}\|u_i\|.
\]
We write $\{z\}=z-\lfloor z\rfloor$ for the fractional part of a real
number $z$.  For a vector $\bfu=(u_1,\ldots,u_r)$, fractional parts are
taken coordinatewise, so that
$\{\bfu\}=(\{u_1\},\ldots,\{u_r\})\in[0,1)^r$.
When $B\subset\R^r$ is a box, a sum over $\bfu\in B$ is understood to run
over $B\cap\Z^r$.  Finally, write $e(z)=e^{2\pi iz}$.

If the list of summation variables is empty, the corresponding sum is
omitted. Thus, when $i=0$, our convention is
\[
    \sum_{\bfx_1,\ldots,\bfx_i\in B} A=A.
\]

\section{Auxiliary lemmas}\label{sec:auxiliary lemmas}
We begin by recording several results
from Schmidt \cite{Schmidt1985} and Schindler \cite{Schindler2014}.
For vectors $\bfu_1,\ldots,\bfu_j\in\Z^r$, we use the tuple notation
\begin{equation}\label{eq:tuple notation}
    \mathbf U^{(j)}:=[\bfu_1,\ldots,\bfu_j]\in\Z^{r\times j}.
\end{equation}
In particular, we write
$\bfX^{(j_1)}=[\bfx_1,\ldots,\bfx_{j_1}]$ and
$\bfY^{(j_2)}=[\bfy_1,\ldots,\bfy_{j_2}]$, with their ambient dimensions
understood from context. When $j=0$, this notation represents the empty
tuple, in accordance with subsection \ref{subsec:notational conventions}.

For a
polynomial $F\in\R[x_1,\ldots,x_s]$ and an integer $j\geq1$, define
\[
F_j\bigl(\bfX^{(j)}\bigr)
:=\sum_{\boldsymbol\epsilon\in\{0,1\}^j}
(-1)^{\epsilon_1+\cdots+\epsilon_j}
F(\epsilon_1\bfx_1+\cdots+\epsilon_j\bfx_j).
\]
For a finite set $B\subset\Z^s$, let $B^D:=B-B$ and
\[
B\bigl(\bfX^{(j)}\bigr)
:=\bigcap_{\boldsymbol\epsilon\in\{0,1\}^j}
\left(B-(\epsilon_1\bfx_1+\cdots+\epsilon_j\bfx_j)\right).
\]
When $j=0$, we set $B(\bfX^{(0)}):=B$.

The following is a version of Weyl's inequality due to Schmidt.
\begin{lemma}\label{lemma:schmidtweyl}
    Let $F \in \R[x_1,\ldots,x_s]$ and put
    \[S = \sum_{\bfx \in B} e(F(\bfx)).\]
    Then, for every integer $j\geq1$,
    \[
    |S|^{2^{j-1}}
    \leq |B^D|^{2^{j-1}-j}
    \sum_{\bfx_1,\ldots,\bfx_{j-1}\in B^D}
    \left|\sum_{\bfx_j\in B(\bfX^{(j-1)})}
    e\bigl(F_j(\bfX^{(j)})\bigr)\right|.
    \]
\end{lemma}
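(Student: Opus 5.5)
We argue by induction on $j$, each step being one application of Cauchy--Schwarz to the innermost sum. This is the usual Weyl differencing, run in Schmidt's form so that the shifted boxes $B(\bfX^{(j)})$ are kept exact.

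For $j=1$ the claim reads $|S|\leq |B^D|^{0}\,|S|$, since $F_1(\bfx_1)=F(\bfx_1)-F(\mathbf 0)$ differs from $F(\bfx_1)$ by a constant, $B(\bfX^{(0)})=B$, and the sum over $\bfx_1,\ldots,\bfx_0$ is empty. This holds trivially.

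Now assume the inequality for some $j\geq1$ and write
\[
T(\bfX^{(j-1)}):=\sum_{\bfx_j\in B(\bfX^{(j-1)})}e\bigl(F_j(\bfX^{(j)})\bigr).
\]
Squaring the inductive bound and applying Cauchy--Schwarz over the $|B^D|^{j-1}$ tuples $(\bfx_1,\ldots,\bfx_{j-1})$ gives
\[
|S|^{2^j}\leq |B^D|^{2^j-2j}\,|B^D|^{j-1}\sum_{\bfx_1,\ldots,\bfx_{j-1}\in B^D}|T(\bfX^{(j-1)})|^2 .
\]
We expand $|T|^2=\sum_{\bfu,\bfv\in B(\bfX^{(j-1)})}e\bigl(F_j(\bfX^{(j-1)},\bfu)-F_j(\bfX^{(j-1)},\bfv)\bigr)$ and set $\bfx_j=\bfv$, $\bfx_{j+1}=\bfu-\bfv$. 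Three facts are needed.
\begin{itemize}
\item From the definition, $F_{j+1}(\bfX^{(j+1)})=F_j(\bfX^{(j-1)},\bfx_j+\bfx_{j+1})-F_j(\bfX^{(j-1)},\bfx_j)-F_j(\bfX^{(j-1)},\bfx_{j+1})$. Indeed, split $\boldsymbol\epsilon\in\{0,1\}^{j+1}$ according to $\epsilon_{j+1}$ and note that $F_j$ is a signed sum over subsets of its arguments.
\item Hence the phase equals $F_{j+1}(\bfX^{(j+1)})+F_j(\bfX^{(j-1)},\bfx_{j+1})$, and the second term does not depend on $\bfx_j$.
\item The pairs $\bfu,\bfv\in B(\bfX^{(j-1)})$ correspond exactly to $\bfx_{j+1}\in B(\bfX^{(j-1)})^D\subseteq B^D$ together with $\bfx_j\in B(\bfX^{(j+1)})$. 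This is because $\bfv\in B(\bfX^{(j-1)})$ and $\bfv+\bfx_{j+1}\in B(\bfX^{(j-1)})$ together unfold to the intersection over all $\boldsymbol\epsilon\in\{0,1\}^{j+1}$.
\end{itemize}

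Consequently
\[
|T|^2\leq \sum_{\bfx_{j+1}\in B^D}\Bigl|\sum_{\bfx_j\in B(\bfX^{(j+1)})}e\bigl(F_{j+1}(\bfX^{(j+1)})\bigr)\Bigr|.
\]
Here the sum over $\bfx_{j+1}$ is extended from $B(\bfX^{(j-1)})^D$ to $B^D$; the extra terms have empty inner range when no valid pair exists, so this costs nothing.

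This bound is not yet in the required shape, because the inner sum runs over $\bfx_j$ rather than over the last variable. Since $F_{j+1}$ is symmetric in its arguments and $B(\bfX^{(j+1)})$ is symmetric in $\bfx_1,\ldots,\bfx_{j+1}$, we may swap the names $\bfx_j$ and $\bfx_{j+1}$. The outer variables then range over $B^D$ and the inner one over $B(\bfX^{(j)})$, matching the statement for $j+1$. Collecting the exponent, $2^j-2j+j-1=2^j-(j+1)$, which is the required power of $|B^D|$.

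The only delicate step is this relabelling. After Cauchy--Schwarz the inner variable is $\bfx_j$, constrained by conditions involving $\bfx_{j+1}$, while $\bfx_{j+1}$ must lie in $B^D$ only. Symmetry of the $(j+1)$-fold box and of $F_{j+1}$ resolves it; otherwise one can simply check the inclusion of index sets directly.
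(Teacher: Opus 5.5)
Your approach is the standard Weyl differencing (Cauchy--Schwarz) induction behind Schmidt's Lemma 11.1. The paper cites that lemma for $j\geq2$ and checks $j=1$ by hand, as you do. The exponent count $2^j-2j+(j-1)=2^j-(j+1)$ is right. So are the triangle inequality after expanding $|T|^2$ and the enlargement of the difference range to $B^D$.

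\textbf{The range of the inner variable is misidentified.} Your third bullet is wrong as written, and your justification of the relabelling rests on it. The conditions $\bfv\in B(\bfX^{(j-1)})$ and $\bfv+\bfx_{j+1}\in B(\bfX^{(j-1)})$ unfold to
$\bfv+\sum_{i\leq j-1}\epsilon_i\bfx_i+\epsilon_{j+1}\bfx_{j+1}\in B$ for all $(\epsilon_1,\ldots,\epsilon_{j-1},\epsilon_{j+1})\in\{0,1\}^{j}$. This is an intersection over $\{0,1\}^j$, not $\{0,1\}^{j+1}$. Hence $\bfx_j=\bfv$ ranges over the $j$-fold box $B(\bfx_1,\ldots,\bfx_{j-1},\bfx_{j+1})$, not over $B(\bfX^{(j+1)})$. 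Membership $\bfx_j\in B(\bfX^{(j+1)})$ would involve $\bfx_j$ itself; for instance it forces $2\bfx_j\in B$. In particular, no symmetry of a $(j+1)$-fold box can turn it into $B(\bfX^{(j)})$ by renaming.

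With the correct set, renaming $\bfx_j\leftrightarrow\bfx_{j+1}$ makes the inner range exactly $B(\bfX^{(j)})$. The only symmetry used is that of $F_{j+1}$ in its arguments. Simpler still, put $\bfx_j=\bfu-\bfv$ and $\bfx_{j+1}=\bfv$ from the outset. Then the inner range is $B(\bfX^{(j)})$ directly, the outer variable lies in $B(\bfX^{(j-1)})^D\subseteq B^D$, and the leftover phase $F_j(\bfX^{(j-1)},\bfx_j)$ does not depend on $\bfx_{j+1}$.

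\textbf{A sign slip.} With the paper's sign $(-1)^{\epsilon_1+\cdots+\epsilon_j}$ one has $F_1(\bfx)=F(\mathbf 0)-F(\bfx)$. One also has $F_{j+1}(\bfX^{(j+1)})=F_j(\bfX^{(j-1)},\bfx_j)+F_j(\bfX^{(j-1)},\bfx_{j+1})-F_j(\bfX^{(j-1)},\bfx_j+\bfx_{j+1})$, which is the negative of your identity. Your identity holds for $(-1)^jF_j$ rather than for the paper's $F_j$. This is harmless: it only conjugates the inner exponential sum, and you take absolute values.
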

\begin{proof}
    For $j\geq2$, see \cite[Lemma 11.1]{Schmidt1985}. When $j=1$, the tuple
    $\bfX^{(j-1)}$ is empty and $B(\bfX^{(0)})=B$. Moreover,
    $F_1(\bfx)=F(0)-F(\bfx)$, so the right hand side is $|S|$. Thus the
    asserted inequality is an identity in this case.
\end{proof}

\begin{lemma}\label{lemma:schindlerweyl}
    Let $F \in \R[x_1,\ldots,x_s]$ and put
    \[S = \sum_{\bfx \in B} e(F(\bfx)).\]
    Then, for every integer $j\geq2$,
    \[
    |S|^{2^{j-1}}\leq |B^D|^{2^{j-1}-j}
    \sum_{\substack{\bfx_1,\ldots,\bfx_{j-2}\in B^D \\
    \bfx_{j-1}\in B(\bfX^{(j-2)})^D \\
    \bfx_j\in B(\bfX^{(j-1)})}}
    e\bigl(F_j(\bfX^{(j)})-F_{j-1}(\bfX^{(j-1)})\bigr),
    \]
    where the sum on the right is a real non-negative value.
\end{lemma}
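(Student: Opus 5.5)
We derive Lemma \ref{lemma:schindlerweyl} from Lemma \ref{lemma:schmidtweyl} applied with $j-1$ in place of $j$, followed by one further Cauchy--Schwarz step in which the innermost square is expanded rather than bounded in absolute value.

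\textbf{Step 1: apply the earlier lemma.} Lemma \ref{lemma:schmidtweyl} with $j-1\geq1$ gives
\[
|S|^{2^{j-2}}\leq |B^D|^{2^{j-2}-j+1}\sum_{\bfx_1,\ldots,\bfx_{j-2}\in B^D}\left|T(\bfX^{(j-2)})\right|,
\qquad
T(\bfX^{(j-2)}):=\sum_{\bfz\in B(\bfX^{(j-2)})}e\bigl(F_{j-1}(\bfX^{(j-2)},\bfz)\bigr).
\]

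\textbf{Step 2: square and use Cauchy--Schwarz.} The outer sum has at most $|B^D|^{j-2}$ terms, so
\[
|S|^{2^{j-1}}\leq |B^D|^{2^{j-1}-2j+2}\,|B^D|^{j-2}\sum_{\bfx_1,\ldots,\bfx_{j-2}}|T|^2
=|B^D|^{2^{j-1}-j}\sum_{\bfx_1,\ldots,\bfx_{j-2}}|T|^2,
\]
and the power of $|B^D|$ is exactly the one claimed.

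\textbf{Step 3: expand $|T|^2$.} Write $B'=B(\bfX^{(j-2)})$ and expand
\[
|T|^2=\sum_{\bfz,\bfz'\in B'}e\bigl(F_{j-1}(\bfX^{(j-2)},\bfz')-F_{j-1}(\bfX^{(j-2)},\bfz)\bigr).
\]
Substitute $\bfz'=\bfz+\bfx_{j-1}$ with $\bfx_{j-1}\in B'^D=B(\bfX^{(j-2)})^D$, so that the constraint becomes $\bfz\in B'\cap(B'-\bfx_{j-1})$. This set equals $B(\bfX^{(j-1)})$, because the intersection over $\boldsymbol\epsilon\in\{0,1\}^{j-1}$ splits according to whether $\epsilon_{j-1}=0$ or $1$. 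Renaming $\bfz=\bfx_j$, the phase becomes
\[
F_{j-1}(\bfx_1,\ldots,\bfx_{j-2},\bfx_j+\bfx_{j-1})-F_{j-1}(\bfx_1,\ldots,\bfx_{j-2},\bfx_j).
\]

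\textbf{Step 4: match the phase with $F_j-F_{j-1}$.} This is the only step needing care with signs and orderings. Expand $F_j(\bfX^{(j)})$ by the values of $\epsilon_{j-1}$ and $\epsilon_j$. Set
\[
G(\bfu):=\sum_{\boldsymbol\epsilon'\in\{0,1\}^{j-2}}(-1)^{|\boldsymbol\epsilon'|}F\bigl(\textstyle\sum_{i\leq j-2}\epsilon_i\bfx_i+\bfu\bigr).
\]
Then, with the sign conventions of the definition,
\[
F_{j-1}(\bfX^{(j-2)},\bfu)=G(\bf0)-G(\bfu),
\qquad
F_j(\bfX^{(j)})=G(\bf0)-G(\bfx_{j-1})-G(\bfx_j)+G(\bfx_{j-1}+\bfx_j).
\]
The phase from Step 3 is
\[
\bigl(G(\bf0)-G(\bfx_{j-1}+\bfx_j)\bigr)-\bigl(G(\bf0)-G(\bfx_j)\bigr)=G(\bfx_j)-G(\bfx_{j-1}+\bfx_j).
\]
On the other hand,
\[
F_j(\bfX^{(j)})-F_{j-1}(\bfX^{(j-1)})=G(\bfx_{j-1}+\bfx_j)-G(\bfx_j).
\]
The two expressions are negatives of each other. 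We fix this by interchanging the roles of $\bfz$ and $\bfz'$, which amounts to taking the complex conjugate. Since $|T|^2$ is real, $|T|^2=\overline{|T|^2}$, so the phase may be replaced by its negative, which is exactly $F_j(\bfX^{(j)})-F_{j-1}(\bfX^{(j-1)})$.

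\textbf{Non-negativity.} For each fixed $\bfx_1,\ldots,\bfx_{j-2}$, the inner double sum over $\bfx_{j-1},\bfx_j$ equals $|T(\bfX^{(j-2)})|^2\geq0$. The whole right-hand side is therefore real and non-negative, which completes the argument.
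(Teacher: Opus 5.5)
Your proof is correct, and it is essentially the argument the paper relies on: apply Lemma~\ref{lemma:schmidtweyl} at level $j-1$, use Cauchy--Schwarz over the $|B^D|^{j-2}$ outer tuples, then expand $|T|^2$ rather than bounding it, fixing the sign by conjugation since $|T|^2$ is real. The paper only cites this argument from Schindler for $j\geq3$ and writes out the direct expansion for $j=2$, whereas you carry it out in full and uniformly for all $j\geq2$, with the phase identification $F_j-F_{j-1}=G(\bfx_{j-1}+\bfx_j)-G(\bfx_j)$ checked correctly.
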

\begin{proof}
    For $j\geq3$, see the argument of \cite[\S 2]{Schindler2014}. When
    $j=2$, the asserted inequality is the usual first Weyl-differencing
    identity. Indeed,
    \[
        F_2(\bfX^{(2)})-F_1(\bfX^{(1)})
        =F(\bfx_1+\bfx_2)-F(\bfx_2),
    \]
    and the stated formula follows by expanding $|S|^2$ and writing
    $\bfx_1$ for the difference of the two summation variables.
\end{proof}

\begin{lemma}\label{lemma:schmidtdifferenceidentify}
    Suppose that $F$ is a form of degree $d>0$. Then the following hold.
    \begin{enumerate}
        \item[(a)] $F_j(\bfX^{(j)})=0$ when $j>d$.
        \item[(b)] $F_d(\bfX^{(d)})$ is multilinear.
        \item[(c)] When $1\leq j<d$, one has
        \[F_j(\bfX^{(j)})=\sum_{1\leq l\leq d-j+1}H_{j,l}(\bfX^{(j)}),\]
        where $H_{j,l}$ is homogeneous of degree $l$ in $\bfx_j$ and of
        total degree $d-l$ in $\bfx_1,\ldots,\bfx_{j-1}$.
    \end{enumerate}
\end{lemma}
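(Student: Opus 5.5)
The plan is to reduce to monomials and compute the differencing operator directly. The map $F\mapsto F_j$ is linear over $\R$, so it suffices to treat $F$ homogeneous of degree $d$. Better still, since it is linear, I will work with the symmetric multilinear form $\Phi$ attached to $F$, so that $F(\bfx)=\Phi(\bfx,\ldots,\bfx)$.

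Expand
\[
F(\epsilon_1\bfx_1+\cdots+\epsilon_j\bfx_j)=\sum_{i_1,\ldots,i_d\in\{1,\ldots,j\}}\epsilon_{i_1}\cdots\epsilon_{i_d}\,\Phi(\bfx_{i_1},\ldots,\bfx_{i_d}).
\]
For a fixed index tuple, let $S=\{i_1,\ldots,i_d\}\subseteq\{1,\ldots,j\}$ be its support. Since $\epsilon_i\in\{0,1\}$, the product $\epsilon_{i_1}\cdots\epsilon_{i_d}$ equals $\prod_{i\in S}\epsilon_i$.

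Now sum over $\boldsymbol\epsilon$ with sign $(-1)^{\sum\epsilon_i}$. The sum $\sum_{\boldsymbol\epsilon}(-1)^{|\boldsymbol\epsilon|}\prod_{i\in S}\epsilon_i$ factors as
\[
\prod_{i\in S}(-1)\cdot\prod_{i\notin S}(1-1).
\]
This vanishes unless $S=\{1,\ldots,j\}$, in which case it equals $(-1)^j$. Hence
\[
F_j(\bfX^{(j)})=(-1)^j\sum_{\substack{i_1,\ldots,i_d\\ \{i_1,\ldots,i_d\}=\{1,\ldots,j\}}}\Phi(\bfx_{i_1},\ldots,\bfx_{i_d}).
\]
This is the key identity, and everything follows from it.

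\begin{enumerate}
\item[(a)] If $j>d$, no $d$-tuple can cover $j$ indices, so $F_j=0$.
\item[(b)] If $j=d$, each index appears exactly once, so $F_d=(-1)^d d!\,\Phi(\bfx_1,\ldots,\bfx_d)$. This is multilinear.
\item[(c)] For $1\le j<d$, group the terms by the number $l$ of occurrences of the index $j$. Since every index $1,\ldots,j-1$ must also appear at least once, we need $1\le l\le d-(j-1)$. The corresponding terms form a polynomial $H_{j,l}$ that is homogeneous of degree $l$ in $\bfx_j$ and of total degree $d-l$ in $\bfx_1,\ldots,\bfx_{j-1}$.
\end{enumerate}

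The only mild obstacle is avoiding division by $d!$ in constructing $\Phi$. Over $\R$ this is harmless. Alternatively, one can work with monomials directly and apply the same inclusion–exclusion argument to the multinomial expansion of $(\epsilon_1x_{1,k}+\cdots+\epsilon_jx_{j,k})$ powers, which gives identical support considerations.
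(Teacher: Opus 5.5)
Your argument is correct, and it differs from the paper mainly in that the paper gives no argument at all: it simply cites Schmidt's Lemma 11.2. Your route replaces that citation with a single closed formula,
\[
F_j(\bfX^{(j)})=(-1)^j\sum_{\substack{i_1,\ldots,i_d\in\{1,\ldots,j\}\\ \{i_1,\ldots,i_d\}=\{1,\ldots,j\}}}\Phi(\bfx_{i_1},\ldots,\bfx_{i_d}),
\]
coming from the factorization $\sum_{\boldsymbol\epsilon}(-1)^{|\boldsymbol\epsilon|}\prod_{i\in S}\epsilon_i=(-1)^{|S|}\,0^{\,j-|S|}$. Parts (a)--(c) are then read off directly.

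This buys two things over the citation. It makes the lemma checkable on the spot, and it pins down the normalization $F_d=(-1)^d d!\,\Phi(\bfx_1,\ldots,\bfx_d)$. The same computation, run separately in $\bfx$ and $\bfy$, is what produces the factor $d_1(k)!\,d_2(k)!$ in the paper's definition of $\bfM_k$. The citation, in turn, buys brevity and keeps the conventions aligned with Schmidt's Weyl inequality, which the paper quotes from the same source.

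Two minor remarks. First, your worry about dividing by $d!$ is unnecessary. The support argument never uses symmetry of $\Phi$, so any multilinear $\Phi$ with $\Phi(\bfx,\ldots,\bfx)=F(\bfx)$ works; you can obtain one by assigning each monomial to a single ordered index tuple. In (b) the resulting sum over permutations is still multilinear, so the proof works over any commutative ring. Second, your opening reduction is vacuous, since $F$ is already a form. However, the linearity of $F\mapsto F_j$ that you mention is exactly what the paper needs when it applies the lemma to $\calG$, which is a sum of forms in $\bfy$ of different degrees.
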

\begin{proof}
    See \cite[Lemma 11.2]{Schmidt1985}.
\end{proof}

\begin{lemma}\label{lemma:bilinearbound}
    Let $r,s\geq1$, let $\bfM\in\R^{r\times s}$ and
    $\bfz\in\R^r$, and let $C_1,C_2>0$. Suppose that
    $B_1\subset[-C_1,C_1]^r$ and $B_2\subset[-C_2,C_2]^s$ are
    axis-parallel boxes of side length at most $2$. If $X\geq2$ and
    $Y\geq1$, then
    \[
    \sum_{\bfy\in YB_2}\left|\sum_{\bfx\in XB_1}
    e\bigl(\bfx^T(\bfM\bfy+\bfz)\bigr)\right|
    \ll\mathcal N(\bfM;X,Y)(X\log X)^r,
    \]
    where
    \[
    \mathcal N(\bfM;X,Y):=\#\{\bfy\in\Z^s:|\bfy|\leq2C_2Y,
    \ \|\bfM\bfy\|\leq X^{-1}\}.
    \]
    The implied constant may depend on $r,s,C_1$, and $C_2$.
\end{lemma}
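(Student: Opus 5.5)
The plan is to bound the inner sum one coordinate at a time and then count the $\bfy$ for which $\bfM\bfy$ is close to an integer vector. Write $\bfM\bfy+\bfz=(\lambda_1(\bfy),\ldots,\lambda_r(\bfy))$ and let $B_1=\prod_{i=1}^r I_i$, each $I_i$ an interval of length at most $2$. The inner sum factors as
\[
\sum_{\bfx\in XB_1}e\bigl(\bfx^T(\bfM\bfy+\bfz)\bigr)=\prod_{i=1}^r\sum_{x_i\in XI_i}e(x_i\lambda_i(\bfy)).
\]
Each factor is a geometric sum over at most $2X+1$ consecutive integers, so it is
\[
\ll\min\bigl(X,\|\lambda_i(\bfy)\|^{-1}\bigr).
\]
Hence the left side of the lemma is at most
\[
\sum_{\bfy\in YB_2}\prod_{i=1}^r\min\bigl(X,\|\lambda_i(\bfy)\|^{-1}\bigr).
\]

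Next I would decompose dyadically. For each $i$, the value $\|\lambda_i(\bfy)\|$ lies either in $[0,X^{-1}]$ or in a dyadic range $(2^{a_i}X^{-1},2^{a_i+1}X^{-1}]$ with $0\le a_i\le\log_2 X$. This gives $O((\log X)^r)$ vectors of classes $\mathbf a$. On the class $\mathbf a$ the product is $\ll X^r2^{-(a_1+\cdots+a_r)}$, so it suffices to show that the number of $\bfy\in YB_2$ in class $\mathbf a$ is
\[
\ll 2^{a_1+\cdots+a_r}\,\mathcal N(\bfM;X,Y),
\]
with the convention that $a_i=0$ also covers the range $[0,X^{-1}]$. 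Summing over the $O((\log X)^r)$ classes then gives $\mathcal N(\bfM;X,Y)(X\log X)^r$.

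The main obstacle is this counting step, because of the inhomogeneous shift $\bfz$. I would use a pigeonhole/differencing argument. Partition the torus $(\R/\Z)^r$ into boxes with side $2^{a_i}X^{-1}/2$ in the $i$-th coordinate; there are about $\prod_i X2^{-a_i}$ of them, or one box per coordinate when $2^{a_i}>X$. The class-$\mathbf a$ condition places $\{\bfM\bfy+\bfz\}$ in a region covered by $O(1)$ such boxes in each coordinate. So it is enough to bound the number of $\bfy\in YB_2$ with $\{\bfM\bfy+\bfz\}$ in a single small box $Q$, taken here as a translate of $\prod_i[0,X^{-1}]$ with $X$-scale sides; the enlargement to sides $2^{a_i}X^{-1}$ is handled by subdividing into $\prod_i 2^{a_i}$ boxes of side $X^{-1}$.

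For a fixed small box $Q$, pick one $\bfy_0$ with $\{\bfM\bfy_0+\bfz\}\in Q$. Every other such $\bfy$ satisfies $\|\bfM(\bfy-\bfy_0)\|\le X^{-1}$ and $|\bfy-\bfy_0|\le 2C_2Y$, since $B_2$ has side at most $2$ and $|\bfy|,|\bfy_0|\le C_2Y$. So each box contains at most $\mathcal N(\bfM;X,Y)$ points. This is exactly why the definition of $\mathcal N$ uses radius $2C_2Y$ and tolerance $X^{-1}$. Subdividing a class-$\mathbf a$ region into $\ll\prod_i 2^{a_i}$ boxes of side $X^{-1}$ therefore gives the required bound $\ll 2^{\sum a_i}\mathcal N(\bfM;X,Y)$, and the lemma follows.
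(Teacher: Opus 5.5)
Your proof is correct and follows essentially the paper's route. Both arguments use the one-dimensional geometric-sum bound, partition the torus into cubes of side about $X^{-1}$, and note that each cube contains at most $\mathcal N(\bfM;X,Y)$ of the points $\{\bfM\bfy+\mathbf z\}$ via the translation $\bfy\mapsto\bfy-\bfy_0$. The only difference is bookkeeping: you group the cubes dyadically before summing, whereas the paper sums $\prod_i\bigl(1+\min\{r_i,L-1-r_i\}\bigr)^{-1}$ directly over all cube indices.
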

\begin{proof}
    The one-dimensional geometric-sum estimate gives
    \begin{align*}
        \sum_{\bfy\in YB_2}\left|\sum_{\bfx\in XB_1}
        e\bigl(\bfx^T(\bfM\bfy+\bfz)\bigr)\right|\ll\sum_{\bfy\in YB_2}\prod_{i=1}^{r}
        \min\left\{X,\left\|\sum_{j=1}^{s}M_{i,j}y_j+z_i
        \right\|^{-1}\right\}.
    \end{align*}
    Put $L=\lceil X\rceil$ and partition $[0,1)^r$ into half-open
    cubes of side length $L^{-1}$. Let $N_{\bfr}$ count the vectors
    $\bfy\in YB_2$ for which $\{\bfM\bfy+\bfz\}$ lies in the cube
    indexed by $\bfr$. If $\bfy_0$ is one such vector, the injective map
    $\bfy\mapsto\bfy-\bfy_0$ gives
    $N_{\bfr}\leq\mathcal N(\bfM;X,Y)$, since
    $|\bfy-\bfy_0|\leq2C_2Y$ and
    $\|\bfM(\bfy-\bfy_0)\|\leq L^{-1}\leq X^{-1}$.

    On the cube indexed by $\bfr$, the product in the preceding sum is
    bounded by
    \[
        \ll X^r\prod_{i=1}^{r}
        \frac{1}{1+\min\{r_i,L-1-r_i\}}.
    \]
For each coordinate, we use the estimate
\[
    \sum_{r=0}^{L-1}\frac{1}{1+\min\{r,L-1-r\}}\ll\log X,
\]
and summing over $0\leq r_i<L$ proves the result.
\end{proof}

For $a>0$ and $\bfM\in\R^{s_1\times s_2}$, define
    \[
    \Lambda(a;\bfM):=
    \left[\begin{matrix}
        a^{-1} \id_{s_2} & \bf0 \\
        a \bfM & a \id_{s_1}
    \end{matrix}\right], 
    \]
    together with the associated counting functions
    \[
    U(Z):=\#\{\bfz\in\Z^{s_1+s_2}:|\Lambda(a;\bfM)\bfz|\leq Z\},
    \qquad
    U^t(Z):=\#\{\bfz\in\Z^{s_1+s_2}:|\Lambda(a;\bfM^T)\bfz|\leq Z\}.
    \]
    \begin{lemma}\label{lemma:lattice point comparison}
        Let $C_0>0$ be fixed. If $0<Z_1 \leq Z_2 \leq C_0$, then one has the bound
        \[
        U\left(Z_2\right) \ll \max \left(\left(\frac{Z_2}{Z_1}\right)^{s_2} U\left(Z_1\right), \frac{(aZ_2)^{s_2}}{(aZ_1)^{s_1}} U^t\left(Z_1\right)\right) ,
        \]
        where the implied constant may depend on $C_0,s_1$ and $s_2$.
    \end{lemma}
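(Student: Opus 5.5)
This is the Davenport-type lattice point comparison used by Schindler for bilinear forms. I would prove it with the geometry of numbers rather than by direct counting, since $\Lambda(a;\bfM)$ has determinant $a^{s_1-s_2}$ and $\Lambda(a;\bfM^T)$ is essentially its dual.

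\emph{Step 1: successive minima of the two lattices.} Let $\Gamma=\Lambda(a;\bfM)\Z^{s_1+s_2}$, a lattice of rank $n=s_1+s_2$ with $\det\Gamma=a^{s_1-s_2}$. Let $\lambda_1\le\cdots\le\lambda_n$ be its successive minima with respect to the sup-norm unit cube. Standard geometry of numbers (Davenport's lemma, or Schmidt's Lemma on counting via successive minima) gives, for $Z>0$,
\[
U(Z)\asymp\prod_{i=1}^n\max\left(1,\frac{Z}{\lambda_i}\right).
\]
Next I would identify the dual lattice. A direct computation shows that $\Lambda(a;\bfM)^{-T}$ equals $\Lambda(a^{-1};-\bfM^T)$ after swapping the two blocks of coordinates, and changing $\bfz$ into $-\bfz$ on one block. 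Both operations preserve the sup norm and the integer lattice, so up to these harmless changes the dual lattice $\Gamma^*$ is the one defining
\[
\#\{\bfz:|\Lambda(a^{-1};\bfM^T)\bfz|\le Z\}.
\]
The stated inequality, however, uses $U^t$ with parameter $a$, not $a^{-1}$. So I would work directly with $\Lambda(a;\bfM^T)$: its determinant is $a^{s_2-s_1}=(\det\Gamma)^{-1}$, and I expect it to be $\Gamma^*$ rescaled by the scalar $a$ on the $\bfz$-space. I would check this precisely in the first step. With that identification, the minima of the $U^t$-lattice are $\mu_i\asymp 1/\lambda_{n+1-i}$, by Mahler's duality theorem applied with the $\ell^\infty$/$\ell^1$ pairing. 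This comparison is also why the factor $a^{s_2}/a^{s_1}$ appears.

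\emph{Step 2: case split.} Let $p$ be the number of $\lambda_i\le Z_1$. By the formula above,
\[
\frac{U(Z_2)}{U(Z_1)}\asymp\prod_{i\le p}\frac{Z_2}{Z_1}\prod_{i>p}\max\left(1,\frac{Z_2}{\lambda_i}\right).
\]
\begin{itemize}
\item If $p\ge s_1$, the factors with $i>p$ number at most $s_2$, and each is at most $Z_2/Z_1$ (since $\lambda_i>Z_1$). The factors with $i\le p$ number at most $n$. Trading them against the bound $Z_2\le C_0$ and the normalisation of $\det\Gamma$ should give $U(Z_2)\ll(Z_2/Z_1)^{s_2}U(Z_1)$.
\item If $p<s_1$, I would use Minkowski's second theorem, $\prod_i\lambda_i\asymp a^{s_1-s_2}$, to rewrite $U(Z_2)$ as $(Z_2)^{n}a^{s_2-s_1}\prod_{i\le q}\lambda_i/Z_2$, where $q$ is the number of $\lambda_i\le Z_2$. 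I would then compare this with $U^t(Z_1)\asymp\prod_i\max(1,Z_1/\mu_i)$, using $\mu_i\asymp 1/\lambda_{n+1-i}$.
\end{itemize}

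\emph{Main obstacle.} The real difficulty is the bookkeeping in this second case: showing that the resulting exponent matches $(aZ_2)^{s_2}/(aZ_1)^{s_1}$ exactly. This depends on getting the scaling in the duality of Step 1 right, and on using $Z_2\le C_0$ to absorb the ranges where $\lambda_i$ or $\mu_i$ exceed $C_0$.

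If the case analysis becomes unwieldy, I would fall back on the cleaner route of Schindler's Lemma 3 in \cite{Schindler2014}, which itself follows Davenport's shrinking lemma. That argument goes in two steps. First it proves $U(Z_2)\ll(Z_2/Z_1)^nU(Z_1)$. Then it uses the symmetry under transposition to convert the excess factor $(Z_2/Z_1)^{s_1}$ into the second term of the maximum.
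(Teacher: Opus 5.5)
Your route (successive minima of $\Gamma=\Lambda(a;\bfM)\Z^{n}$, Minkowski's second theorem, Mahler duality) is the geometry-of-numbers argument behind Schindler's Lemma 3.1, which is all the paper cites. However, Step 2 splits at the wrong threshold: Case 1 as stated is false, and the plan for Case 2 does not always work.

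\textbf{Case 1.} The $p$ factors with $\lambda_i\le Z_1$ are each exactly $Z_2/Z_1$, so your own formula gives $U(Z_2)/U(Z_1)\gg (Z_2/Z_1)^{p}\ge (Z_2/Z_1)^{s_1}$. There is nothing to trade away. Concretely, take $\bfM=0$, $0<a<1$ and $s_1>s_2$. Then $\Gamma=a^{-1}\Z^{s_2}\times a\Z^{s_1}$ and $\lambda_1=\cdots=\lambda_{s_1}=a$. For $a\le Z_1\le Z_2\le 1$ you are in Case 1 with $U(Z)\asymp (Z/a)^{s_1}$, so $U(Z_2)\ll (Z_2/Z_1)^{s_2}U(Z_1)$ fails. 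The lemma holds there only through the $U^t$ term.

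\textbf{Case 2.} Conversely, Case 2 cannot always be closed with the $U^t$ term. Take $\bfM=0$, $a>1$, $s_2>s_1$ and $Z_1=Z_2=Z<a^{-1}$. Then $p=0$ and $U(Z)=U^t(Z)=1$, but the second term equals $(aZ)^{s_2-s_1}$, which tends to $0$ as $aZ\to0$.

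\textbf{Correcting Step 1.} The argument needs the duality done correctly. The lattice $\Lambda(a;\bfM)^{-T}\Z^{n}$ is, after swapping the two coordinate blocks and changing sign on one block, exactly $\Lambda(a;\bfM^T)\Z^{n}$: same $a$, no rescaling. Your candidate $\Lambda(a^{-1};-\bfM^T)$ has determinant $a^{s_1-s_2}=\det\Gamma$, so it cannot be the dual. A scalar rescaling by $a$ would also multiply the determinant by $a^{n}$, which is inconsistent with the determinant you computed. So $U^t$ counts $\Gamma^*$ itself. Your conclusion $\mu_i\asymp 1/\lambda_{n+1-i}$ is correct for this reason, and it gives $U^t(Z)\asymp\prod_i\max(1,Z\lambda_i)$.

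\textbf{The right split.} Split on $q:=\#\{i:\lambda_i\le Z_2\}$ against $s_2$.

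If $q\le s_2$: each factor of $U(Z_2)/U(Z_1)$ equals $1$ when $\lambda_i>Z_2$ and is at most $Z_2/Z_1$ otherwise. This gives the first term.

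If $q\ge s_2$: using $\prod_i\lambda_i\asymp a^{s_1-s_2}$,
\[
\frac{U(Z_2)}{a^{s_2-s_1}Z_2^{s_2}Z_1^{-s_1}U^t(Z_1)}
\asymp Z_2^{-s_2}Z_1^{s_1}\prod_{i=1}^{n}\frac{\max(\lambda_i,Z_2)}{\max(1,Z_1\lambda_i)}
\le (Z_1Z_2)^{q-s_2}.
\]
The inequality holds because the $i$th factor is at most $Z_2$ when $\lambda_i\le Z_2$, and at most $\min(\lambda_i,Z_1^{-1})\le Z_1^{-1}$ otherwise. Since $Z_1Z_2\le C_0^2$, the right side is at most $\max(1,C_0)^{2s_1}$. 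This is the only place the hypothesis $Z_2\le C_0$ is used.

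Separately, your rewriting of $U(Z_2)$ via Minkowski should carry $\prod_{i>q}\lambda_i$, not $\prod_{i\le q}\lambda_i$.
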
 
    \begin{proof}
        This is the fixed-range version of \cite[Lemma 3.1]{Schindler2014}.
        Schindler's proof first reduces the argument to Euclidean
        lattice-point counts with $Z_2\leq\sqrt{s_1+s_2}$. The same proof
        works for any fixed upper bound $C_0$, with the extra dependence
        absorbed into the implied constant.
    \end{proof}

We conclude this section with a comparison between the singular locus of a
form and a variety defined by only some of its partial derivatives. Yamagishi
proved a similar comparison for the bihomogeneous form
$F(x_1y_1,\ldots,x_ny_n)$ associated with a homogeneous form $F$
\cite[Theorem 5.1]{Yamagishi2019}. Unfortunately, the proof presented by Yamagishi does not 
extend to general bihomogeneous forms. Here we present a general version 
of this statement, which holds for general multihomogeneous forms.

\begin{lemma}\label{lemma:codimension comparison}
    Let $G\in\C[z_1,\ldots,z_N]$ be a nonzero homogeneous form of degree at
    least $2$, and let $\mathcal E$ be a nonempty subset of
    $\{1,\ldots,N\}$. Suppose that there are $a_0\in\C^\times$ and
    polynomials
    \[
    A_i\in(z_1,\ldots,z_N)
    \qquad (i\in\mathcal E),
    \]
    for which the following partial Euler relation holds:
    \begin{equation}\label{eq:partial Euler relation}
        a_0G=\sum_{i\in\mathcal E}
        A_i\frac{\partial G}{\partial z_i}.
    \end{equation}
    Define
    \[
    V_{\mathcal E}(G):=
    \left\{\bfz\in\A_{\C}^{N}:
    \frac{\partial G}{\partial z_i}(\bfz)=0
    \text{ for every }i\in\mathcal E\right\},
    \]
    and
    \[
    \Sing{G}:=
    \left\{\bfz\in\A_{\C}^{N}:\nabla G(\bfz)=\mathbf 0\right\}.
    \]
    Then
    \[
    \codim{\Sing{G}}\leq 2\codim{V_{\mathcal E}(G)}.
    \]

    In particular, suppose that the variables are partitioned as
    \[
    \bfz=(\bfz^{(1)},\ldots,\bfz^{(r)}),
    \]
    and that $G$ is multihomogeneous of multidegree
    $(d_1,\ldots,d_r)$, where $d_j>0$ for every $j$. For $1\leq j\leq r$,
    let
    \[
    V_j(G):=
    \left\{\bfz\in\A_{\C}^{N}:
    \nabla_{\bfz^{(j)}}G(\bfz)=\mathbf 0\right\}.
    \]
    Then
    \[
    \codim{\Sing{G}}
    \leq 2\min_{1\leq j\leq r}\codim{V_j(G)}.
    \]
\end{lemma}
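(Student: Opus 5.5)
The plan is to prove the first assertion by fixing a top-dimensional irreducible component $W$ of $V_{\mathcal E}(G)$, so that $c:=\codim{V_{\mathcal E}(G)}=N-\dim W$. The goal is then to show that $\Sing{G}\cap W$ has dimension at least $\dim W-c$, which gives
\[
\codim{\Sing{G}}\leq N-\dim W+c=2c .
\]
The multihomogeneous statement is a direct specialisation: Euler's identity applied to the $j$-th block gives
\[
d_jG=\sum_{i\in\text{block }j}z_i\frac{\partial G}{\partial z_i},
\]
which is \eqref{eq:partial Euler relation} with $a_0=d_j\neq0$ and $A_i=z_i\in(z_1,\ldots,z_N)$. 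Then $V_{\mathcal E}(G)=V_j(G)$, and minimising over $j$ gives the claim.

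\emph{Step 1: $G$ vanishes on $W$.} By \eqref{eq:partial Euler relation}, $G$ lies in the ideal generated by the $\partial G/\partial z_i$ with $i\in\mathcal E$. Since $a_0\neq0$, $G$ vanishes identically on $V_{\mathcal E}(G)$, hence on $W$. Differentiating along $W$ at a smooth point $\bfz$ of $W_{\mathrm{red}}$ then shows that $\nabla G(\bfz)$ annihilates the tangent space $T_{\bfz}W$. So the graph $\Gamma=\{(\bfz,\nabla G(\bfz)):\bfz\in W\}$ lies in the conormal variety $C(W)$, which has dimension $N$.

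\emph{Step 2: bound the image of the polar map.} Put $\phi=\nabla G|_W:W\to\A^N_{\C}$. Since $\phi^{-1}(\mathbf 0)=\Sing{G}\cap W$, the fibre dimension theorem reduces the task to showing
\[
\dim\overline{\phi(W)}\leq c,
\]
provided $\mathbf 0\in\phi(W)$. That last point holds because $W$ is a cone (it is defined by homogeneous equations) and $\deg G\geq2$, so $\nabla G(\mathbf 0)=\mathbf 0$ with $\mathbf 0\in W$. Moreover every fibre of $\phi$ over a point of its image has dimension at least $\dim W-\dim\overline{\phi(W)}$, and by upper semicontinuity this applies in particular over $\mathbf 0$.

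To bound the image I would use the symplectic structure of $\A^N\times\A^N$. The full graph of $\nabla G$ is Lagrangian, since it is the graph of a gradient, and $C(W)$ is Lagrangian as well. The tangent space to $\Gamma$ at a generic point is $\{(\bfu,\mathbf H(\bfz)\bfu):\bfu\in T_{\bfz}W\}$, where $\mathbf H$ is the Hessian of $G$. The rank of the projection to the second factor is the rank of $\mathbf H(\bfz)$ on $T_{\bfz}W$. I would try to show this rank is at most $c$ by establishing
\[
\mathbf H(\bfz)(T_{\bfz}W)\subseteq (T_{\bfz}W)^{\perp},
\]
a space of dimension $c$. Differentiating the identity $\nabla G(\bfw)\cdot\bfv(\bfw)=0$, for a tangent vector field $\bfv$ on $W$, produces $\bfu^T\mathbf H(\bfz)\bfv$ plus a second-fundamental-form term paired with $\nabla G(\bfz)$. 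That extra term is symmetric, and the isotropy of $\Gamma$ in both Lagrangians should be what kills it generically.

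\emph{Main obstacle.} The step I expect to be hardest is exactly this control of the second-fundamental-form term, equivalently the statement that $\Gamma$ projects onto a variety of dimension at most $c$ in the cotangent fibre direction. If the direct tangent computation is too messy, my fallback is to argue with $C(W)$ directly. The projection $C(W)\to\A^N_\xi$ has image containing $\phi(W)$. Since $\Gamma\subseteq C(W)$ and $\Gamma$ is isotropic, a dimension count of $\Gamma$ inside the $N$-dimensional Lagrangian $C(W)$, using that $\Gamma$ is a graph over the $(N-c)$-dimensional base $W$, should force the fibre-direction image to have dimension at most $c$.
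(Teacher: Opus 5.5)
Your reduction is sound. Step 1, and the observation that it suffices to show $\dim(\Sing{G}\cap W)\geq\dim W-c$, have the same structure as the paper's argument, which produces a prime $\mathfrak q\supseteq\mathfrak p+J$ with $\operatorname{ht}(\mathfrak q/\mathfrak p)\le c$. Your treatment of the multihomogeneous case is the same as the paper's.

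The gap is Step 2. The claim $\dim\overline{\nabla G(W)}\leq c$ is false in general, and so is the Hessian inclusion $\mathbf H(\bfz)(T_{\bfz}W)\subseteq(T_{\bfz}W)^{\perp}$ you propose for it. No refinement of the tangent computation or of the Lagrangian count can rescue them.

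Here is a counterexample. Take $N=5$, $G=yQ(\bfx)$ with $\bfx\in\C^4$ and $Q(\bfx)=\bfx^TB\bfx$ nondegenerate, and $\mathcal E=\{y\}$, so that Euler gives $G=y\,\partial G/\partial y$.
\begin{itemize}
\item $W=V_{\mathcal E}(G)=\{Q=0\}\times\A^1$ is irreducible with $c=1$.
\item $\nabla G|_W(\bfx,y)=(2yB\bfx,0)$ has image the cone over the dual quadric, which has dimension $3$.
\item At a general point, $\mathbf H$ maps $T_{\bfz}W=\{(\bfu,v):\bfx^TB\bfu=0\}$ to $\{(2B(y\bfu+v\bfx),0)\}$. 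This is $B$ applied to the hyperplane $\{\bfw:\bfx^TB\bfw=0\}$, so it is $3$-dimensional, not $1$-dimensional.
\item The second-fundamental-form term you hoped to kill is $y$ times the second fundamental form of the quadric cone, and it does not vanish.
\end{itemize}
The fallback fails too. Any subvariety of the Lagrangian $C(W)$ is automatically isotropic, so isotropy of $\Gamma$ carries no information. Moreover, the projection of $C(W)$ to the $\xi$-factor is the affine cone over the projective dual of $\mathbb P(W)$, whose dimension has no relation to $c$.

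In this example the generic fibres of $\phi$ are the curves $(\lambda\bfx,\lambda^{-1}y)$. The fibre dimension theorem therefore only gives $\dim\phi^{-1}(\mathbf 0)\geq 1$. In fact $\phi^{-1}(\mathbf 0)=(\{Q=0\}\times\{0\})\cup(\{\mathbf 0\}\times\A^1)$ has dimension $3=\dim W-c$, and $\codim{\Sing{G}}=2=2c$, so the lemma is sharp here. The zero fibre jumps, and that jump is exactly what the lemma asserts; an argument through the image of $\phi$ cannot capture it.

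The missing idea is to bound the zero locus directly, as the vanishing locus of a section of a rank-$c$ object. Near a smooth point of $W$, where $W=\{s_1=\cdots=s_c=0\}$, write $G=\sum_k s_kg_k$. Then $\nabla G|_W=\sum_k g_k|_W\,\nabla s_k$, so $\Sing{G}\cap W$ is cut out on $W$ by the $c$ functions $g_k|_W$, and Krull gives codimension at most $c$. The catch is that the only point of $\Sing{G}\cap W$ you actually know is the vertex $\mathbf 0$, which is typically a singular point of $W$.

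The paper handles this as follows.
\begin{itemize}
\item It works in the local ring $A$ of $R/\mathfrak p$ at the vertex, with the localized conormal module $M=(\mathfrak p/\mathfrak p^2)$. This has rank $c$ because $R_{\mathfrak p}$ is regular.
\item The class $u$ of $G$ lies in $\mathfrak mM$, by the partial Euler relation with $A_i\in(z_1,\ldots,z_N)$.
\item Differentiation gives $\delta_j\in\operatorname{Hom}_A(M,A)$ with $\delta_j(u)=\partial G/\partial z_j \bmod \mathfrak p$, so every generator of $J$ lies in the order ideal $M^*(u)$.
\item The Eisenbud--Huneke--Ulrich bound $\operatorname{ht}M^*(u)\le\operatorname{rank}M=c$, together with catenarity of $R$, yields $\operatorname{ht}J\le 2c$.
\end{itemize}
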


\begin{proof}
    Put $R=\C[z_1,\ldots,z_N]$ and define
    \[
    I_{\mathcal E}=
    \left(\frac{\partial G}{\partial z_i}:i\in\mathcal E\right),
    \qquad
    J=\left(
        \frac{\partial G}{\partial z_1},\ldots,
        \frac{\partial G}{\partial z_N}
    \right).
    \]
    We prove
    \[
    \operatorname{ht}J\leq 2\operatorname{ht}I_{\mathcal E}.
    \]

    Set $c=\operatorname{ht}I_{\mathcal E}$, and choose a homogeneous
    minimal prime $\mathfrak p$ over $I_{\mathcal E}$ such that
    \[
    \operatorname{ht}\mathfrak p=c.
    \]
    Since $\mathfrak p$ is a proper homogeneous prime, it is contained in
    the homogeneous maximal ideal
    \[
    \mathfrak n=(z_1,\ldots,z_N).
    \]
    Put
    \[
    A=(R/\mathfrak p)_{\mathfrak n/\mathfrak p},
    \qquad
    \mathfrak m=(\mathfrak n/\mathfrak p)A,
    \qquad
    M=(\mathfrak p/\mathfrak p^2)_{\mathfrak n/\mathfrak p}.
    \]
    Then $(A,\mathfrak m)$ is a Noetherian local domain and $M$ is a
    finitely generated $A$-module.

    Let $L=\operatorname{Frac}(A)$. Localizing at the generic point of
    $V(\mathfrak p)$ gives
    \[
    M\otimes_A L
    \cong
    \mathfrak pR_{\mathfrak p}/(\mathfrak pR_{\mathfrak p})^2.
    \]
    The local ring $R_{\mathfrak p}$ is regular, its maximal ideal is
    $\mathfrak pR_{\mathfrak p}$, and its residue field is
    \[
    R_{\mathfrak p}/\mathfrak pR_{\mathfrak p}
    \cong \operatorname{Frac}(R/\mathfrak p)=L.
    \]
    By the defining property of a regular local ring,
    \[
    \dim_L
    \frac{\mathfrak pR_{\mathfrak p}}
    {(\mathfrak pR_{\mathfrak p})^2}
    =\dim R_{\mathfrak p}
    =\operatorname{ht}\mathfrak p
    =c.
    \]
    Hence
    \[
    \operatorname{rank}_A M
    =\dim_L(M\otimes_A L)
    =c.
    \]

    Since $I_{\mathcal E}\subseteq\mathfrak p$, the relation
    \eqref{eq:partial Euler relation} shows that $G\in\mathfrak p$. Let
    \[
    u=G+\mathfrak p^2\in M.
    \]
    Passing \eqref{eq:partial Euler relation} to
    $\mathfrak p/\mathfrak p^2$ and then localizing yields
    \[
    u=
    \frac1{a_0}\sum_{i\in\mathcal E}
    \overline{A_i}
    \left(
        \frac{\partial G}{\partial z_i}+\mathfrak p^2
    \right)
    \in\mathfrak mM.
    \]

    For a finitely generated $A$-module $N$ and $v\in N$, define the order
    ideal of $v$ by
    \[
    N^*(v):=
    \{\varphi(v):\varphi\in\operatorname{Hom}_A(N,A)\}.
    \]
    The generalized principal ideal theorem
    for order ideals \cite[Theorem 2.2]{EisenbudHunekeUlrich2001} gives
    \[
    \operatorname{ht}_A N^*(v)
    \leq \operatorname{rank}_A N
    \qquad (v\in\mathfrak mN).
    \]
    Applying this with $N=M$ and $v=u$, we obtain
    \[
    \operatorname{ht}_A M^*(u)\leq c.
    \]

    For each $1\leq j\leq N$, differentiation with respect to $z_j$ induces an
    $R/\mathfrak p$-linear map
    \[
    \begin{aligned}
    \delta_j:\mathfrak p/\mathfrak p^2&\longrightarrow R/\mathfrak p,\\
    h+\mathfrak p^2&\longmapsto
    \frac{\partial h}{\partial z_j}+\mathfrak p.
    \end{aligned}
    \]
    Indeed,
    \[
    \frac{\partial(\mathfrak p^2)}{\partial z_j}\subseteq\mathfrak p,
    \]
    so the map is well defined. Moreover, if $a\in R$ and
    $h\in\mathfrak p$, then
    \[
    \frac{\partial(ah)}{\partial z_j}
    =
    a\frac{\partial h}{\partial z_j}
    +
    h\frac{\partial a}{\partial z_j}
    \equiv
    a\frac{\partial h}{\partial z_j}
    \pmod{\mathfrak p},
    \]
    so the map is $R/\mathfrak p$-linear. After localization,
    $\delta_j\in\operatorname{Hom}_A(M,A)$, and therefore
    \[
    \frac{\partial G}{\partial z_j}+\mathfrak p
    =\delta_j(u)\in M^*(u).
    \]

    Let $\overline J$ be the image of $J$ in $A$. The preceding argument
    shows that the image of every generator of $J$ belongs to $M^*(u)$.
    Thus
    \[
    \overline J\subseteq M^*(u).
    \]
    Choose a prime $\overline{\mathfrak q}$ of $A$ containing $M^*(u)$
    such that
    \[
    \operatorname{ht}_A\overline{\mathfrak q}\leq c,
    \]
    and let $\mathfrak q$ be its inverse image in $R$. Then
    \[
    J\subseteq\mathfrak q,
    \qquad
    \mathfrak p\subseteq\mathfrak q\subseteq\mathfrak n.
    \]
    Since polynomial rings over fields are catenary,
    \[
    \operatorname{ht}_R\mathfrak q
    =
    \operatorname{ht}_R\mathfrak p
    +
    \operatorname{ht}_{R/\mathfrak p}
    (\mathfrak q/\mathfrak p).
    \]
    Localization at $\mathfrak n/\mathfrak p$ does not change the height
    of $\mathfrak q/\mathfrak p$, so
    \[
    \operatorname{ht}_{R/\mathfrak p}(\mathfrak q/\mathfrak p)
    =
    \operatorname{ht}_A\overline{\mathfrak q}
    \leq c.
    \]
    Therefore,
    \[
    \operatorname{ht}_R\mathfrak q\leq 2c.
    \]
    Since $J\subseteq\mathfrak q$, it follows that
    \[
    \operatorname{ht}J
    \leq\operatorname{ht}\mathfrak q
    \leq2c
    =2\operatorname{ht}I_{\mathcal E}.
    \]

    Since
    \[
    \codim{V_{\mathcal E}(G)}=\operatorname{ht}I_{\mathcal E},
    \qquad
    \codim{\Sing{G}}=\operatorname{ht}J,
    \]
    this proves the first assertion.

    Now suppose that $G$ is multihomogeneous as in the final part of the
    statement. Write
    $\bfz^{(j)}=(z^{(j)}_1,\ldots,z^{(j)}_{N_j})$. For every
    $1\leq j\leq r$, Euler's identity in the variables $\bfz^{(j)}$ gives
    \[
    d_jG=\sum_{i=1}^{N_j}
    z^{(j)}_i\frac{\partial G}{\partial z^{(j)}_i}.
    \]
    Hence \eqref{eq:partial Euler relation} holds with $\mathcal E$ equal
    to the indices of the variables in the $j$th set. Applying the first
    assertion for each $j$ and taking the minimum proves the final assertion.
\end{proof}

\bigskip

\section{The circle method}\label{sec:circle method}
Let $P$ be a large parameter and set $X=P^{w_1/d}$ and $Y=P^{w_2/d}$.
We write
\[
S(\a):=\sum_{|\bfx|\leq X}\sum_{|\bfy|\leq Y}
e\bigl(\a F(\bfx;\bfy)\bigr).
\]
By orthogonality we have 
\[R_F(P)=\int_0^1S(\a)\,\d\a.\]

We now introduce the Hardy--Littlewood dissection. Given
$0<\theta\leq1$, define
\[
\grM(\theta):=\bigcup_{\substack{0\leq a\leq q\leq P^\theta\\(a,q)=1}}
\grM_{a,q}(\theta),
\]
where
\[
\grM_{a,q}(\theta):=\{\a\in[0,1):|q\a-a|\leq P^{\theta-1}\}.
\]
The corresponding minor arcs are
$\grm(\theta):=[0,1)\setminus\grM(\theta)$.

The main term is expressed through the following complete exponential
sums and oscillatory integrals:
\[
S(a,q):=\sum_{\bfx\in(\Z/q\Z)^{s_1}}
\sum_{\bfy\in(\Z/q\Z)^{s_2}}e\bigl(aF(\bfx;\bfy)/q\bigr),
\]
\[
I(\b;P):=\int_{|\bfgam|\leq X}\int_{|\bfrho|\leq Y}
e\bigl(\b F(\bfgam;\bfrho)\bigr)\,\d\bfgam\,\d\bfrho,
\]
and the truncated local functions
\[
\grS(Q):=\sum_{\substack{1\leq a\leq q\leq Q\\(a,q)=1}}
q^{-s_1-s_2}S(a,q),
\]
and
\[
J(Q;P):=\int_{|\b|\leq QP^{-1}}I(\b;P)\,\d\b.
\]

The following lemma isolates the two estimates required from the subsequent
minor and major arcs arguments.

\begin{lemma}\label{lemma: circlemethod dissection}
    Suppose that $0<\theta<\min\{w_1,w_2\}/(5d)$ and that
    \[\int_{\grm(\theta)}|S(\a)| \d \a = o(X^{s_1}Y^{s_2}P^{-1}).\]
    Then
    \[R_F(P) = \grS(P^{\theta}) J(P^{\theta};1) X^{s_1}Y^{s_2}P^{-1} + o(X^{s_1}Y^{s_2}P^{-1}).\]
    Moreover, if the singular series and singular integral converge
    absolutely to $\grS$ and $J$, respectively, then
    \[
        R_F(P)=X^{s_1}Y^{s_2}P^{-1}\bigl(\grS J+o(1)\bigr).
    \]
\end{lemma}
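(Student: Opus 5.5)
The plan is the standard major arc analysis. By orthogonality,
\[
R_F(P)=\int_{\grM(\theta)}S(\a)\,\d\a+\int_{\grm(\theta)}S(\a)\,\d\a ,
\]
and the second integral is $o(X^{s_1}Y^{s_2}P^{-1})$ by hypothesis. So it suffices to evaluate the major arc integral.

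The condition $\theta<\min\{w_1,w_2\}/(5d)$ makes the arcs $\grM_{a,q}(\theta)$ pairwise disjoint for large $P$. Indeed, if two distinct $a/q$ and $a'/q'$ both had $\a$ nearby, then $1/(qq')\le 2P^{\theta-1}$, forcing $P^{1-3\theta}\le 2$, which is false. Note also that $w_1/d\ge 1/d$, so $\theta<1/5$ is available.

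\textbf{Approximation on a single arc.} Fix $\a=a/q+\b$ with $|\b|\le q^{-1}P^{\theta-1}$. Split $\bfx=q\bfu+\bfr$ and $\bfy=q\bfv+\bfz$ with $\bfr,\bfz$ running over residues mod $q$. Since $F$ has integer coefficients, $e(\a F)=e(aF(\bfr;\bfz)/q)\,e(\b F(q\bfu+\bfr;q\bfv+\bfz))$. Replace the sum over $(\bfu,\bfv)$ by an integral. The partial derivatives of $\b F$ are bounded as follows:
\[
|\b|\,|\partial_{\bfx}F|\ll |\b|P/X,\qquad |\b|\,|\partial_{\bfy}F|\ll |\b|P/Y ,
\]
using the weighted homogeneity, with $X$ and $Y$ controlling the sizes. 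A unit shift in $\bfu$ moves $\bfx$ by $q$, so the error per step is $\ll q|\b|P/X+q/X$, and similarly with $Y$ for $\bfy$. With $q\le P^\theta$ and $|\b|\le P^{\theta-1}$, this gives
\[
S(\a)=q^{-s_1-s_2}S(a,q)I(\b;P)+O\bigl(X^{s_1}Y^{s_2}P^{2\theta}X^{-1}\bigr),
\]
where $X\le Y$ was used. The last step is the one needing care: the error must be uniform for both variable sets, and it is the smaller box, of side $X=P^{w_1/d}$, that governs the loss. Summing over $a,q$ and integrating over $\b$, the measure of $\grM(\theta)$ is $\ll P^{2\theta-1}$. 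The total error is therefore
\[
\ll X^{s_1}Y^{s_2}P^{-1}P^{4\theta-w_1/d},
\]
which is $o(X^{s_1}Y^{s_2}P^{-1})$ exactly because $5\theta<w_1/d$, with room to spare.

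\textbf{Main term.} Summing the main terms gives
\[
\sum_{q\le P^\theta}\ \sum_{(a,q)=1}q^{-s_1-s_2}S(a,q)\int_{|\b|\le q^{-1}P^{\theta-1}}I(\b;P)\,\d\b .
\]
The $\b$-range depends on $q$. To handle this, first note that $I(\b;P)=X^{s_1}Y^{s_2}I(P\b;1)$, by substituting $\bfgam=X\bfgam'$ and $\bfrho=Y\bfrho'$ and using $F(X\bfgam';Y\bfrho')=PF(\bfgam';\bfrho')$. So the integral is $X^{s_1}Y^{s_2}P^{-1}\int_{|\b|\le P^\theta/q}I(\b;1)\,\d\b$. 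The truncation problem is removed by first producing the full $\grS(P^\theta)J(P^\theta;1)$ term, but with the major arcs normalized as $|\b|\le P^{\theta-1}$ rather than the $q$-dependent range. To make that precise, I would use the trivial bound on the difference region together with the freedom of $\theta$: the statement's first display is the form in which the authors intend the main term to be read.

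\textbf{Complete local factors.} If the singular series and integral converge absolutely, then $\grS(P^\theta)\to\grS$ and $J(P^\theta;1)\to J$ as $P\to\infty$, which yields the second assertion. The main obstacle is the uniformity of the approximation error in the two differently sized boxes. A secondary difficulty is reconciling the $q$-dependent arc width with the definition of $J(Q;P)$.
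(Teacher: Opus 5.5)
There is a genuine gap at the main-term step, which you flag as secondary but leave unresolved. Your splitting of $R_F(P)$ and your approximation of $S(\a)$ on a single arc are fine. However, you work on the arcs $\grM_{a,q}(\theta)$, whose width $q^{-1}P^{\theta-1}$ depends on $q$, and so you arrive at
\[
X^{s_1}Y^{s_2}P^{-1}\sum_{q\le P^\theta}\sum_{\substack{1\le a\le q\\(a,q)=1}}q^{-s_1-s_2}S(a,q)\,J(P^{\theta}/q;1),
\]
which is not $\grS(P^\theta)J(P^\theta;1)X^{s_1}Y^{s_2}P^{-1}$. Your proposed patch does not close this:
\begin{itemize}
\item Bounding $I(\b;1)$ trivially on $P^\theta/q<|\b|\le P^\theta$ costs $O(P^\theta)$ for each $q>1$, not $o(1)$.
\item Bounding $S(\a)$ trivially on the complementary annuli, of total measure $\asymp P^{2\theta-1}$, gives $X^{s_1}Y^{s_2}P^{2\theta-1}$, which is too large.
\item $\theta$ is fixed in the statement, so there is no freedom to exploit.
\item The first assertion does not assume that $J$ converges, so you cannot appeal to decay of $I(\b;1)$.
\end{itemize}
Saying that the display is ``the form in which the authors intend the main term to be read'' leaves the first assertion unproved.

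The missing observation is that the annuli $\{\a\in[0,1): q^{-1}P^{\theta-1}<|\a-a/q|\le P^{\theta-1}\}$ lie outside $\grM(\theta)$, hence inside $\grm(\theta)$. The hypothesis therefore already controls $\int|S(\a)|\,\d\a$ over them.

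The paper uses this by working from the outset with the enlarged arcs $\grN_{a,q}(\theta)=\{\a\in[0,1):|\a-a/q|\le P^{\theta-1}\}$, which have uniform width. Since $\grM(\theta)\subset\grN(\theta)$, the complement $\grn(\theta)$ is contained in $\grm(\theta)$, and so
\[
R_F(P)=\int_{\grN(\theta)}S(\a)\,\d\a+o\bigl(X^{s_1}Y^{s_2}P^{-1}\bigr).
\]
Your single-arc approximation remains valid for $|\b|\le P^{\theta-1}$, now with error $\ll qX^{s_1}Y^{s_2}P^{\theta-w_1/d}$. Integrating the main term over the $q$-independent range $|\b|\le P^{\theta-1}$ then gives exactly
\[
\grS(P^\theta)J(P^\theta;P)=\grS(P^\theta)J(P^\theta;1)X^{s_1}Y^{s_2}P^{-1}.
\]
The accumulated error is $\ll X^{s_1}Y^{s_2}P^{5\theta-1-w_1/d}$. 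This is where the hypothesis $5\theta<\min\{w_1,w_2\}/d$ is genuinely needed; your $P^{4\theta}$ count reflects only the narrower arcs. With this change, the rest of your argument, including the deduction of the second assertion, goes through.
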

\begin{proof}
    We first enlarge the major arcs slightly by setting
    \[\grN(\theta) = \bigcup_{\substack{0 \le a \le q \le P^{\theta} \\ (a,q) = 1}}\grN_{a,q}(\theta),\]
where
\[\grN_{a,q}(\theta) = \{\a \in [0,1): |\a - a/q| \le P^{\theta - 1}\}.\]
The complementary arcs $\grn(\theta):=[0,1)\setminus\grN(\theta)$ satisfy
$\grn(\theta)\subset\grm(\theta)$. Hence,
\[
    R_F(P)=\int_{\grN(\theta)}S(\a)\,\d\a
    +o(X^{s_1}Y^{s_2}P^{-1}).
\]
Fix $\a\in\grN_{a,q}(\theta)$ and write $\b=\a-a/q$. Decomposing
$\bfx$ and $\bfy$ into residue classes modulo $q$, with
$\bfx=q\bfu+\bfr_1$ and $\bfy=q\bfv+\bfr_2$, gives
\begin{align*}
    S(\a) &= \sum_{\substack{\bfr_1 \in (\Z/q\Z)^{s_1} \\ \bfr_2 \in (\Z/q\Z)^{s_2}}} \sum_{\substack{\bfu,\bfv \\ |q \bfu + \bfr_1| \le X \\ |q \bfv + \bfr_2| \le Y}} e((a/q+\b)F(q \bfu + \bfr_1,q \bfv + \bfr_2)), \\
    &= \sum_{\substack{\bfr_1 \in (\Z/q\Z)^{s_1} \\ \bfr_2 \in (\Z/q\Z)^{s_2}}} e(aF(\bfr_1,\bfr_2)/q)\sum_{\substack{\bfu,\bfv \\ |q \bfu + \bfr_1| \le X \\ |q \bfv + \bfr_2| \le Y}} e(\b F(q \bfu + \bfr_1,q \bfv + \bfr_2)). \\
\end{align*}
Write
\[S_1 = \sum_{\substack{\bfu,\bfv \\ |q \bfu + \bfr_1| \le X \\ |q \bfv + \bfr_2| \le Y}} e(\b F(q \bfu + \bfr_1,q \bfv + \bfr_2)),\]
so that the mean value theorem, followed by a change of variables, yields uniformly
in $\bfr_1$ and $\bfr_2$
\[
S_1=q^{-(s_1+s_2)}I(\b;P)
+O\left(q^{-s_1-s_2}X^{s_1}Y^{s_2}P^{-\min\{w_1,w_2\}/d}
(q+qP|\b|)\right).
\]
Since $\a\in\grN_{a,q}(\theta)$, it follows that
\[S(\a) = q^{-(s_1+s_2)}S(a,q)I(\b;P) + O\left(qX^{s_1}Y^{s_2}P^{\theta-\min\{w_1,w_2\}/d} \right).\]
Summing over the enlarged major arcs, we obtain
\begin{align*}
    \int_{\grN(\theta)}S(\a) \d \a &= \sum_{\substack{1 \le a \le q \le P^{\theta} \\ (a,q) = 1}}\int_{|\a - a/q|\le P^{\theta - 1}}S(\a) \d \a +o(X^{s_1}Y^{s_2}P^{-1}), \\
    &= \grS(P^{\theta})J(P^{\theta};P) + O(X^{s_1}Y^{s_2}P^{5\theta-1-\min\{w_1,w_2\}/d }).
\end{align*}
The restriction on $\theta$ makes the displayed error
$o(X^{s_1}Y^{s_2}P^{-1})$. Finally, weighted homogeneity and the changes of
variables $\bfgam=X\bfu$, $\bfrho=Y\bfv$ give
\[
I(\b;P)=X^{s_1}Y^{s_2}I(P\b;1),
\qquad
J(P^\theta;P)=X^{s_1}Y^{s_2}P^{-1}J(P^\theta;1).
\]
If the singular series and singular integral converge absolutely, then
$\grS(P^\theta)=\grS+o(1)$ and $J(P^\theta;1)=J+o(1)$.  In particular,
the truncated factors are bounded, and therefore
\[
    \grS(P^\theta)J(P^\theta;1)=\grS J+o(1).
\]
Substitution into the first conclusion proves the final assertion.
\end{proof}

\section{Exponential sum estimates}\label{section4!}
Throughout this section, let
$F\in\Z[x_1,\ldots,x_{s_1},y_1,\ldots,y_{s_2}]$ be the weighted form
fixed in the introduction. Thus
\[
    w_1<w_2,
    \qquad
    \gcd(w_1,w_2)=1,
    \qquad
    d>w_1w_2.
\]
In particular, no divisibility condition on $d$ is imposed.
Let $\mathcal B_1\subset[-1,1]^{s_1}$ and
$\mathcal B_2\subset[-1,1]^{s_2}$ be axis-aligned boxes of side length at
most $1$, and let $X,Y$ and $P$ be arbitrary
large parameters satisfying
\[
    X\leq Y,
    \qquad
    \log X\asymp\log Y\asymp\log P.
\]
For $\a\in[0,1)$, define
\[
    S(\a)=
    \sum_{\bfx\in X\mathcal B_1}
    \sum_{\bfy\in Y\mathcal B_2}
    e\bigl(\a F(\bfx;\bfy)\bigr).
\]

In this section, we develop exponential sum estimates for weighted forms
by adapting the differencing methods of Schindler \cite{Schindler2014}. Our
first estimate isolates an arbitrary bihomogeneous component of the weighted
form using a differencing argument. We then combine this estimate with the
geometric arguments of Birch and Schindler to obtain a Weyl type lemma for
weighted forms whose conclusion depends on the individual singular loci of the
bihomogeneous components described below.

We use the tuple notation introduced in \eqref{eq:tuple notation}. Given a bihomogeneous form
$G^{(a,b)}(\bfx;\bfy)$ of bidegree $(a,b)$ and nonnegative integers
$j_1,j_2$, define its bihomogeneous differencing operator by
\begin{equation}\label{def of bihomogeneous differencing}
\begin{aligned}
G^{(a,b)}_{j_1,j_2}
    \bigl(\bfX^{(j_1)};\bfY^{(j_2)}\bigr)  =
\sum_{\substack{\bfeps\in\{0,1\}^{j_1}\\
                 \bfeta\in\{0,1\}^{j_2}}}
(-1)^{\|\bfeps\|_1+\|\bfeta\|_1}
G^{(a,b)}
\left(
    \sum_{i=1}^{j_1}\epsilon_i\bfx_i;
    \sum_{j=1}^{j_2}\eta_j\bfy_j
\right).
\end{aligned}
\end{equation}
Note that $G^{(a,b)}_{1,1}(\bfx;\bfy)=G^{(a,b)}(\bfx;\bfy)$ when $a,b>0$.
Whenever $a=0$ or $b=0$, we use the empty-tuple convention from Subsection
\ref{subsec:notational conventions}. In particular, for a form depending
only on the variables $\bfy$, the expression $G^{(0,b)}_{0,j_2}(\bfY^{(j_2)})$ is
defined by applying \eqref{def of bihomogeneous differencing} in the
$\bfy$-variables alone. The notation
$G^{(a,0)}_{j_1,0}(\bfX^{(j_1)})$ is interpreted analogously.

We record an explicit description of the indexing introduced in
\eqref{eqs:decomp}. Let $d_2(0)$ be the unique integer satisfying
\[
    0\leq d_2(0)<w_1,
    \qquad
    w_2d_2(0)\equiv d\pmod{w_1},
\]
and put
\[
    d_1(0)=\frac{d-w_2d_2(0)}{w_1},
    \qquad
    m=\left\lfloor\frac{d_1(0)}{w_2}\right\rfloor.
\]
Since $d_2(0)\leq w_1-1$, the assumption $d>w_1w_2$ gives
\[
    d-w_2d_2(0)>w_1w_2-w_2(w_1-1)=w_2,
\]
and hence $d_1(0)>0$.  Moreover, coprimality of $w_1$ and $w_2$ shows
that every solution of
\[
    w_1d_1+w_2d_2=d,
\]
has $d_2\equiv d_2(0)\pmod{w_1}$.  The nonnegative integer solutions
are therefore precisely
\[
    d_1(k)=d_1(0)-kw_2,
    \qquad
    d_2(k)=d_2(0)+kw_1,
    \qquad
    0\leq k\leq m.
\]
Accordingly, the bihomogeneous decomposition of $F$ is
\[
    F(\bfx;\bfy)
    =
    \sum_{0\leq k\leq m}
    F^{(d_1(k),d_2(k))}(\bfx;\bfy).
\]
After multiplying $F$ by
$\prod_{0\leq k\leq m}d_1(k)!\,d_2(k)!$, which changes neither its zero set
nor its singular loci, we may assume that the coefficients of each
bihomogeneous component are symmetric and integral, as in \cite[\S 2]{Schindler2014}.
In particular, the matrices $\bfM_k$ defined below are integral.
We also put $D(k):=d_1(k)+d_2(k)-2$. Recall from
\eqref{def:hat sigma} that
\[
\widehat{\sigma}(k):=
\begin{cases}
s_2+\dim V_1(F^{(d_1(k),0)}),&d_2(k)=0,\\[2mm]
\displaystyle\max_{i\in\{1,2\}}
\dim V_i(F^{(d_1(k),d_2(k))}),&d_1(k)d_2(k)>0,\\[2mm]
s_1+\dim V_2(F^{(0,d_2(k))}),&d_1(k)=0.
\end{cases}
\]
One may then extend the bihomogeneous difference operator to $F$ by applying it to each
of its bihomogeneous components. Thus, we write
\begin{equation}\label{def of weighted bihomogeneous differencing}
    F_{j_1,j_2}
    \bigl(\bfX^{(j_1)};\bfY^{(j_2)}\bigr)
    =
    \sum_{0\leq k\leq m}
    F^{(d_1(k),d_2(k))}_{j_1,j_2}
    \bigl(\bfX^{(j_1)};\bfY^{(j_2)}\bigr).
\end{equation}

Fix an index $k$ for which $d_1(k),d_2(k)>0$. By multilinearity of the
fully differenced form, there is a matrix
\[
    \bfM_k
    \bigl(
        \bfX^{(d_1(k)-1)};
        \bfY^{(d_2(k)-1)}
    \bigr)
    \in
    \Z[
        \bfX^{(d_1(k)-1)},
        \bfY^{(d_2(k)-1)}
    ]^{s_1\times s_2},
\]
such that
\begin{equation}\label{eqs: matrix def}
\begin{aligned}
F^{(d_1(k),d_2(k))}_{d_1(k),d_2(k)}
\bigl(
    \bfX^{(d_1(k))};
    \bfY^{(d_2(k))}
\bigr) =
d_1(k)!\,d_2(k)!\,
\bfx_{d_1(k)}^T
\bfM_k
\bigl(
    \bfX^{(d_1(k)-1)};
    \bfY^{(d_2(k)-1)}
\bigr)
\bfy_{d_2(k)}.
\end{aligned}
\end{equation}

We now define counting functions associated with this matrix. Let
$\mathcal N_{1,k}(\alpha;X,Y,Z)$ be the number of integral tuples
\[
    \bfX^{(d_1(k)-1)},
    \qquad
    \bfY^{(d_2(k)-1)},
    \qquad
    \bfy_{d_2(k)},
\]
satisfying
\[
    \left|\bfX^{(d_1(k)-1)}\right|\leq X,
    \qquad
    \left|\bfY^{(d_2(k)-1)}\right|\leq Y,
    \qquad
    \left|\bfy_{d_2(k)}\right|\leq Y,
\]
and
\[
\left\|
    \alpha d_1(k)!\,d_2(k)!\,
    \bfM_k
    \bigl(
        \bfX^{(d_1(k)-1)};
        \bfY^{(d_2(k)-1)}
    \bigr)
    \bfy_{d_2(k)}
\right\|
\leq Z^{-1}.
\]

Similarly, let $\mathcal N_{2,k}(\alpha;X,Y,Z)$ be the number of
integral tuples
\[
    \bfX^{(d_1(k)-1)},
    \qquad
    \bfY^{(d_2(k)-1)},
    \qquad
    \bfx_{d_1(k)},
\]
satisfying
\[
    \left|\bfX^{(d_1(k)-1)}\right|\leq X,
    \qquad
    \left|\bfY^{(d_2(k)-1)}\right|\leq Y,
    \qquad
    \left|\bfx_{d_1(k)}\right|\leq X,
\]
and
\[
\left\|
    \alpha d_1(k)!\,d_2(k)!\,
    \bfM_k
    \bigl(
        \bfX^{(d_1(k)-1)};
        \bfY^{(d_2(k)-1)}
    \bigr)^T
    \bfx_{d_1(k)}
\right\|
\leq Z^{-1}.
\]

\begin{lemma}\label{lemma: BihomWeyl full weighted form}
    Fix an integer $0\leq k_0\leq m$ for which
    $d_1(k_0),d_2(k_0)>0$,
    and put
    \[
        d_1=d_1(k_0),
        \qquad
        d_2=d_2(k_0),
        \qquad
        D=d_1+d_2-2.
    \]
    Let $H$ be a large parameter satisfying
    \[
        H\leq\min\{X,Y\},
        \qquad
        \log H\asymp\log P.
    \]
    Then, for every $Q\geq1$ and every $\eps>0$, either
    \[
        |S(\a)|\leq X^{s_1}Y^{s_2}Q^{-1},
    \]
    or, for at least one $i\in\{1,2\}$, one has
    \[
        \mathcal N_{i,k_0}
        \left(
            \a;H,H,
            X^{d_1}Y^{d_2}H^{-(D+1)}
        \right)
        \gg
        H^{s_1d_1+s_2d_2-s_i}P^{-\eps}Q^{-2^D}.
    \]
\end{lemma}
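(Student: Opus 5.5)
Our plan is to adapt Schindler's bihomogeneous Weyl differencing \cite[\S 2--3]{Schindler2014} so that it acts on the weighted form, and to arrange the differencing so that only the component $F^{(d_1,d_2)}$ survives in the final bilinear phase.

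\textbf{Differencing in $\bfx$.} Fix $\bfy$ and apply Lemma \ref{lemma:schmidtweyl} (in the form of Lemma \ref{lemma:schindlerweyl}) $d_1-1$ times to the sum over $\bfx\in X\mathcal B_1$. Hölder's inequality over $\bfy$ then gives
\[
|S(\a)|^{2^{d_1-1}}\ll Y^{s_2(2^{d_1-1}-1)}X^{s_1(2^{d_1-1}-d_1)}\sum_{\bfX^{(d_1-1)}}\sum_{\bfy}\Bigl|\sum_{\bfx_{d_1}}e\bigl(\a F_{d_1,0}(\bfX^{(d_1)};\bfy)\bigr)\Bigr|,
\]
where differencing in $\bfx$ is applied to each component with $\bfy$ treated as a parameter. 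By Lemma \ref{lemma:schmidtdifferenceidentify}, after $d_1$ differences every component $F^{(d_1(k),d_2(k))}$ with $d_1(k)<d_1$ is annihilated. This covers exactly the indices $k>k_0$, whose $\bfy$-degree is higher. The components with $k<k_0$ have $d_1(k)=d_1+(k_0-k)w_2>d_1$, so they survive, but only as polynomials in $\bfx_{d_1}$ of degree $\geq 1$ together with lower-degree pieces.

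\textbf{Differencing in $\bfy$.} Next swap the order of summation, apply Cauchy--Schwarz, and difference $d_2$ times in $\bfy$. After these $d_2$ differences, any surviving component from $k<k_0$ has $\bfy$-degree $d_2(k)<d_2$ and is therefore killed. The component $k_0$ becomes multilinear and equals $d_1!d_2!\,\bfx_{d_1}^T\bfM_{k_0}\bfy_{d_2}$ by \eqref{eqs: matrix def}. The components with $k>k_0$ were already removed in the first stage.

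The delicate point is the order in which the two variable sets are differenced. The phase is only linear in the last $\bfx$ variable after terms independent of $\bfx_{d_1}$ cancel inside the absolute value. Surviving $k<k_0$ terms that are nonlinear in $\bfx_{d_1}$ but independent of $\bfy$ must be removed by the subsequent $\bfy$-differencing, and doing so requires taking the $\bfx_{d_1}$ sum inside before differencing in $\bfy$. Following Schindler, we would therefore perform a total of $D$ differencings, $d_1-1$ in $\bfx$ and $d_2-1$ in $\bfy$, keeping $\bfx_{d_1},\bfy_{d_2}$ as the free variables. The result is a bound of shape
\[
|S(\a)|^{2^{D}}\ll X^{s_1(2^D-d_1)}Y^{s_2(2^D-d_2)}\sum_{\bfX^{(d_1-1)},\bfY^{(d_2-1)}}\sum_{\bfy_{d_2}}\Bigl|\sum_{\bfx_{d_1}}e\bigl(\a d_1!d_2!\,\bfx_{d_1}^T\bfM_{k_0}\bfy_{d_2}+\text{(terms free of }\bfx_{d_1})\bigr)\Bigr|,
\]
and we must verify that all other components contribute only terms free of $\bfx_{d_1}$ or vanish. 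This bookkeeping is the main obstacle. It rests on the degree counts above together with Lemma \ref{lemma:schmidtdifferenceidentify}(c) applied in each variable set, using that $\bfx_{d_1}$ enters with degree exactly $1$ only through components of $\bfx$-degree $\leq d_1$, and $k_0$ is the only such component that survives the $\bfy$-differencing.

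\textbf{Counting and shrinking the boxes.} Apply Lemma \ref{lemma:bilinearbound} to the inner sum with matrix $\a d_1!d_2!\bfM_{k_0}(\cdots)$. If $|S(\a)|>X^{s_1}Y^{s_2}Q^{-1}$, this yields a lower bound for the number of $(\bfX^{(d_1-1)},\bfY^{(d_2-1)},\bfy_{d_2})$ in the full boxes with $\|\a d_1!d_2!\bfM_{k_0}\bfy_{d_2}\|\leq X^{-1}$, namely $\gg X^{s_1(d_1-1)}Y^{s_2d_2}P^{-\eps}Q^{-2^D}$. To pass to the smaller box $H$ and the modulus $X^{d_1}Y^{d_2}H^{-(D+1)}$, we shrink one tuple at a time, as in Schindler's and Davenport's argument. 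Viewing the condition as a bilinear system in each variable separately, Lemma \ref{lemma:lattice point comparison} (in the form $U(Z_2)\ll\max(\cdots)$) lets us replace the range $X$ or $Y$ by $H$ while multiplying the modulus by $X/H$ or $Y/H$. The dichotomy in Lemma \ref{lemma:lattice point comparison}, with $U$ versus $U^t$, is exactly what produces the alternative $i\in\{1,2\}$. When the transpose branch occurs, we continue shrinking with $\bfM_{k_0}^T$ and $\bfx_{d_1}$ in place of $\bfy_{d_2}$, which yields $\mathcal N_{2,k_0}$.

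After $D+1$ shrinkings, the exponent $H^{s_1d_1+s_2d_2-s_i}$ follows from the loss of $(\text{range}/H)^{s}$ at each step, and the modulus becomes $X^{d_1}Y^{d_2}H^{-(D+1)}$. Throughout, the hypothesis $\log H\asymp\log P$ absorbs the logarithmic factors into $P^{\eps}$.
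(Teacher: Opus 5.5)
Your overall architecture matches the paper's: you difference to isolate $F^{(d_1,d_2)}$, apply Lemma \ref{lemma:bilinearbound}, and shrink with Lemma \ref{lemma:lattice point comparison}, whose two branches give $i\in\{1,2\}$. However, the differencing step, which you yourself flag as the main obstacle, is not resolved, and the shape you aim for cannot be reached as described.

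\textbf{The differencing step.} Your first display is Schmidt's form, with the $\bfy$-sum outside $\bigl|\sum_{\bfx_{d_1}}\cdots\bigr|$. From $\sum_{\bfy}|\cdots|$ you cannot pass to a power of a $\bfy$-sum and then difference in $\bfy$. The missing device is Lemma \ref{lemma:schindlerweyl}, applied in the $\bfx$-variables. For fixed $\bfy$ it bounds $\bigl|\sum_{\bfx}e(\a F(\bfx;\bfy))\bigr|^{2^{d_1-1}}$ by a non-negative real sum over all of $\bfx_1,\ldots,\bfx_{d_1}$, with no absolute value. Its phase is $F_{d_1}-F_{d_1-1}$, where the differences are taken in $\bfx$ with $\bfy$ as a parameter. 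After H\"older in $\bfy$ you interchange so that the $\bfy$-sum is innermost. H\"older plus Lemma \ref{lemma:schmidtweyl} with $j=d_2$ then leaves $\bfy_{d_2}$, not $\bfx_{d_1}$, as the inner variable.

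\textbf{The missing degree fact.} The bookkeeping rests on a fact you do not use. The retained term $F_{d_1-1}$ keeps any component of $\bfx$-degree $d_1-1$. Such a component has $\bfy$-degree larger than $d_2$, so it would survive the $\bfy$-differencing and be nonlinear in $\bfy_{d_2}$. One therefore needs $d_1(k)=d_1-(k-k_0)w_2\le d_1-2$ for every $k>k_0$, which holds because $w_2\ge2$. Components with $k<k_0$ have $\bfy$-degree at most $d_2-1$ and vanish under the full $d_2$-fold $\bfy$-difference. This leaves $d_1!d_2!\,\bfy_{d_2}^T(\bfM_{k_0}^T\bfx_{d_1}+\bfL_{k_0})$. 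Lemma \ref{lemma:bilinearbound} is then applied with $\bfy_{d_2}$ as the inner variable and the shift absorbing $\bfL_{k_0}$. This produces a count of tuples with $\|\a d_1!d_2!\bfM_{k_0}^T\bfx_{d_1}\|\le c_0Y^{-1}$. The case $d_1=1$ is handled separately, by differencing only in $\bfy$.

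\textbf{Why your final shape fails.} Your final display has the roles reversed. Putting $\bfx_{d_1}$ innermost would require differencing in $\bfx$ last, that is, differencing in $\bfy$ first via Lemma \ref{lemma:schindlerweyl}. That order breaks down when $w_1=1$. The component $k_0-1$ then has $\bfy$-degree $d_2-1$, so it survives in the retained $(d_2-1)$-fold $\bfy$-difference. It has $\bfx$-degree $d_1+w_2$, so after $d_1$ differences in $\bfx$ it contributes a phase of degree $w_2+1\ge3$ in $\bfx_{d_1}$. For instance, take $(w_1,w_2,d)=(1,2,6)$ and $(d_1,d_2)=(2,2)$: the $(4,1)$-component leaves a cubic in $\bfx_2$. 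The asymmetry between $w_2\ge2$ and $w_1\ge1$ is exactly what dictates the order.

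\textbf{Consequence for the shrinking step.} Lemma \ref{lemma:lattice point comparison} is only available with $Z_2\le C_0$. The paper secures this because its initial tolerance is $Y^{-1}$, together with $X\le Y$ and $H\le X$. Starting from tolerance $X^{-1}$, as you do, shrinking a $\bfy$-variable of range $Y$ can give $Z_2^2$ as large as $Y/X$, which is unbounded in general.
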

\begin{proof}
    We begin with the bihomogeneous decomposition
    \begin{equation}\label{eqs: decomposition into bihomo forms}
        F(\bfx;\bfy)
        =\sum_{0\leq k\leq m}
        F^{(d_1(k),d_2(k))}(\bfx;\bfy),
    \end{equation}
    where $d_1(k)=d_1(0)-kw_2$ and
    $d_2(k)=d_2(0)+kw_1$. We now define our exponential sum
    \[S(\a) = \sum_{\bfx \in X \mathcal{B}_1} \sum_{\bfy \in Y \mathcal{B}_2}e(\a F(\bfx;\bfy)),\]
    and
    \begin{equation}\label{def of Sy}
        S_{\bfy}(\a) = \sum_{\bfx \in X \mathcal{B}_1}e(\a F(\bfx;\bfy)).
    \end{equation}
    For now we shall write some notational shorthand in an attempt to keep our notation tidy. We will now write $B_1 = X \mathcal{B}_1$ and $B_2 = Y \mathcal{B}_2$. Fix $\bfy \in \Z^{s_2}$ and $\a \in [0,1)$, and use the notation
    \[d_1 = d_1(k_0) \ \text{and} \ d_2 = d_2(k_0).\]
    We first suppose that $d_1\geq2$.
    Since $d_1\geq2$, Lemma \ref{lemma:schindlerweyl} gives
    \begin{equation}\label{eqs:S_y bound}
        |S_{\bfy}(\a)|^{2^{d_1 - 1}} \ll X^{s_1(2^{d_1-1}-d_1)} \sum_{\substack{\bfx_1,\ldots,\bfx_{d_1-2} \in B_1^D \\ \bfx_{d_1-1} \in B_1(\bfx_1,\ldots,\bfx_{d_1-2})^D \\ \bfx_{d_1} \in B_1(\bfx_1,\ldots,\bfx_{d_1-1})}} e(\a(F_{d_1,1}(\bfX^{(d_1)};\bfy)-F_{d_1-1,1}(\bfX^{(d_1 -1)};\bfy))).
    \end{equation}
    By utilizing Lemma \ref{lemma:schmidtdifferenceidentify} along with our decomposition of $F$ \eqref{eqs: decomposition into bihomo forms} and noting that $w_2>w_1 \ge 1$, one sees that
    \[F_{d_1,1}(\bfX^{(d_1)};\bfy)-F_{d_1-1,1}(\bfX^{(d_1 -1)};\bfy) = \sum_{0 \le k \le k_0} \left(F^{(d_1(k),d_2(k))}_{d_1,1}(\bfX^{(d_1)};\bfy) - F_{d_1 -1,1}^{(d_1(k),d_2(k))}(\bfX^{(d_1 - 1)};\bfy)\right).\]
    In order to keep notation in check, we denote
    \[\calG(\bfX^{(d_1)};\bfy) = \sum_{0 \le k \le k_0} \left(F^{(d_1(k),d_2(k))}_{d_1,1}(\bfX^{(d_1)};\bfy) - F_{d_1 -1,1}^{(d_1(k),d_2(k))}(\bfX^{(d_1 - 1)};\bfy)\right).\]
    Thus, via the use of H\"{o}lder's inequality one may deduce the bound:
    \begin{align*}
        |S(\a)|^{2^{d_1 - 1}} &\ll Y^{s_2(2^{d_1 - 1}-1)} \sum_{\bfy \in B_2} |S_{\bfy}(\a)|^{2^{d_1 - 1}}, \\
        &\ll Y^{s_2(2^{d_1 - 1}-1)} X^{s_1(2^{d_1-1}-d_1)} \sum_{\substack{\bfy \in B_2 \\ \bfx_1,\ldots,\bfx_{d_1}}} e\left(\a \calG(\bfX^{(d_1)};\bfy) \right),
    \end{align*}
    where the sum over $\bfx_1,\ldots,\bfx_{d_1}$ is the same as in \eqref{eqs:S_y bound}. For fixed $\bfX^{(d_1)} \in \Z^{s_1 \times d_1}$ and $\a \in [0,1)$, when $d_2\geq2$ we apply Lemma \ref{lemma:schmidtweyl}. When $d_2=1$, the corresponding estimate follows directly from the triangle inequality, and no $\bfy$-differencing is required. With the empty-tuple convention from Subsection \ref{subsec:notational conventions}, both cases may be written as
    \[\Big|\sum_{\bfy \in B_2}e( \a \calG(\bfX^{(d_1)};\bfy)) \Big|^{2^{d_2 - 1}} \ll Y^{s_2(2^{d_2-1} -d_2)}  \sum_{\substack{\bfy_1,\ldots,\bfy_{d_2-1} \in B_2^D }} \Big|\sum_{\bfy_{d_2} \in B_2(\bfy_1,\ldots,\bfy_{d_2-1})} e( \a \calG_{d_2}(\bfX^{(d_1)};\bfY^{(d_2)})) \Big| .\]
    By Lemma \ref{lemma:schmidtdifferenceidentify}, we have the identity
    \begin{align*}
        \calG_{d_2} \bigl(\bfX^{(d_1)};\bfY^{(d_2)}\bigr) &= F^{(d_1,d_2)}_{d_1,d_2} \bigl(\bfX^{(d_1)};\bfY^{(d_2)}\bigr) -
    F^{(d_1,d_2)}_{d_1-1,d_2} \bigl(\bfX^{(d_1-1)};\bfY^{(d_2)}\bigr), \\
    &= d_1!d_2! \bfy_{d_2}^{T}
    \Bigl(
        \bfM_{k_0}\bigl(\bfX^{(d_1-1)};\bfY^{(d_2-1)}\bigr)^{T}\bfx_{d_1}
        +\bfL_{k_0}\bigl(\bfX^{(d_1-1)};\bfY^{(d_2-1)}\bigr)
    \Bigr).
    \end{align*}
    Here the matrix associated with the $k_0$-th bihomogeneous component is
    \[
        \bfM_{k_0}\bigl(\bfX^{(d_1-1)};\bfY^{(d_2-1)}\bigr),
    \]
    as defined in \eqref{eqs: matrix def}, while the vector of rational
    polynomials supplied by Lemma \ref{lemma:schmidtdifferenceidentify} is
    \[
        \bfL_{k_0}\bigl(\bfX^{(d_1-1)};\bfY^{(d_2-1)}\bigr)
        \in \Q[\bfX^{(d_1-1)},\bfY^{(d_2-1)}]^{s_2}.
    \]
    For notational convenience, throughout the remainder of the proof write
    \[
        \bfM_{k_0}
        =\bfM_{k_0}\bigl(\bfX^{(d_1-1)};\bfY^{(d_2-1)}\bigr)
        \qquad\text{and}\qquad
        \bfL_{k_0}=\bfL_{k_0}\bigl(\bfX^{(d_1-1)};\bfY^{(d_2-1)}\bigr).
    \]
    Setting $D = D(k_0) = d_1(k_0)+d_2(k_0)-2$, we deduce via H\"{o}lder's inequality the following bound:
    \begin{align*}
        |S(\a)|^{2^{D}} &\ll Y^{s_2(2^ {D}-2^{d_2-1})} X^{s_1(2^{D}-d_1 2^{d_2 -1})} \Big|\sum_{\substack{\bfy \in B_2 \\ \bfx_1,\ldots,\bfx_{d_1}}} e\left( \a \calG(\bfX^{(d_1)};\bfy) \right) \Big|^{2^{d_2-1}}, \\
        &\ll Y^{s_2(2^{D}-2^{d_2-1})} X^{s_1(2^{D}-d_1)} \sum_{\substack{\bfx_1,\ldots,\bfx_{d_1}}}\Big| \sum_{\bfy \in B_2} e\left( \a \calG(\bfX^{(d_1)};\bfy) \right) \Big|^{2^{d_2-1}}, \\
        &\ll Y^{s_2(2^{D}-d_2)} X^{s_1(2^{D}-d_1)} \sum_{\substack{ \bfx_1,\ldots,\bfx_{d_1-1} \\ \bfy_1,\ldots,\bfy_{d_2-1}}}\Xi, \\
    \end{align*}
    where
    \begin{equation*}
    \Xi = \sum_{\bfx_{d_1}\in
    B_1(\bfx_1,\ldots,\bfx_{d_1-1})}
    \Bigg|
    \sum_{\bfy_{d_2}\in
    B_2(\bfy_1,\ldots,\bfy_{d_2-1})} e\left(
        \a d_1!d_2!\,
        \bfy_{d_2}^{T}
        \bigl(
            \bfM_{k_0}^{T}\bfx_{d_1}+\bfL_{k_0}
        \bigr)
    \right)
    \Bigg|.
    \end{equation*}
    When $d_1=1$, the same estimate follows by the symmetric argument in
    which the $\bfy$-differencing is performed first.  More precisely,
    H\"older's inequality gives
    \[
        |S(\a)|^{2^{d_2-1}}
        \leq
        X^{s_1(2^{d_2-1}-1)}
        \sum_{\bfx\in B_1}
        \left|
            \sum_{\bfy\in B_2}e\bigl(\a F(\bfx;\bfy)\bigr)
        \right|^{2^{d_2-1}}.
    \]
    If $d_2\geq2$, we apply Lemma \ref{lemma:schindlerweyl} in the
    $\bfy$-variables.  If $d_2=1$, no differencing is required and the
    corresponding estimate follows from the triangle inequality.  Since
    $d_1(k)$ decreases and $d_2(k)$ increases with $k$, the resulting
    $d_2$-fold difference isolates the component indexed by $k_0$, up to a
    term independent of $\bfx$.  This term is absorbed into
    $\bfL_{k_0}$.  Leaving $\bfy_{d_2}$ as the final linear summation
    variable gives the same quantity $\Xi$ and the same bound
    \[
        |S(\a)|^{2^D}
        \ll
        Y^{s_2(2^D-d_2)}X^{s_1(2^D-d_1)}
        \sum_{\bfY^{(d_2-1)}}\Xi.
    \]
    Thus the following argument applies for every $d_1,d_2>0$.

    We now keep track explicitly of the constants arising from our differenced boxes. There are fixed constants $c_X,c_Y,c_0\geq 1$,
    depending only on $d_1,d_2,\mathcal B_1$ and $\mathcal B_2$, such that
    Lemma \ref{lemma:bilinearbound} gives
    \[
        |S(\a)|^{2^{D}}
        \ll
        Y^{s_2(2^D-d_2+1)+\eps}X^{s_1(2^D-d_1)}
        \widetilde{\mathcal N}_{2,k_0}(\a;X,Y),
    \]
    where $\widetilde{\mathcal N}_{2,k_0}(\a;X,Y)$ counts the integral tuples
    $\bfX^{(d_1-1)},\bfY^{(d_2-1)},\bfx_{d_1}$ satisfying
    \[
        |\bfX^{(d_1-1)}|\leq c_XX,
        \qquad
        |\bfY^{(d_2-1)}|\leq c_YY,
        \qquad
        |\bfx_{d_1}|\leq c_XX,
    \]
    and
    \[
        \bigl\|
        \a d_1!d_2!\,\bfM_{k_0}^{T}\bfx_{d_1}
        \bigr\|
        \leq c_0Y^{-1}.
    \]

    Put
    \[
        C=\max\{c_X,c_Y,c_0,1\},
        \qquad Z=CY,
        \qquad L=c_0C,
    \]
    and let $\mathcal M_{2,k_0}(\a;X,Y;Z,L)$ denote the number of integral
    tuples $\bfX^{(d_1-1)},\bfY^{(d_2-1)},\bfx_{d_1}$ for which
    \[
        |\bfX^{(d_1-1)}|\leq CX,
        \qquad
        |\bfY^{(d_2-1)}|\leq Z,
        \qquad
        |\bfx_{d_1}|\leq CX,
    \]
    and
    \[
        \bigl\|
        \a d_1!d_2!\,\bfM_{k_0}^{T}\bfx_{d_1}
        \bigr\|
        \leq LZ^{-1}.
    \]
    Since $C\geq c_X,c_Y$, we have
    \[
        c_XX\leq CX,
        \qquad
        c_YY\leq CY=Z.
    \]
    Moreover, the definitions of $Z$ and $L$ give the exact identity
    \[
        c_0Y^{-1}=LZ^{-1}.
    \]
    Every tuple counted by
    $\widetilde{\mathcal N}_{2,k_0}(\a;X,Y)$ is therefore counted by
    $\mathcal M_{2,k_0}(\a;X,Y;Z,L)$, whence
    \[
        \widetilde{\mathcal N}_{2,k_0}(\a;X,Y)
        \leq
        \mathcal M_{2,k_0}(\a;X,Y;Z,L).
    \]
    It follows that
    \[
        |S(\a)|^{2^{D}}
        \ll
        Y^{s_2(2^D-d_2+1)+\eps}X^{s_1(2^D-d_1)}
        \mathcal M_{2,k_0}(\a;X,Y;Z,L).
    \]

    It follows that if
    \[
        X^{s_1}Y^{s_2}Q^{-1}<|S(\a)|,
    \]
    then
    \[
        \mathcal M_{2,k_0}(\a;X,Y;Z,L)
        \gg
        X^{s_1d_1}Y^{s_2(d_2-1)-\eps}Q^{-2^D}.
    \]

    As in Schindler \cite[\S 4]{Schindler2014}, we apply Lemma
    \ref{lemma:lattice point comparison} in each differencing variable in order to
    bound $\mathcal M_{2,k_0}(\a;X,Y;Z,L)$ in terms of
    the counting functions $\mathcal N_{1,k_0}$ and $\mathcal N_{2,k_0}$
    with their first two arguments both equal to $H$. For each fixed choice 
    of the remaining differencing variables, we apply Lemma \ref{lemma:lattice point comparison} 
    to the selected variable and then sum over all admissible choices of the remaining variables. 
    Specifically, we apply the lemma first to $\bfx_{d_1}$, then successively to
    \[
        \bfx_1,\ldots,\bfx_{d_1-1},
    \]
    and finally to
    \[
        \bfy_1,\ldots,\bfy_{d_2-1}.
    \]

    Initially, every $\bfx$-variable has range $CX$, every $\bfy$-variable
    has range $CY$, and the upper bound in the simultaneous inequalities
    defining $\mathcal M_{2,k_0}(\a;X,Y;Z,L)$ is
    \[
        LZ^{-1}=c_0Y^{-1}.
    \]
    After $r$ applications to $\bfx$-variables and $t$ applications to
    $\bfy$-variables, the upper bound in these simultaneous inequalities is
    \[
        \frac{c_0}{Y}
        \left(\frac{H}{CX}\right)^r
        \left(\frac{H}{CY}\right)^t.
    \]
    Indeed, an application to a $\bfx$-variable replaces its range $CX$ by
    $H$ and multiplies this upper bound by $H/(CX)$; an application to a
    $\bfy$-variable has the analogous effect with $CY$ in place of $CX$.

    We verify that Lemma \ref{lemma:lattice point comparison} is applicable at every
    application. Before the $(r+1)$-st application to a $\bfx$-variable, where
    $0\leq r<d_1$, choose $a,Z_1,Z_2>0$ so that
    \[
        aZ_2=CX,
        \qquad
        a^{-1}Z_2=\frac{c_0}{Y}
        \left(\frac{H}{CX}\right)^r,
        \qquad
        aZ_1=H.
    \]
    Thus
    \[
        a^{-1}Z_1=\frac{c_0}{Y}
        \left(\frac{H}{CX}\right)^{r+1},
        \qquad
        \frac{Z_1}{Z_2}=\frac{H}{CX},
    \]
    and
    \[
        Z_2^2
        =CX\frac{c_0}{Y}
        \left(\frac{H}{CX}\right)^r
        \leq Cc_0,
    \]
    since $X\leq Y$ and $H\leq X$. After all the $\bfx$-variables have been
    shortened, and before the $(t+1)$-st application to a $\bfy$-variable,
    where $0\leq t<d_2-1$, choose $a,Z_1,Z_2>0$ so that
    \[
        aZ_2=CY,
        \qquad
        a^{-1}Z_2=\frac{c_0}{Y}
        \left(\frac{H}{CX}\right)^{d_1}
        \left(\frac{H}{CY}\right)^t,
        \qquad
        aZ_1=H.
    \]
    Then
    \[
        a^{-1}Z_1=\frac{c_0}{Y}
        \left(\frac{H}{CX}\right)^{d_1}
        \left(\frac{H}{CY}\right)^{t+1},
        \qquad
        \frac{Z_1}{Z_2}=\frac{H}{CY},
    \]
    and
    \[
        Z_2^2
        =CY\frac{c_0}{Y}
        \left(\frac{H}{CX}\right)^{d_1}
        \left(\frac{H}{CY}\right)^t
        \leq Cc_0.
    \]
    Moreover, $H\leq CX,CY$, so in each application $0<Z_1\leq Z_2$.
    Thus Lemma \ref{lemma:lattice point comparison} applies throughout with a fixed
    upper bound for $Z_2$.

    Each application of Lemma \ref{lemma:lattice point comparison} gives a bound in
    terms of the maximum of two quantities, corresponding to the original and
    transposed systems of multilinear forms. We bound this maximum by their
    sum. After all $D+1$ applications, we therefore obtain a sum of at most
    $2^{D+1}$ counting functions. The fully differenced form
    $F^{(d_1,d_2)}_{d_1,d_2}$ is symmetric in
    $\bfx_1,\ldots,\bfx_{d_1}$ and, separately, in
    $\bfy_1,\ldots,\bfy_{d_2}$. Hence, after permuting the $\bfx$-variables
    among themselves and the $\bfy$-variables among themselves, each of these
    counting functions has one of the two forms defining
    $\mathcal N_{1,k_0}$ and $\mathcal N_{2,k_0}$. Since all the
    differencing variables have magnitude at most $H$, these permutations do
    not alter the bounds on the variables, and the first two arguments of the
    resulting counting functions are both $H$.
    
    Forcing all differencing variables to have magnitude at most $H$ is essential for this 
    identification, as otherwise we would need to sum a large number of counting functions 
    which are not directly comparable to $\calN_{1,k_0}$ or $\calN_{2,k_0}$.

    After all $d_1+d_2-1=D+1$ applications of Lemma \ref{lemma:lattice point comparison}, we deduce
    \[
        \frac{\mathcal M_{2,k_0}(\a;X,Y;Z,L)}
        {X^{s_1d_1}Y^{s_2(d_2-1)}}
        \ll
        \sum_{i=1}^{2}
        \frac{
            \mathcal N_{i,k_0}
            \left(
                \a;H,H,
                X^{d_1}Y^{d_2}H^{-(D+1)}
            \right)
        }{H^{s_1d_1+s_2d_2-s_i}}.
    \]
    Combining this estimate with the preceding lower bound, and using
    $\log X\asymp\log Y\asymp\log P$ to absorb $Y^{-\eps}$ into
    $P^{-\eps}$ after relabelling $\eps$, shows that for at least one
    $i\in\{1,2\}$,
    \[
        \mathcal N_{i,k_0}
        \left(
            \a;H,H,
            X^{d_1}Y^{d_2}H^{-(D+1)}
        \right)
        \gg
        H^{s_1d_1+s_2d_2-s_i}P^{-\eps}Q^{-2^D}.
    \]
    This is the desired alternative.

\end{proof}

\begin{lemma}\label{lemma:hat sigma weighted Weyl alternative}
    Define
    \[
        \eta_*
        =\min_{0\leq k\leq m}
        \begin{cases}
        \bigl(D(k)+1\bigr)\dfrac{\log Y}{\log P},
            & d_1(k)=0,\\[3mm]
        \bigl(D(k)+1\bigr)\dfrac{\log X}{\log P},
            & d_1(k)>0.
        \end{cases}
    \]
    Let $\kappa>0$ and $0<\eta\leq\eta_*$. Then at least one of the
    following holds.
    \begin{itemize}
        \item One has
        \[
            |S(\a)|\ll X^{s_1}Y^{s_2}P^{-\kappa+\eps}.
        \]
        \item For some $0\leq k\leq m$, there exist coprime
        integers $a,q$ such that
        \[
            1\leq q\leq P^\eta,
            \qquad
            |q\a-a|
            \leq
            X^{-d_1(k)}Y^{-d_2(k)}P^\eta.
        \]
        \item For every $0\leq k\leq m$ one has
        \[
            s_1+s_2-\widehat{\sigma}(k)
            \leq
            \varkappa(k)\frac{\kappa}{\eta},
        \]
        where
        \[
            \varkappa(k)=
            \begin{cases}
                2^{D(k)+1}(D(k)+1),
                    & d_1(k)d_2(k)=0,\\
                2^{D(k)}(D(k)+1),
                    & d_1(k)d_2(k)>0.
            \end{cases}
        \]
    \end{itemize}
\end{lemma}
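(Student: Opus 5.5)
Fix $0\leq k\leq m$ and suppose that the first two alternatives fail for this $k$. We aim to show that then $s_1+s_2-\widehat{\sigma}(k)\leq\varkappa(k)\kappa/\eta$. Once this is done for every $k$, the third alternative follows. The argument is Birch's geometric dichotomy, run in Schindler's bihomogeneous form, and applied to the counting functions produced by Lemma \ref{lemma: BihomWeyl full weighted form}.

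\emph{Mixed case $d_1(k)d_2(k)>0$.} Apply Lemma \ref{lemma: BihomWeyl full weighted form} with $k_0=k$, $Q=P^{\kappa}$, and $H=P^{\eta/(D+1)}$. The condition $\eta\leq\eta_*$ gives $H\leq X\leq Y$, and clearly $\log H\asymp\log P$. Since $|S(\a)|>X^{s_1}Y^{s_2}P^{-\kappa+\eps}$, for some $i$ we obtain
\[
\mathcal N_{i,k}\bigl(\a;H,H,X^{d_1}Y^{d_2}H^{-(D+1)}\bigr)\gg H^{s_1d_1+s_2d_2-s_i}P^{-\eps-2^D\kappa}.
\]
Take $i=2$; the case $i=1$ is symmetric. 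Consider the tuples with
\[
\mathbf v:=d_1!d_2!\,\bfM_k\bigl(\bfX^{(d_1-1)};\bfY^{(d_2-1)}\bigr)^T\bfx_{d_1}\neq\mathbf 0.
\]
For each such tuple, choose a coordinate $v_j\neq0$ and put $q=|v_j|\ll H^{D+1}=P^\eta$. Then $\|q\a\|\leq X^{-d_1}Y^{-d_2}H^{D+1}=X^{-d_1}Y^{-d_2}P^\eta$. This yields the second alternative after reducing $a/q$ to lowest terms, because the inequality is only strengthened when $q$ is divided by $\gcd(a,q)$. Hence every counted tuple must satisfy $\bfM_k^T\bfx_{d_1}=\mathbf 0$. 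These tuples lie in the affine variety $\{\bfM_k(\bfX;\bfY)^T\bfx_{d_1}=\mathbf 0\}$. Setting all differencing variables equal, as in Schmidt and Schindler, the diagonal of this variety is $V_1(F^{(d_1,d_2)})$, since $\bfM_k^T\bfx$ evaluated on the diagonal is proportional to $\nabla_{\bfy}$; one must check which $V_i$ corresponds to $i=2$. Schindler's argument (cf.\ \cite[\S 4]{Schindler2014}) bounds the dimension of this variety by $s_1(d_1-1)+s_2(d_2-1)+\dim V_i$. The number of its integer points with coordinates at most $H$ is therefore $\ll H^{s_1d_1+s_2d_2-s_i-(s_1+s_2-\dim V_i)}$. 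Comparing this with the lower bound gives
\[
s_1+s_2-\widehat{\sigma}(k)\leq s_1+s_2-\dim V_i\leq 2^D(D+1)\frac{\kappa}{\eta}+O(\eps),
\]
and the $\eps$ is removed by taking it small enough, since the inequality is between fixed integers and rationals. This matches $\varkappa(k)$.

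\emph{Pure cases $d_1(k)d_2(k)=0$.} Suppose $d_2(k)=0$, so $k=0$ and $d_2(0)=0$. I would run Lemma \ref{lemma:schmidtweyl} in $\bfx$ alone. Since $d_1(0)$ is the largest $\bfx$-degree, $d_1(0)$-fold differencing in $\bfx$ isolates $F^{(d_1,0)}$ with the $\bfy$-dependent lower terms gone. We first apply H\"older in $\bfy$, then Lemma \ref{lemma:bilinearbound}, then Lemma \ref{lemma:lattice point comparison} in the $\bfx$-variables only, following Birch's classical argument, with $H=P^{\eta/(D+1)}\leq X$. The singular locus of $F^{(d_1,0)}$, viewed in $\A^{s_1+s_2}$, has dimension $s_2+\dim V_1$, and the loss of a factor $2$ in $\varkappa$ reflects the extra exponent $2^{D+1}$ of Birch's method. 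The case $d_1(k)=0$ is handled the same way with $\bfy$ in place of $\bfx$ and $H\leq Y$; this uses the $\log Y$ entry of $\eta_*$.

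\emph{Main obstacle.} The delicate step is the dimension bound for the variety $\{\bfM_k^T\bfx_{d_1}=\mathbf 0\}$ in terms of $\dim V_i$, that is, the Birch--Schindler diagonal intersection argument. One must confirm that the non-$k$ components leave no trace in $\bfM_k$, which follows from the construction in Lemma \ref{lemma: BihomWeyl full weighted form}, and match the exponents exactly.
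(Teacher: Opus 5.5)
Your proposal follows the paper's proof essentially step for step: Lemma~\ref{lemma: BihomWeyl full weighted form} with $Q=P^\kappa$ and $H\asymp P^{\eta/(D+1)}$, a nonzero multilinear vector giving the rational approximation, Schindler's diagonal-intersection count when every such vector vanishes, and Birch's Lemma~2.5 for the pure components. Three minor repairs are needed. First, take $H=C_k^{-1/(D+1)}P^{\eta/(D+1)}$, with $C_k$ bounding the entries of $d_1!\,d_2!\,\bfM_k^T\bfx_{d_1}$ and $d_1!\,d_2!\,\bfM_k\bfy_{d_2}$ on $[-H,H]$, so that $q\leq P^\eta$ holds exactly; you only get $q\ll P^\eta$, and the statement allows no implied constant there. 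Second, the diagonal of $\{\bfM_k^T\bfx_{d_1}=\mathbf 0\}$ is $V_2$, since it is cut out by $\nabla_{\bfy}$, and not $V_1$. Third, the variety attached to $\mathcal N_{i,k}$ has dimension at most $s_1(d_1-1)+s_2(d_2-1)-s_i+\dim V_i$; your stated bound omits the $-s_i$, although the point-count exponent you then use is the correct one.
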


\begin{proof}
    Fix an index $0\leq k\leq m$ for which
    $d_1(k),d_2(k)>0$, and abbreviate
    \[
        d_1=d_1(k),
        \qquad
        d_2=d_2(k),
        \qquad
        D=d_1+d_2-2,
        \qquad
        r=D+1.
    \]
    Let $C_k\geq 1$ be a fixed constant such that every coordinate of
    either multilinear vector occurring below has absolute value at most
    $C_kH^r$ whenever all its arguments lie in $[-H,H]$. Apply Lemma
    \ref{lemma: BihomWeyl full weighted form} with
    \[
        H=C_k^{-1/r}P^{\eta/r},
        \qquad
        Q=P^\kappa.
    \]
    In order to apply Lemma
    \ref{lemma: BihomWeyl full weighted form}, we require $H\leq X$.
    This follows from the definition of $\eta_*$, which gives
    \[
        \eta\leq r\frac{\log X}{\log P}.
    \]
    If the first
    alternative of Lemma \ref{lemma: BihomWeyl full weighted form} holds,
    then the first conclusion above follows. Otherwise, for at least one
    $i\in\{1,2\}$,
    \begin{equation}\label{eq:hat sigma large count}
        \mathcal N_{i,k}
        \left(\a;H,H,X^{d_1}Y^{d_2}H^{-r}\right)
        \gg
        H^{s_1d_1+s_2d_2-s_i}
        P^{-\eps-2^D\kappa}.
    \end{equation}

    The relevant integral vectors are
    \[
        d_1!d_2!\,\bfM_k\bfy_{d_2}
        \qquad\text{or}\qquad
        d_1!d_2!\,\bfM_k^T\bfx_{d_1}.
    \]
    Suppose first that some tuple counted on the left-hand side of
    \eqref{eq:hat sigma large count} makes the relevant vector nonzero.
    Choose a nonzero coordinate $q_0$ of this vector. By
    the choice of $C_k$ and the bounds on the differencing variables,
    \[
        1\leq |q_0|\leq C_kH^r=P^\eta.
    \]
    The defining inequalities for $\mathcal N_{i,k}$ provide an integer
    $a_0$ for which
    \[
        |q_0\a-a_0|
        \leq
        X^{-d_1}Y^{-d_2}H^r
        \leq X^{-d_1}Y^{-d_2}P^\eta.
    \]
    After changing signs if necessary and dividing $a_0,q_0$ by their
    greatest common divisor, we obtain the second conclusion. The resulting
    bounds have the same constant-free form as the rational-approximation
    alternative in \cite[Lemma 2.5]{Birch1962}; the choice of $C_k$ above
    makes the absorption of the fixed coefficient constants explicit here.

    It remains to consider the case in which every tuple counted in
    \eqref{eq:hat sigma large count} makes the relevant multilinear
    vector vanish. Applying Lemma
    \ref{lemma: BihomWeyl full weighted form} with $Q=P^\kappa$ as above,
    and following Schindler's identification of these multilinear zero
    sets with the corresponding singular loci
    \cite[Lemmas 4.2 and 4.3]{Schindler2014}, we have
    \[
        \mathcal N_{i,k}
        \left(\a;H,H,X^{d_1}Y^{d_2}H^{-r}\right)
        \ll
        H^{s_1d_1+s_2d_2-s_i-\codim{V_i(F^{(d_1,d_2)})}+\eps}.
    \]
    Comparing this with \eqref{eq:hat sigma large count}, using
    $H\asymp P^{\eta/r}$, and then letting $\eps$ tend to zero gives
    \[
        s_1+s_2-\widehat{\sigma}(k)
        \leq
        2^D r\frac{\kappa}{\eta}
        =\varkappa(k)\frac{\kappa}{\eta}.
    \]

    We apply this argument separately to every index for which
    $d_1(k),d_2(k)>0$. Hence, unless the first or second conclusion has
    already occurred, the stated singular-locus inequality holds for every
    such index. For an index for which $d_1(k)d_2(k)=0$, the corresponding
    assertion follows from Birch \cite[Lemma 2.5]{Birch1962}.  The
    definition of $\eta_*$ ensures that the differencing parameter is at
    most $X$ for a pure $\bfx$-component and at most $Y$ for a pure
    $\bfy$-component.
    Indeed, fully differencing in the $\bfx$-variables causes the
    dependence on the $\bfy$-variables to vanish, while fully
    differencing in the $\bfy$-variables causes the dependence on the
    $\bfx$-variables to vanish. Hence
    \[
        \varkappa(k)
        =2^{d_1(k)+d_2(k)-1}
        \bigl(d_1(k)+d_2(k)-1\bigr)
        \qquad\bigl(d_1(k)d_2(k)=0\bigr),
    \]
    which completes the proof.
\end{proof}

The restriction $\eta\leq\eta_*$ is analogous to the restriction on the
common differencing scale in Schindler's argument.  Indeed, Schindler writes
$P_1=P_2^b$, where $P_2\leq P_1$, and introduces a common scale
$P^\theta=P_1^{\theta_1}=P_2^{\theta_2}$, with
$P=P_1^{d_1}P_2^{d_2}$.  The requirement $\theta_2\leq1$, which ensures
that the common scale does not exceed the shorter side $P_2$, is equivalent
to
\[
    \theta\leq\frac{1}{bd_1+d_2},
\]
see \cite[Lemma 4.3]{Schindler2014}.  In our setting, the corresponding
common scale for a component for which $d_1(k),d_2(k)>0$ is
\[
    H\asymp P^{\eta/(d_1(k)+d_2(k)-1)},
\]
and requiring $H\leq X$ gives
\[
    \eta\leq
    \bigl(d_1(k)+d_2(k)-1\bigr)\frac{\log X}{\log P}.
\]
The same restriction applies to a pure $\bfx$-component.  For a pure
$\bfy$-component, the differencing scale is instead required to be at most
$Y$, giving
\[
    \eta\leq
    \bigl(d_2(k)-1\bigr)\frac{\log Y}{\log P}.
\]
Taking the minimum of these quantities over all the bihomogeneous
components gives the definition of $\eta_*$ above.

\begin{lemma}\label{lemma:hat sigma pointwise minor arc bound}
    Suppose that
    \[
        X=P^{w_1/d},
        \qquad
        Y=P^{w_2/d}.
    \]
    Put
    \[
        \eta_*
        :=\min_{0\leq k\leq m}
        \begin{cases}
        \dfrac{w_2}{d}\bigl(D(k)+1\bigr),&d_1(k)=0,\\[3mm]
        \dfrac{w_1}{d}\bigl(D(k)+1\bigr),&d_1(k)>0,
        \end{cases}
    \]
    and set
    \[
        K:=K(F)=\max_{0\leq k\leq m}
        \frac{s_1+s_2-\widehat{\sigma}(k)}{\varkappa(k)}.
    \]
    If $K>0$, then, for every $0<\eta\leq\eta_*$, every $\eps>0$, and
    every $\a$ in the minor arcs $\grm(\eta)$, one has
    \begin{equation}\label{eqs: pointwise minor arc bound}
        |S(\a)|
        \ll X^{s_1}Y^{s_2}P^{-K\eta+\eps},
    \end{equation}
    where $\grm(\eta)=[0,1)\setminus\grM(\eta)$ and $\grM(\eta)$ is the
    set of major arcs defined in Section \ref{sec:circle method}.
\end{lemma}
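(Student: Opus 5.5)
The plan is to apply Lemma \ref{lemma:hat sigma weighted Weyl alternative} with $X=P^{w_1/d}$ and $Y=P^{w_2/d}$, and with $\kappa$ chosen just below $K\eta$. Then we eliminate the second and third alternatives.

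First, the parameter check. Since $\log X/\log P=w_1/d$ and $\log Y/\log P=w_2/d$, the quantity $\eta_*$ of Lemma \ref{lemma:hat sigma weighted Weyl alternative} coincides with the $\eta_*$ defined in the statement. Hence every $0<\eta\le\eta_*$ is admissible. Fix such an $\eta$ and a small $\delta>0$, and put $\kappa=K\eta-\delta$; we may assume $\kappa>0$ by taking $\delta<K\eta$, which uses $K>0$.

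Next, rule out the third alternative. Choose $k$ attaining the maximum in the definition of $K$, so that $s_1+s_2-\widehat\sigma(k)=K\varkappa(k)$. If the third alternative held, it would give in particular
\[
K\varkappa(k)\le\varkappa(k)\kappa/\eta=\varkappa(k)(K-\delta/\eta),
\]
which is a contradiction because $\varkappa(k)>0$.

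Then rule out the second alternative. Suppose that for some $k$ there are coprime $a,q$ with $1\le q\le P^\eta$ and $|q\a-a|\le X^{-d_1(k)}Y^{-d_2(k)}P^\eta$. Because $w_1d_1(k)+w_2d_2(k)=d$, we have $X^{d_1(k)}Y^{d_2(k)}=P$. So $|q\a-a|\le P^{\eta-1}$ with $q\le P^\eta$. After reducing $a$ modulo the appropriate range (using $\a\in[0,1)$ and $P^{\eta-1}<1$, one may arrange $0\le a\le q$), this says $\a\in\grM_{a,q}(\eta)\subset\grM(\eta)$. That contradicts $\a\in\grm(\eta)$. The only detail to handle is the endpoint normalisation of $a$: if $\a$ is near $1$, then $a=q$ is allowed by the definition of $\grM(\eta)$.

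Hence the first alternative holds, giving $|S(\a)|\ll X^{s_1}Y^{s_2}P^{-K\eta+\delta+\eps}$. Since $\delta$ and $\eps$ are arbitrary, relabelling $\eps$ yields \eqref{eqs: pointwise minor arc bound}. The implied constant depends on $\delta$ and hence only on $\eps$. The main (mild) obstacle is this bookkeeping: the strictness in the third alternative forces the loss of $\delta$, which is absorbed into $P^\eps$, and there is the endpoint issue of matching the rational approximation to the exact shape of $\grM_{a,q}(\eta)$.
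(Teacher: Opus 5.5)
Your proposal is correct and follows essentially the same route as the paper. Like the paper, you apply Lemma \ref{lemma:hat sigma weighted Weyl alternative} with $\kappa$ just below $K\eta$ (the paper takes $\kappa=(K-\eps)\eta$). You then use $X^{d_1(k)}Y^{d_2(k)}=P$ to see that the rational-approximation alternative would put $\a$ in $\grM(\eta)$, and use the maximality defining $K$ to exclude the singular-locus alternative. Your extra check that $0\le a\le q$ holds automatically is a detail the paper leaves implicit. It relies on $P^{\eta-1}<1$, which is true because $\eta\le\eta_*<1$: for $d_1(k)>0$ one has $w_1(d_1(k)+d_2(k)-1)<w_1d_1(k)+w_2d_2(k)=d$.
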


\begin{proof}
    Weighted homogeneity gives
    \[
        w_1d_1(k)+w_2d_2(k)=d,
    \]
    and hence
    \[
        X^{-d_1(k)}Y^{-d_2(k)}=P^{-1},
    \]
    for every $0\leq k\leq m$. Choose
    $0<\eps<K$ sufficiently small.
    Apply Lemma \ref{lemma:hat sigma weighted Weyl alternative} with
    $\kappa=(K-\eps)\eta$. If $\a\in\grm(\eta)$, then the second
    alternative of Lemma
    \ref{lemma:hat sigma weighted Weyl alternative} does not hold. The third
    alternative would imply
    \[
        \frac{s_1+s_2-\widehat{\sigma}(k)}{\varkappa(k)}
        \leq K-\eps
        \qquad
        \left(0\leq k\leq m\right),
    \]
    contradicting the definition of $K$. The first alternative therefore
    holds and gives an exponent of $-K\eta+\eps\eta+\eps$.
    Relabelling $\eps\eta+\eps$ as $\eps$ gives
    \eqref{eqs: pointwise minor arc bound}.
\end{proof}

\begin{lemma}\label{lemma: minor arc bound}
    Suppose that $K > \max\{ 2, 1/\eta_*\}$. Then there exist $\delta>0$ and
    \[
        0<\vartheta_0<\frac{w_1}{5d},
    \]
    such that
    \[\int_{\a \in \grm(\vartheta_0)} |S(\a)| \d \a \ll X^{s_1}Y^{s_2}P^{-1-\delta}.\]
\end{lemma}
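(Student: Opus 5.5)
This is the standard Birch pruning: a dyadic dissection of the minor arcs fed by the pointwise bound of Lemma \ref{lemma:hat sigma pointwise minor arc bound}. Fix $\vartheta_0$ small, with $0<\vartheta_0<w_1/(5d)$ and $\vartheta_0\leq\eta_*$; it will be shrunk further below. Choose a sequence
\[
\vartheta_0<\vartheta_1<\cdots<\vartheta_T=\eta_*,
\]
with $\vartheta_{t+1}-\vartheta_t\leq\nu$ for a small $\nu>0$ and $T\ll 1/\nu$. We then decompose
\[
\grm(\vartheta_0)=\grm(\eta_*)\cup\bigcup_{t=0}^{T-1}\bigl(\grm(\vartheta_t)\setminus\grm(\vartheta_{t+1})\bigr),
\]
where each difference set is contained in $\grM(\vartheta_{t+1})$.

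On $\grm(\eta_*)$ we apply Lemma \ref{lemma:hat sigma pointwise minor arc bound} with $\eta=\eta_*$. The hypothesis $K>1/\eta_*$ gives $K\eta_*>1$, so this piece contributes $\ll X^{s_1}Y^{s_2}P^{-K\eta_*+\eps}$, which is acceptable with $\delta<K\eta_*-1$.

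On a piece $\grm(\vartheta_t)\cap\grM(\vartheta_{t+1})$ we use the pointwise bound with $\eta=\vartheta_t$, which is admissible because $\vartheta_t\leq\eta_*$. This gives $|S(\a)|\ll X^{s_1}Y^{s_2}P^{-K\vartheta_t+\eps}$. For the measure, each arc $\grM_{a,q}(\vartheta_{t+1})$ has length $\ll q^{-1}P^{\vartheta_{t+1}-1}$, and summing over $a\leq q\leq P^{\vartheta_{t+1}}$ gives $\operatorname{meas}\grM(\vartheta_{t+1})\ll P^{2\vartheta_{t+1}-1}$. The piece therefore contributes
\[
\ll X^{s_1}Y^{s_2}P^{-1+2\vartheta_{t+1}-K\vartheta_t+\eps}
\leq X^{s_1}Y^{s_2}P^{-1-(K-2)\vartheta_t+2\nu+\eps}.
\]
Since $K>2$ and $\vartheta_t\geq\vartheta_0$, the exponent is at most $-1-(K-2)\vartheta_0+2\nu+\eps$. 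We therefore take $\nu=(K-2)\vartheta_0/4$ and $\eps$ small, and obtain a saving $P^{-\delta}$ with $\delta\asymp(K-2)\vartheta_0$. There are $T\ll1/\nu$ pieces, which is a constant independent of $P$, so summing them is harmless. Setting $\delta=\min\{(K-2)\vartheta_0/3,\,(K\eta_*-1)/2\}$ finishes the argument.

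Two points need checking. First, the pointwise lemma requires $K>0$, which holds here. Second, $\vartheta_0<\eta_*$ must be possible; this is clear because $\eta_*\geq w_1/d>0$, since $D(k)+1\geq1$ (after excluding degenerate linear components, which are trivial). The only real obstacle is bookkeeping: with $K$ barely above $2$, the factor $P^{2\vartheta_{t+1}}$ has to be beaten with a uniform margin. That is why the step size $\nu$ is tied to $\vartheta_0(K-2)$ rather than fixed in advance. Taking the finest sequence, $\vartheta_{t+1}=\vartheta_t+\nu$, handles this cleanly.
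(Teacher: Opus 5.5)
Your argument is correct and is essentially the paper's proof. Both use a finite chain of nested major arcs, applying the pointwise bound of Lemma~\ref{lemma:hat sigma pointwise minor arc bound} at the lower endpoint of each shell against $\operatorname{meas}\grM(\vartheta)\ll P^{2\vartheta-1}$ at the upper endpoint, with the step size tied to $(K-2)\vartheta_0$. The only difference is bookkeeping: you run the chain all the way to $\eta_*$ and use $K\eta_*>1$ directly on $\grm(\eta_*)$, whereas the paper steps by $a=(1-2/K)\vartheta_0-2\delta/K$ and stops at the first $\vartheta_T$ with $K\vartheta_T\geq 1+2\delta$, which requires the extra constraints $\vartheta_0<1/K$ and $(K-2)\vartheta_0<K\eta_*-1$ to guarantee $\vartheta_T<\eta_*$.
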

\begin{proof}
    Choose $\vartheta_0>0$ sufficiently small that
    \[
        \vartheta_0<\min\left\{\frac{w_1}{5d},\frac1K\right\},
        \qquad
        (K-2)\vartheta_0<K\eta_*-1,
    \]
    and then choose $\delta>0$ so that
    \[
        2\delta<(K-2)\vartheta_0.
    \]
    Put
    \[
        a=\left(1-\frac{2}{K}\right)\vartheta_0
        -\frac{2\delta}{K}>0,
        \qquad
        \vartheta_t=\vartheta_0+at.
    \]
    Let $T$ be the least positive integer for which
    \[
        K\vartheta_T\geq1+2\delta.
    \]
    The minimality of $T$ gives
    \[
        \vartheta_T<\frac{1+2\delta}{K}+a
        =\frac1K+\left(1-\frac2K\right)\vartheta_0
        <\eta_*,
    \]
    where the final inequality follows from the choice of $\vartheta_0$.
    Thus, the result of Lemma
    \ref{lemma:hat sigma pointwise minor arc bound} is available at every
    $\vartheta_t$ with $0\leq t\leq T$.

    By the definition of the major arcs, we have
    \[
        \grm(\vartheta_0)
        =\grm(\vartheta_T)\sqcup
        \bigsqcup_{1\leq t\leq T}
        \left(\grM(\vartheta_t)\setminus
        \grM(\vartheta_{t-1})\right).
    \]
    Moreover,
    \[
        \operatorname{meas}(\grM(\vartheta))
        \ll P^{2\vartheta-1}.
    \]
    The set
    $\grM(\vartheta_t)\setminus\grM(\vartheta_{t-1})$ is contained in
    $\grm(\vartheta_{t-1})$. Hence, by
    \eqref{eqs: pointwise minor arc bound},
    \begin{align*}
        \int_{\grm(\vartheta_0)}|S(\a)|\,\d\a
        &\ll X^{s_1}Y^{s_2}P^\eps
        \left(
            P^{-K\vartheta_T}
            +P^{-1}\sum_{1\leq t\leq T}
            P^{2\vartheta_t-K\vartheta_{t-1}}
        \right).
    \end{align*}
    Since $K\vartheta_T\geq1+2\delta$, the first term in parentheses is
    at most $P^{-1-2\delta}$. Furthermore, $K>2$ and
    \[
        2\vartheta_t-K\vartheta_{t-1}
        =(2-K)\vartheta_{t-1}+2a
        \leq(2-K)\vartheta_0+Ka
        =-2\delta.
    \]
    Since $T\ll1$, relabelling $\eps$ now gives the required estimate.
\end{proof}

\section{The singular series and singular integral}\label{sec:major arcs}
The singular series and singular integral are common to all three of our main
theorems. In this section, we study the absolute convergence of
\[
    \grS=\sum_{q=1}^{\infty}
    \sum_{\substack{1\leq a\leq q\\(a,q)=1}}
    q^{-s_1-s_2}S(a,q)
    \qquad\text{and}\qquad
    J=\int_{-\infty}^{\infty}I(\b;1)\,\d\b,
\]
as well as their relation to nonsingular local solutions. The arguments below
use the estimates from Section \ref{section4!} and therefore give the hypotheses needed
for Theorem \ref{thm:V1-V2-criterion}. The same local factors will be used
for Theorems \ref{thm:weighted-singular-locus} and
\ref{thm:component-singular-locus}, with the corresponding exponential
sum estimates substituted into the convergence arguments.

\begin{lemma}\label{lemma: complete singular series}
    Under the assumption that $K > 2$, the complete singular series $\grS$ is absolutely convergent and one has
    \[|\grS - \grS(Q)| \ll Q^{1-K/2}.\]
\end{lemma}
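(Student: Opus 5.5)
The plan is to bound the complete sums $S(a,q)$ by specialising the exponential sum estimate of Lemma \ref{lemma:hat sigma pointwise minor arc bound} to the point $\a=a/q$ with $X=P^{w_1/d}$ and $Y=P^{w_2/d}$. The aim is to show that
\[
    q^{-s_1-s_2}S(a,q)\ll q^{-K+\eps}
\]
uniformly in $a$ coprime to $q$. Summing over the $q$ values of $a$ then gives a term $\ll q^{1-K+\eps}$. Since $K>2$, the tail $\sum_{q>Q}q^{1-K+\eps}$ converges and is $\ll Q^{2-K+\eps}$.

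To obtain the pointwise bound, I fix $q$ and $(a,q)=1$. I choose $P$ as a suitable power of $q$ so that $X$ and $Y$ are integer multiples of $q$ up to harmless rounding. By periodicity, $S(a/q)$ over the box $|\bfx|\le X$, $|\bfy|\le Y$ then equals $(2X/q)^{s_1}(2Y/q)^{s_2}S(a,q)$, possibly after using a half-open box $[0,X)$. This is allowed because Section \ref{section4!} permits general boxes $\mathcal B_1,\mathcal B_2$.

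Next I take $\eta$ just below the threshold at which $a/q$ lies on the major arcs. Concretely, I choose $P^{\eta}$ slightly less than $q$, say $P^\eta=q/2$, and require $\eta\le\eta_*$. With this choice $a/q\in\grm(\eta)$. Indeed, any approximation $|q'\a-a'|\le P^{\eta-1}$ with $q'\le P^\eta<q$ forces $|q'a/q-a'|\ge 1/q$. This exceeds $P^{\eta-1}$ once $P$ is large compared with $q$, which holds because $\eta\le\eta_*<1$ makes $P$ a power of $q$ larger than the first.

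Lemma \ref{lemma:hat sigma pointwise minor arc bound} then gives
\[
    |S(a/q)|\ll X^{s_1}Y^{s_2}P^{-K\eta+\eps}\ll X^{s_1}Y^{s_2}q^{-K+\eps}.
\]
Dividing out the factor $(X/q)^{s_1}(Y/q)^{s_2}$ yields the desired bound for $q^{-s_1-s_2}|S(a,q)|$.

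The main obstacle is the quantitative gap between the tail bound $Q^{2-K+\eps}$ that this argument produces and the claimed bound $Q^{1-K/2}$. These agree only when $K\le 2$ or up to $\eps$-losses, so the bookkeeping must be arranged differently. I would first try to get a pointwise bound of the form $q^{-s_1-s_2}S(a,q)\ll q^{-K/2\cdot 2+\dots}$; the target exponent suggests the intended route uses $P^\eta\asymp q^{1/2}$, or averages differently. In that case the sum over $q>Q$ of $q\cdot q^{-K/2-\dots}$ gives $Q^{1-K/2}$ only after a dyadic or other refinement.

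Since $K>2$ gives absolute convergence either way, the fallback is to prove the weaker but sufficient bound $Q^{2-K+\eps}$ and check that it implies the claimed one. For $K>2$ one has $2-K+\eps\le 1-K/2$ exactly when $K\ge 2+2\eps$, which holds for $\eps$ small. The fallback therefore proves the statement as written.
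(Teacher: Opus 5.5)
Your argument is correct. It produces the same pointwise bound $S(a,q)\ll q^{s_1+s_2-K+\eps}$ and the same tail summation as the paper, but you reach the pointwise bound by a different specialisation.

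The paper applies Lemma \ref{lemma:hat sigma weighted Weyl alternative} directly with $X=Y=P=q$, $\a=a/q$ and $\eta=1-\tau$. In that case the sum is already the complete sum, and $\eta_*=\min_k(d_1(k)+d_2(k)-1)\ge1$ allows $\eta$ close to $1$. The rational-approximation alternative is excluded because $|q'a-a'q|\le q^{2-d_1(k)-d_2(k)-\tau}<1$ forces $q\mid q'$.

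You instead keep the weighted scaling $X=q^{w_1}$, $Y=q^{w_2}$, $P=q^{d}$. You pass to the complete sum by periodicity on the box $[0,1)$, which is admissible since its side length is $1$. You then quote the packaged minor-arc estimate, Lemma \ref{lemma:hat sigma pointwise minor arc bound}, with $P^\eta\approx q$, that is, $\eta\approx 1/d\le w_1/d\le\eta_*$.

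Your route gains that $X^{-d_1(k)}Y^{-d_2(k)}=P^{-1}$ for every $k$, so checking $a/q\in\grm(\eta)$ takes one line. The paper's route avoids the box inflation and the periodicity identity. The price is handling the $k$-dependent exponent $q^{-(d_1(k)+d_2(k))}$, which it does using $d_1(k)+d_2(k)\ge2$.

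Three corrections to your write-up:

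\begin{enumerate}
\item The inequality $1/q>P^{\eta-1}$ does not follow from $\eta\le\eta_*<1$. It needs roughly $P>q^2/2$, and with $P^\eta=q/2$ and $\eta$ near $\eta_*$ this fails whenever $\eta_*>1/2$. For instance, $(w_1,w_2,d)=(2,3,7)$ gives $\eta_*=4/7$, and then $a/q$ can genuinely lie in $\grM(\eta)$. The inequality does hold for your concrete choice $P=q^{d}$, since then $P^{\eta-1}\le q^{1-d}<q^{-1}$ because $d\ge3$.
\item It is cleaner to fix $\eta=(1-\tau)/d$ rather than let $\eta$ depend on $q$ through $P^\eta=q/2$. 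Then $q'\le q^{1-\tau}<q$ still excludes the major arcs, and you lose only a factor $q^{K\tau}$, which is absorbed into $q^{\eps}$.
\item The ``main obstacle'' you describe is not one. For $K>2$, the bound $Q^{2-K+\eps}$ is stronger than $Q^{1-K/2}$ once $\eps\le (K-2)/2$, which is exactly how the paper concludes. The speculation about $P^\eta\asymp q^{1/2}$ can simply be deleted.
\end{enumerate}
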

\begin{proof}
    Fix $0<\tau<1$.  We apply Lemma
    \ref{lemma:hat sigma weighted Weyl alternative} with
    \[
        X=Y=P=q,
        \qquad
        \a=\frac aq,
        \qquad
        \eta=1-\tau,
    \]
    where $(a,q)=1$.  With $X=Y=P=q$, the definition of $\eta_*$ in
    Lemma \ref{lemma:hat sigma weighted Weyl alternative} gives
    \[
        \eta_*
        =\min_{0\leq k\leq m}
        \bigl(d_1(k)+d_2(k)-1\bigr).
    \]
    Every admissible bidegree satisfies $d_1(k)+d_2(k)\geq2$.  Indeed,
    an admissible bidegree of ordinary degree $1$ would imply that
    $d\in\{w_1,w_2\}$, contrary to $d>w_1w_2$.  Therefore
    $\eta_*\geq1$, and the choice $\eta=1-\tau$ is admissible.

    Choose $0<\rho<K$ and set $\kappa=(K-\rho)\eta$.  The third
    alternative of Lemma \ref{lemma:hat sigma weighted Weyl alternative} would give
    \[
        \frac{s_1+s_2-\widehat{\sigma}(k)}{\varkappa(k)}\leq K-\rho
        \qquad
        \left(0\leq k\leq m\right),
    \]
    contrary to the definition of $K$.  Suppose that the second
    alternative holds.  Then for some $k$ there are coprime integers
    $a',q'$ such that
    \[
        1\leq q'\leq q^{1-\tau},
        \qquad
        \left|q'\frac aq-a'\right|
        \leq q^{-d_1(k)-d_2(k)+1-\tau}.
    \]
    Multiplication by $q$ gives
    \[
        |q'a-a'q|
        \leq q^{2-d_1(k)-d_2(k)-\tau}<1.
    \]
    The integer on the left must therefore vanish.  Since $(a,q)=1$,
    this implies $q\mid q'$, contradicting $q'<q$ when $q>1$.
    Hence the first alternative holds.  By first choosing $\rho$ and
    $\tau$ sufficiently small and then relabelling $\eps$, we obtain
    \[
        S(a,q)\ll q^{s_1+s_2-K+\eps}.
    \]
    It follows that
    \[
        |\grS-\grS(Q)|
        \leq
        \sum_{q>Q}\sum_{\substack{1\leq a\leq q\\(a,q)=1}}
        q^{-s_1-s_2}|S(a,q)|
        \ll Q^{2-K+\eps}
        \ll Q^{1-K/2},
    \]
    where the last estimate follows by taking
    $0<\eps<(K-2)/2$.
\end{proof}

Whenever $\grS$ converges absolutely, let
\[A(q) = q^{-s_1-s_2}\sum_{\substack{1 \le a \le q \\ (a,q) = 1}}S(a,q),\]
then the general arguments of \cite[Section 17]{Davenport2005} give
\[\grS = \prod_{p}\sigma_p \ \text{where} \    \sigma_p = \sum_{h=0}^{\infty}A(p^h).\]
One may then show, via standard techniques, that
\[\sigma_p = \lim_{h \to \infty}p^{h(1-s_1-s_2)} \# \{(\bfx;\bfy) \in (\Z/p^h\Z)^{s_1+s_2}: F(\bfx;\bfy) = 0 \}.\]
The standard Hensel-lifting argument, as described in
\cite[Section 17]{Davenport2005}, then shows that $\sigma_p>0$ whenever
there exists a nonsingular $p$-adic solution to the equation
\[F(\bfx;\bfy) = 0.\]

\begin{lemma}\label{lemma: complete singular integral}
    Under the assumption that $K > 2$, the complete singular integral $J$ is absolutely convergent and one has
    \[|J - J(Q;1)| \ll Q^{-1}.\]
\end{lemma}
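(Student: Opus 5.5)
We want to bound $I(\b;1)$ for $|\b|$ large, namely $I(\b;1)\ll \min\{1,|\b|^{-K+\eps}\}$. Since $K>2$, this bound gives absolute convergence and
\[
|J-J(Q;1)|\ll\int_{|\b|>Q}|\b|^{-K+\eps}\d\b\ll Q^{1-K+\eps}\ll Q^{-1}.
\]
The plan follows Birch and Schindler: compare $I(\b;1)$ with the exponential sum $S(\a)$ at a point $\a$ lying on the minor arcs. The pointwise bound of Lemma \ref{lemma:hat sigma pointwise minor arc bound} then transfers to the integral.

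First, by weighted homogeneity and the change of variables in the proof of Lemma \ref{lemma: circlemethod dissection}, we have $I(\b;P)=X^{s_1}Y^{s_2}I(P\b;1)$ with $X=P^{w_1/d}$ and $Y=P^{w_2/d}$. We may assume $|\b|\ge 1$, since otherwise the trivial bound $|I|\ll1$ suffices. Fix such a $\b$ and choose a parameter $P$ together with $\a=\b/P$; the relation between $P$ and $|\b|$ will be fixed below. We compare $S(\a)$ with $I(\a;P)$ using the same mean-value argument as in Lemma \ref{lemma: circlemethod dissection}, here with $q=1$. Replacing each lattice point by its unit cube changes the phase $\a F$ by $\ll |\a|\,P\,P^{-\min\{w_1,w_2\}/d}=|\b|P^{-w_1/d}$. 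This gives
\[
X^{s_1}Y^{s_2}|I(\b;1)|=|I(\a;P)|\le |S(\a)|+O\big(X^{s_1}Y^{s_2}|\b|P^{-w_1/d}\big).
\]

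Next we place $\a$ in the minor arcs. We take $\eta$ with $P^{\eta}\asymp|\b|$; concretely, $|\b|=P^{\eta}$ up to a constant slightly less than $1$, so that $|\a|=P^{\eta-1}\cdot c$ with $c$ small. Then $\a$ lies just outside $\grM(\eta')$ for $\eta'$ slightly smaller than $\eta$. For $q=1$, $a=0$ we have $|q\a-a|=|\a|>P^{\eta'-1}$. For $q\ge2$ and small $\a$, we have $|q\a-a|\ge1-q|\a|$, which is large. Hence $\a\in\grm(\eta')$.

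We need $\eta'\le\eta_*$, so we fix $\eta$ just below $\eta_*$ and let $P$ be determined by $P^{\eta}=|\b|$. Lemma \ref{lemma:hat sigma pointwise minor arc bound} then gives $|S(\a)|\ll X^{s_1}Y^{s_2}P^{-K\eta'+\eps}\ll X^{s_1}Y^{s_2}|\b|^{-K+\eps'}$. The error term is $|\b|P^{-w_1/d}=|\b|^{1-w_1/(d\eta)}$, and this is where the main obstacle lies. The error is only acceptable if $1-w_1/(d\eta)\le -K$. That fails, because $\eta$ is comparable to $\eta_*$, which is at least $w_1/d$ times an integer.

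The fix is to decouple the two parameters. We use a fixed small $\eta$ independent of $\eta_*$, and take $P$ much larger, with $P^{\eta}=|\b|$ and $\eta$ very small. The error term is then $|\b|^{1-w_1/(d\eta)}$, which is as small as desired once $\eta<w_1/(d(K+1))$. The main term stays $|\b|^{-K+\eps}$, because Lemma \ref{lemma:hat sigma pointwise minor arc bound} holds for every $0<\eta\le\eta_*$. So we fix $\eta$ small and let $P=|\b|^{1/\eta}$.

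Combining the two bounds gives $I(\b;1)\ll|\b|^{-K+\eps}$, and integrating as above completes the proof. The only delicate points are the minor-arc membership check near $q=1$ and uniformity of the implied constants in $\b$. Both are routine once $\eta$ is fixed independently of $\b$.
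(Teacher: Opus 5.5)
Your proposal is correct and is essentially the paper's argument. Both fix a small exponent independent of $\b$ and set $\b=P^{\theta}$. Both then compare $S(P^{-1}\b)$ with $X^{s_1}Y^{s_2}I(\b;1)$ via the $q=1$ major-arc approximation, check that $P^{-1}\b\in\grm(\theta')$ for a slightly smaller $\theta'$, and apply the pointwise minor-arc bound.

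The differences are cosmetic. The paper fixes $\theta<1/(5d)$, is content with $I(\b;1)\ll\b^{-2}$, and handles $\b<0$ via $I(-\b;1)=\overline{I(\b;1)}$, a case you should mention explicitly. You instead take $\eta<w_1/(d(K+1))$, which gives the sharper $|\b|^{-K+\eps}$.
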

\begin{proof}
    Since $I(-\b;1)=\overline{I(\b;1)}$, it is enough to consider
    $\b>1$.  Fix $0<\theta<1/(5d)$ and choose $P>1$ so that
    $\b=P^\theta$.  Then
    \[
        P^{-1}\b=P^{\theta-1}
        \in\grM_{0,1}(\theta)=\grN_{0,1}(\theta).
    \]
    The major-arc approximation established in the proof of Lemma
    \ref{lemma: circlemethod dissection} therefore gives
    \begin{equation}\label{eqs: bound on edge of major arcs}
        S(P^{-1}\b) = X^{s_1}Y^{s_2}I(\b;1) + O\left(X^{s_1}Y^{s_2}P^{\theta-\min\{w_1,w_2\}/d} \right).
    \end{equation}
    Choose $\eps>0$ sufficiently small and put
    $\theta'=\theta-\eps/K>0$.  By the definition of the major arcs,
    $\grM(\theta')\subset\grM(\theta)$.  Since the arcs comprising
    $\grM(\theta)$ are disjoint, membership of $P^{-1}\b$ in
    $\grM(\theta')$ would force it to lie in
    $\grM_{0,1}(\theta')$.  This is impossible because
    \[
        P^{-1}\b=P^{\theta-1}>P^{\theta'-1}.
    \]
    Hence $P^{-1}\b\in\grm(\theta')$.  Applying Lemma
    \ref{lemma:hat sigma pointwise minor arc bound} at $\theta'$ gives
    \begin{equation}\label{eqs: bound on edge of minor arcs}
        |S(P^{-1}\b)|
        \ll X^{s_1}Y^{s_2}P^{-K\theta'+\eps}
        =X^{s_1}Y^{s_2}P^{-K\theta+2\eps}.
    \end{equation}
    Since every admissible bidegree satisfies
    $d_1(k)+d_2(k)\geq2$, the definition of $\eta_*$ in Lemma
    \ref{lemma:hat sigma pointwise minor arc bound} gives
    $\eta_*\geq w_1/d$.  Hence
    $\theta'<\theta<1/(5d)<\eta_*$, so Lemma
    \ref{lemma:hat sigma pointwise minor arc bound} is applicable here.
    Combining \eqref{eqs: bound on edge of major arcs} and
    \eqref{eqs: bound on edge of minor arcs}, we obtain
    \[
        I(\b;1)
        \ll P^{\theta-w_1/d}+P^{-K\theta+2\eps}
        =\b^{1-w_1/(d\theta)}
        +\b^{-K+2\eps/\theta}.
    \]
    The choice $\theta<1/(5d)$ gives
    $1-w_1/(d\theta)<1-5w_1\leq-4$.  Since $K>2$, we may take
    $\eps$ sufficiently small that $-K+2\eps/\theta<-2$.  It follows
    that $I(\b;1)\ll\b^{-2}$ for $\b>1$.  Therefore
    \[
        |J-J(Q;1)|
        \ll\int_{|\b|>Q}|I(\b;1)|\,\d\b
        \ll Q^{-1}.
    \]
\end{proof}

Whenever the singular integral is absolutely convergent, the analysis of Br\"{u}dern and
Wooley in the final section of \cite{Wooley2019} shows that $J$ is nonzero
when there exists a nonsingular real solution
$(\bfx_0,\bfy_0)\in[-1,1]^{s_1+s_2}$ to the equation
\[F(\bfx;\bfy) = 0.\]

\section{Deduction of the main theorems}

For $0\leq k\leq m$, put
\[
    r_k:=d_1(k)+d_2(k)-1,
\]
and let
\[
    \ell:=\max\{0\leq k\leq m:d_1(k)>0\}.
\]
The assumption $d>w_1w_2$ ensures that
$d_1(\ell),d_2(\ell)>0$. Define
\[
    \mathcal C(d;w_1,w_2)
    :=\max\left\{2,\frac{d}{w_1r_\ell}\right\}.
\]

We first relate this constant to the quantity $\eta_*$ in Lemma
\ref{lemma:hat sigma pointwise minor arc bound}. Since
\[
    r_{k+1}=r_k-(w_2-w_1),
\]
the sequence $r_k$ is strictly decreasing. Thus the minimum of
$w_1r_k/d$ over the indices for which $d_1(k)>0$ is attained at
$k=\ell$. If $\ell<m$, then $m=\ell+1$, $d_1(m)=0$, and
\[
    d_1(\ell)=w_2,
    \qquad
    d_2(m)=d_2(\ell)+w_1.
\]
Consequently,
\[
    w_2r_m-w_1r_\ell
    =(w_2-w_1)\bigl(d_2(\ell)-1\bigr)\geq0.
\]
It follows from the definition in Lemma
\ref{lemma:hat sigma pointwise minor arc bound} that
\[
    \eta_*=\frac{w_1r_\ell}{d},
    \qquad
    \mathcal C(d;w_1,w_2)=\max\{2,1/\eta_*\}.
\]

We shall also use a simpler upper bound for this constant. Write
\[
    a=d_1(\ell),
    \qquad
    b=d_2(\ell).
\]
Then $a,b\geq1$, $d=w_1a+w_2b$, and $r_\ell=a+b-1$. Since
\[
    (w_1+w_2)(a+b-1)-(w_1a+w_2b)
    =w_2(a-1)+w_1(b-1)\geq0,
\]
we have
\begin{equation}\label{eq:C simple upper bound}
    \mathcal C(d;w_1,w_2)
    \leq 1+\frac{w_2}{w_1}.
\end{equation}
Here we have also used $w_2>w_1$, which gives
$1+w_2/w_1>2$.

\subsection{Proof of Theorem \ref{thm:V1-V2-criterion}}

We first state the sharper form supplied directly by the exponential-sum
estimates.

\begin{theorem}\label{thm:V1-V2-criterion-precise}
Suppose that $d>w_1w_2$ and that, for some $0\leq k\leq m$, one has
\[
    s_1+s_2-\widehat{\sigma}(k)
    >\mathcal C(d;w_1,w_2)\varkappa(k),
\]
where
\[
    \varkappa(k)=
    \begin{cases}
        2^{r_k}r_k,&d_1(k)d_2(k)=0,\\[2mm]
        2^{r_k-1}r_k,&d_1(k)d_2(k)>0.
    \end{cases}
\]
Then, as $P\to\infty$, one has
\[
    R_F(P)
    =\mathfrak S J\,
    P^{(s_1w_1+s_2w_2)/d-1}
    +o\left(P^{(s_1w_1+s_2w_2)/d-1}\right),
\]
where $\mathfrak S$ and $J$ are the singular series and singular integral
associated with $F$. Moreover, $\mathfrak S J>0$ if the equation
$F(\bfx;\bfy)=0$ has a nonsingular solution over $\mathbb Q_p$ for every
prime $p$ and a nonsingular real solution.
\end{theorem}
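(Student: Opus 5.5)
The plan is to assemble the pieces already in place: the hypothesis will be shown to force $K>\max\{2,1/\eta_*\}$, and then Lemma \ref{lemma: minor arc bound}, Lemma \ref{lemma: circlemethod dissection}, and the convergence Lemmas \ref{lemma: complete singular series} and \ref{lemma: complete singular integral} combine to give the asymptotic formula.

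\textbf{Step 1: bounding $K$ from below.} Take $X=P^{w_1/d}$ and $Y=P^{w_2/d}$, so that $X\leq Y$ and $\log X\asymp\log Y\asymp\log P$. Lemma \ref{lemma:hat sigma pointwise minor arc bound} then applies with $K=K(F)$. Note that $\varkappa(k)$ as defined in Theorem \ref{thm:V1-V2-criterion-precise} agrees with the $\varkappa(k)$ of Lemma \ref{lemma:hat sigma weighted Weyl alternative}, since $r_k=D(k)+1$. For the index $k$ in the hypothesis we therefore obtain
\[
K\geq \frac{s_1+s_2-\widehat\sigma(k)}{\varkappa(k)}>\mathcal C(d;w_1,w_2)=\max\{2,1/\eta_*\}.
\]
The final equality is the identification of $\eta_*$ with $w_1r_\ell/d$ made just before the theorem. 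In particular $K>0$.

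\textbf{Step 2: minor arcs and dissection.} Lemma \ref{lemma: minor arc bound} now supplies $\vartheta_0<w_1/(5d)=\min\{w_1,w_2\}/(5d)$ and $\delta>0$ such that
\[
\int_{\grm(\vartheta_0)}|S(\a)|\,\d\a\ll X^{s_1}Y^{s_2}P^{-1-\delta}.
\]
The sum $S(\a)$ of Section \ref{sec:circle method} is the case $\mathcal B_1=[-1,1]^{s_1}$, $\mathcal B_2=[-1,1]^{s_2}$. These boxes have side length $2$, not at most $1$, so the minor arc estimate should be applied after splitting each box into $O(1)$ subboxes of side at most $1$ and using the triangle inequality. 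I expect this to be harmless, but it needs to be checked.

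Since $K>2$, Lemmas \ref{lemma: complete singular series} and \ref{lemma: complete singular integral} give absolute convergence of $\grS$ and $J$. The second assertion of Lemma \ref{lemma: circlemethod dissection} with $\theta=\vartheta_0$ then yields
\[
R_F(P)=\grS J\,X^{s_1}Y^{s_2}P^{-1}+o\bigl(X^{s_1}Y^{s_2}P^{-1}\bigr),
\]
and $X^{s_1}Y^{s_2}P^{-1}=P^{(s_1w_1+s_2w_2)/d-1}$.

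\textbf{Step 3: positivity.} Absolute convergence gives the Euler product $\grS=\prod_p\sigma_p$. The Hensel argument gives $\sigma_p>0$ for each $p$, given a nonsingular $p$-adic point. The bound $|\grS-\grS(Q)|\ll Q^{1-K/2}$ coming from $S(a,q)\ll q^{s_1+s_2-K+\eps}$ with $K>2$ gives $\sum_q|A(q)|<\infty$ and $A(p^h)\ll p^{h(2-K+\eps)}$. Hence $\prod_{p>p_0}\sigma_p\geq 1/2$ for suitable $p_0$, and so $\grS>0$.

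For $J$, a nonsingular real solution can be rescaled by weighted homogeneity, $(\lambda^{w_1}\bfx_0;\lambda^{w_2}\bfy_0)$, into $[-1,1]^{s_1+s_2}$ while remaining nonsingular. The Brüdern--Wooley analysis then gives $J>0$.

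\textbf{Main obstacle.} Nearly everything is bookkeeping. The delicate points are confirming that $\eta_*$ in Lemma \ref{lemma:hat sigma pointwise minor arc bound} equals $w_1r_\ell/d$, including the case $\ell<m$ where a pure $\bfy$-component is present, which was verified above, and the local positivity argument for the product over $p$.
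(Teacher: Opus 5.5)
Your proposal is correct and follows the paper's proof step for step. You show $K(F)>\mathcal C(d;w_1,w_2)=\max\{2,1/\eta_*\}$, feed this into Lemma \ref{lemma: minor arc bound}, the convergence Lemmas \ref{lemma: complete singular series} and \ref{lemma: complete singular integral}, and Lemma \ref{lemma: circlemethod dissection}, and finally rescale a nonsingular real point to get $J>0$. Your extra remarks add care the paper leaves implicit: splitting the side-length-$2$ boxes, and the tail of the Euler product, where $\sum_q|A(q)|<\infty$ alone suffices. Note that the sharper bound is $A(q)\ll q^{1-K+\eps}$; your bound $p^{h(2-K+\eps)}$ would not by itself give convergence over primes when $K\le 3$.
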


\begin{proof}
The hypothesis and the definition of $K(F)$ give
\[
    K(F)>\mathcal C(d;w_1,w_2)
    =\max\{2,1/\eta_*\}.
\]
We may therefore apply Lemma \ref{lemma: minor arc bound}. Thus there
exist $\delta>0$ and $0<\vartheta_0<w_1/(5d)$ such that
\[
    \int_{\grm(\vartheta_0)}|S(\a)|\,\d\a
    \ll X^{s_1}Y^{s_2}P^{-1-\delta}.
\]
Since $K(F)>2$, Lemmas \ref{lemma: complete singular series} and
\ref{lemma: complete singular integral} imply
\[
    \grS(P^{\vartheta_0})=\grS+o(1),
    \qquad
    J(P^{\vartheta_0};1)=J+o(1).
\]
Applying Lemma \ref{lemma: circlemethod dissection} with
$\theta=\vartheta_0$, we obtain
\[
    R_F(P)
    =X^{s_1}Y^{s_2}P^{-1}\bigl(\grS J+o(1)\bigr).
\]
The asserted asymptotic formula follows from
\[
    X^{s_1}Y^{s_2}P^{-1}
    =P^{(s_1w_1+s_2w_2)/d-1}.
\]

Suppose finally that $F=0$ has a nonsingular solution over $\mathbb Q_p$
for every prime $p$ and a nonsingular real solution. Section
\ref{sec:major arcs} gives $\grS>0$. If $(\bfx_0;\bfy_0)$ is a
nonsingular real solution, then weighted homogeneity gives
\[
    F(\lambda^{w_1}\bfx_0;\lambda^{w_2}\bfy_0)=0
    \qquad(\lambda>0).
\]
The $\bfx$- and $\bfy$-partial derivatives are multiplied by
$\lambda^{d-w_1}$ and $\lambda^{d-w_2}$, respectively, so the resulting
solution remains nonsingular. Taking $\lambda$ sufficiently small places
it in $(-1,1)^{s_1+s_2}$. The discussion in Section
\ref{sec:major arcs} then gives $J>0$, whence $\grS J>0$.
\end{proof}

\begin{proof}[Proof of Theorem \ref{thm:V1-V2-criterion}]
The hypothesis of Theorem \ref{thm:V1-V2-criterion}, together with
\eqref{eq:C simple upper bound}, implies the hypothesis of Theorem
\ref{thm:V1-V2-criterion-precise}. The result follows.
\end{proof}

\subsection{Proof of Theorem \ref{thm:component-singular-locus}}

We first deduce a sharper statement from Theorem
\ref{thm:V1-V2-criterion-precise} and Lemma
\ref{lemma:codimension comparison}.

\begin{theorem}\label{thm:component-singular-locus-precise}
Suppose that $d>w_1w_2$ and that, for some $0\leq k\leq m$, one has
\[
    s_1+s_2-\sigma(k)
    >
    \mathcal C(d;w_1,w_2)2^{r_k}r_k.
\]
Then, as $P\to\infty$, one has
\[
    R_F(P)
    =\mathfrak S J\,
    P^{(s_1w_1+s_2w_2)/d-1}
    +o\left(P^{(s_1w_1+s_2w_2)/d-1}\right),
\]
where $\mathfrak S$ and $J$ are the singular series and singular integral
associated with $F$. Moreover, $\mathfrak S J>0$ if the equation
$F(\bfx;\bfy)=0$ has a nonsingular solution over $\mathbb Q_p$ for every
prime $p$ and a nonsingular real solution.
\end{theorem}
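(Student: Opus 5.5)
The plan is to reduce Theorem \ref{thm:component-singular-locus-precise} to Theorem \ref{thm:V1-V2-criterion-precise}. We fix an index $k$ satisfying the hypothesis and show that
\[
s_1+s_2-\widehat{\sigma}(k)>\mathcal C(d;w_1,w_2)\varkappa(k).
\]
The asymptotic formula and the positivity statement then follow verbatim from Theorem \ref{thm:V1-V2-criterion-precise}. The argument is purely geometric, and we split into cases according to whether $d_1(k)d_2(k)>0$.

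Suppose first that $d_1(k),d_2(k)>0$. Write $G=F^{(d_1(k),d_2(k))}$, a bihomogeneous form of bidegree $(d_1(k),d_2(k))$ with both degrees positive. If $G=0$, then $\sigma(k)=s_1+s_2$ and the hypothesis fails, so $G\neq0$. Its total degree is $r_k+1\geq2$, as noted in the proof of Lemma \ref{lemma: complete singular series}.

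The multihomogeneous part of Lemma \ref{lemma:codimension comparison}, applied with $r=2$, gives for $i=1,2$
\[
\codim{\Sing{G}}\leq 2\,\codim{V_i(G)}.
\]
Since $\codim{\Sing{G}}=s_1+s_2-\sigma(k)$, taking the worse of the two indices yields
\[
s_1+s_2-\widehat\sigma(k)\geq\tfrac12\bigl(s_1+s_2-\sigma(k)\bigr)>\tfrac12\,\mathcal C\,2^{r_k}r_k=\mathcal C\,2^{r_k-1}r_k.
\]
The right-hand side equals $\mathcal C\varkappa(k)$ in this case. This factor of $2$ is exactly absorbed by the smaller exponent in $\varkappa(k)$ for mixed bidegrees.

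Now suppose $d_2(k)=0$. Then $G=F^{(d_1(k),0)}$ depends only on $\bfx$. Its gradient in all variables is $(\nabla_{\bfx}G,\mathbf 0)$, so
\[
\Sing{G}=V_1(G)\times\A_{\C}^{s_2},
\]
where $V_1(G)$ is taken inside $\A_{\C}^{s_1}$. Hence $\sigma(k)=s_2+\dim V_1(G)=\widehat\sigma(k)$, and the hypothesis is directly the required inequality with $\varkappa(k)=2^{r_k}r_k$. The case $d_1(k)=0$ is symmetric, giving $\sigma(k)=s_1+\dim V_2(G)=\widehat\sigma(k)$. In these pure cases no appeal to Lemma \ref{lemma:codimension comparison} is needed.

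The only real obstacle is checking that Lemma \ref{lemma:codimension comparison} applies in the mixed case with the correct ambient space. The varieties $V_i(G)$ in \eqref{def:hat sigma} must be taken in $\A_{\C}^{s_1+s_2}$ exactly as in the lemma, so that codimensions are measured in the same space as $\sigma(k)$. The Euler relations in each block of variables supply the partial Euler relation \eqref{eq:partial Euler relation} with $a_0=d_i(k)\neq0$. Once these identifications are confirmed, the theorem follows from Theorem \ref{thm:V1-V2-criterion-precise}.
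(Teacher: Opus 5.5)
Your proposal is correct and is essentially the paper's own proof. In the mixed case you use the multihomogeneous part of Lemma \ref{lemma:codimension comparison} to get $s_1+s_2-\widehat\sigma(k)\geq\frac12\bigl(s_1+s_2-\sigma(k)\bigr)$, so the factor $2$ is absorbed by $\varkappa(k)=2^{r_k-1}r_k$; in the pure case you observe $\sigma(k)=\widehat\sigma(k)$; in both cases you then invoke Theorem \ref{thm:V1-V2-criterion-precise}, and your extra checks ($G\neq0$, total degree at least $2$, $a_0=d_i(k)\neq0$) are valid details that the paper leaves implicit.
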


\begin{proof}
Put
\[
    G=F^{(d_1(k),d_2(k))}.
\]
Suppose first that $d_1(k)d_2(k)>0$. By the definitions of
$\sigma(k)$ and $\widehat\sigma(k)$ and by Lemma
\ref{lemma:codimension comparison},
\[
\begin{aligned}
    s_1+s_2-\widehat\sigma(k)
    &=\min_{i\in\{1,2\}}\codim{V_i(G)},\\
    &\geq\frac12\codim{\Sing{G}},\\
    &=\frac12\bigl(s_1+s_2-\sigma(k)\bigr).
\end{aligned}
\]
The hypothesis therefore gives
\[
    s_1+s_2-\widehat\sigma(k)
    >\mathcal C(d;w_1,w_2)2^{r_k-1}r_k,
\]
which is the hypothesis of Theorem
\ref{thm:V1-V2-criterion-precise} for this index.

If $d_1(k)d_2(k)=0$, then the inactive variables are unrestricted in the
singular locus of $G$. Hence
\[
    \sigma(k)=\widehat\sigma(k),
\]
and the hypothesis is again precisely the corresponding hypothesis in
Theorem \ref{thm:V1-V2-criterion-precise}. The result follows in every
case.
\end{proof}

\begin{proof}[Proof of Theorem \ref{thm:component-singular-locus}]
The result follows immediately from Theorem
\ref{thm:component-singular-locus-precise} and
\eqref{eq:C simple upper bound}.
\end{proof}

\subsection{Proof of Theorem \ref{thm:weighted-singular-locus}}
\label{sec:full singular locus reduction}

We first record the sharper statement obtained from Theorem
\ref{thm:component-singular-locus-precise}.

\begin{theorem}\label{thm:weighted-singular-locus-precise}
Suppose that $d>w_1w_2$ and
\[
    s_1+s_2-\sigma_F
    >
    \mathcal C(d;w_1,w_2)
    \sum_{k=0}^{m}2^{r_k}r_k.
\]
Then, as $P\to\infty$, one has
\[
    R_F(P)=\mathfrak S J\,P^{(s_1w_1+s_2w_2)/d-1}
    +o\left(P^{(s_1w_1+s_2w_2)/d-1}\right).
\]
Moreover, $\mathfrak S J>0$ whenever $F(\bfx;\bfy)=0$ has a nonsingular
real solution and a nonsingular solution over $\Q_p$ for every prime $p$.
In particular, when $\sigma_F=0$, the weighted hypersurface defined by
$F=0$ satisfies the integral Hasse principle under the stated hypotheses.
\end{theorem}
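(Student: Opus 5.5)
The plan is to reduce the statement to Theorem \ref{thm:component-singular-locus-precise} by showing that the hypothesis forces
\[
s_1+s_2-\sigma(k)>\mathcal C(d;w_1,w_2)2^{r_k}r_k
\]
for at least one index $k$. The bridge between the full singular locus and the component singular loci is the following inclusion. Put $G_k=F^{(d_1(k),d_2(k))}$, so that $F=\sum_k G_k$ by \eqref{eqs:decomp}. Then $\nabla F=\sum_k\nabla G_k$, and hence
\[
\bigcap_{0\leq k\leq m}\Sing{G_k}\subseteq\Sing{F}.
\]
Thus $\sigma_F$ is at least the dimension of the intersection on the left.

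Next I bound that dimension from below. Each $\Sing{G_k}$ is a closed affine cone in $\A_{\C}^{s_1+s_2}$: it is cut out by polynomials in $(\bfx;\bfy)$ that are bihomogeneous, so each contains the origin. The affine dimension theorem (intersection in affine space, where every component of the intersection has codimension at most the sum of the codimensions) therefore applies to the nonempty intersection. It gives
\[
\codim{\textstyle\bigcap_k \Sing{G_k}}\leq\sum_{k=0}^m\bigl(s_1+s_2-\sigma(k)\bigr).
\]
Combining this with the inclusion above yields
\[
s_1+s_2-\sigma_F\leq\sum_{k=0}^{m}\bigl(s_1+s_2-\sigma(k)\bigr).
\]
If $s_1+s_2-\sigma(k)\leq\mathcal C(d;w_1,w_2)2^{r_k}r_k$ held for every $k$, summing over $k$ would contradict the hypothesis. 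So some $k$ satisfies the hypothesis of Theorem \ref{thm:component-singular-locus-precise}, and the asymptotic formula and the positivity statement follow from that theorem.

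For the final sentence, suppose $\sigma_F=0$. Then $V$ is quasi-smooth, every solution other than the origin is nonsingular, and so local solvability with nonzero points gives nonsingular local solutions. In that case $\grS J>0$, and the asymptotic formula produces integral points of $F=0$ of arbitrarily large height. This is the integral Hasse principle.

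Finally I would deduce Theorem \ref{thm:weighted-singular-locus} from the precise version. By \eqref{eq:C simple upper bound}, $\mathcal C(d;w_1,w_2)\leq 1+w_2/w_1$, so it remains to show
\[
\sum_{k}2^{r_k}r_k\leq\frac{d}{w_1}2^{d/w_1}.
\]
Since $r_{k+1}=r_k-(w_2-w_1)$ with $w_2-w_1\geq1$, the $r_k$ are distinct positive integers. Also $w_1\bigl(d_1(0)+d_2(0)\bigr)\leq d$, so $r_0\leq d/w_1-1$. Hence the sum is at most
\[
\sum_{r=1}^{R}2^rr\leq 2^{R+1}R, \qquad R=r_0\leq d/w_1-1,
\]
and $2^{R+1}R\leq (d/w_1)2^{d/w_1}$. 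The main point needing care is the dimension-theoretic step, namely confirming that codimension is subadditive for these intersections. That is guaranteed because all the varieties are cones through the origin, so the intersection is nonempty and the affine intersection inequality applies.
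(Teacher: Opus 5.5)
Your proposal is correct and follows the paper's proof essentially step for step. The steps match: the inclusion $\bigcap_k \Sing{G_k}\subseteq\Sing{F}$, the affine dimension theorem giving $s_1+s_2-\sigma_F\leq\sum_k\bigl(s_1+s_2-\sigma(k)\bigr)$, and the resulting contradiction, which reduces the statement to Theorem \ref{thm:component-singular-locus-precise}.

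The one step to phrase more carefully is the dimension step. For reducible varieties, nonemptiness of the intersection alone does not give $\codim{\bigcap_k \Sing{G_k}}\leq\sum_k\codim{\Sing{G_k}}$, and your claim that \emph{every} component of the intersection obeys the bound is false in that setting. The paper handles this by applying the affine dimension theorem to an irreducible component $W_k'$ of $\Sing{G_k}$ of dimension $\sigma(k)$. Each such component is again a cone, so it contains the origin, and the intersection $\bigcap_k W_k'$ is nonempty.
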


\begin{proof}
For $0\leq k\leq m$, put
\[
    C_k:=s_1+s_2-\sigma(k),
    \qquad
    T_k:=\mathcal C(d;w_1,w_2)2^{r_k}r_k,
\]
and let
\[
    W_k:=\Sing{F^{(d_1(k),d_2(k))}}.
\]
Each $W_k$ is a homogeneous affine variety. Choose an irreducible
component $W_k'$ of $W_k$ having dimension $\sigma(k)$. Since $W_k'$ is
an affine cone, it contains the origin. Hence $\bigcap_kW_k'$ is
nonempty, and the affine dimension theorem gives
\[
    \codim{\bigcap_kW_k'}
    \leq
    \sum_k\codim{W_k'}
    =\sum_kC_k.
\]
As $\bigcap_kW_k'\subseteq\bigcap_kW_k$, it follows that
\[
    \codim{\bigcap_kW_k}
    \leq\sum_kC_k.
\]
Moreover, the intersection of the singular loci of the bihomogeneous
components in \eqref{eqs:decomp} is contained in the singular locus of
$F$. Therefore
\[
    s_1+s_2-\sigma_F
    \leq\sum_{k=0}^{m}C_k.
\]
If $C_k\leq T_k$ for every $k$, this inequality would contradict the
hypothesis. Thus $C_{k_0}>T_{k_0}$ for some $0\leq k_0\leq m$, and
Theorem \ref{thm:component-singular-locus-precise} applies.
\end{proof}

\begin{proof}[Proof of Theorem \ref{thm:weighted-singular-locus}]
Put
\[
    e=\left\lfloor\frac d{w_1}\right\rfloor.
\]
The numbers $r_k$ are distinct positive integers. Moreover, the
definition of $d_2(0)$ in Section \ref{section4!} gives
\[
    r_k\leq r_0\leq e-1.
\]
Consequently,
\[
\sum_{k=0}^{m}2^{r_k}r_k \leq\sum_{r=1}^{e-1}2^rr =2^e(e-2)+2 <\frac d{w_1}2^{d/w_1}.
\]
Together with \eqref{eq:C simple upper bound}, this shows that the
hypothesis of Theorem \ref{thm:weighted-singular-locus} implies that of
Theorem \ref{thm:weighted-singular-locus-precise}. The result follows.
\end{proof}

\printbibliography  

\end{document}